\newcommand\kk{\left}
\newcommand\rr{\right}
\newcommand\nm{\nonumber}
\numberwithin{equation}{section}
\newtheorem{proposition}{\textbf{Proposition}}[section]
\newtheorem{definition}{\textbf{Definition}}[section]
\newtheorem{theorem}{\textbf{Theorem}}[section]
\newtheorem{lemma}{\textbf{Lemma}}[section]
\newtheorem{remark}{\textbf{Remark}}[section]
\newcommand\bes{\begin{eqnarray}} \newcommand\ees{\end{eqnarray}}
\newcommand{\bess}{\begin{eqnarray*}}
\newcommand{\eess}{\end{eqnarray*}}
\newcommand\dd{\displaystyle}
\newcommand\ds{{\rm d}s}
\title{Life span of solutions to a semilinear parabolic equation on locally finite graphs\footnote{The first author was supported by NSFC Grant 12201184 and China Postdoctoral Science Foundation Grant 2022M711045, and the second author was supported by NSFC Grant 12171120.}}
\author[a]{Yuanyang Hu}
\author[,\,b]{Mingxin Wang\footnote{Corresponding author. {\sl E-mail}: mxwang@hpu.edu.cn}}
\affil[a]{\small\it School of Mathematics and Statistics, Henan University, Kaifeng, Henan 475004, China}\vspace{2mm}
\affil[b]{\small School of Mathematics and Information Science, Henan Polytechnic University, Jiaozuo 454000, China}
\date{}
\begin{document}
\pagestyle{myheadings}
\maketitle
\vspace{-1cm}

\begin{quote}
\noindent{\bf Abstract.} Let $G=(V,E)$ be a locally finite connected graph. We develop the first eigenvalue method on $G$ introduced in 1963 by Kaplan \cite{Kaplan} on Euclidean space, the discrete Phragm\'{e}n-Lindel\"{o}f principle of parabolic equations and upper and lower solutions method  on $G$.  Using these methods, we establish the estimates and asymptotic behaviour of the life span of solutions to a semilinear heat equation with initial data $\lambda\psi(x)$ for different scales of $\lambda$ on $G$ under some different conditions. Our results are different from the continuous case, which is related to the structure of the graph $G$.

\noindent{\bf Keywords:} Heat equations on graphs; Blow up; Lifespan; Asymptotic behaviour; Maximum principle.

\noindent {\bf AMS subject classifications (2020)}: 35A01, 35K91, 35R02, 58J35.
 \end{quote}

\setlength{\baselineskip}{16pt} \pagestyle{myheadings}
\section{Introduction}
{\setlength\arraycolsep{2pt}

Consider the following Cauchy problem
\begin{equation}\label{1.1}
\begin{cases}
u_t=\Delta_{\mathbb{R}^N}u+u^p\; &\text{in}\;\; \mathbb{R}^N \times(0,T), \\ u(x,0)=\phi(x)\;\; &\text{on}\;\;\mathbb{R}^N,
\end{cases}\end{equation}
where $\Delta_{\mathbb{R}^N}$ is the Laplace operator, $p>1$, $T>0$, $\phi(x)$ is a nonnegative, nontrival, bounded and continuous function on $\mathbb{R}^N$. In the pioneering paper \cite{Fujita}, Fujita proved that the local classical solution of \eqref{1.1} blows up in finite time when $1<p<1+\frac{2}{N}$, while \eqref{1.1} admits a global classical solution when $1+\frac{2}{N}<p$ and $\phi(x)$ is small. Later, the nonexistence of nontrivial global solutions of \eqref{1.1} when $p=1+\frac{2}{N}$ was proved by \cite{Hayakawa, Kobayashi}.

It is well-known that \eqref{1.1} admits a unique nonnegative classical solution $\tilde{u}$, which is local. Define the life span of the solution $\tilde{u}$ as
$$T[\phi]=\sup \left\{T>0 :\,\right. (1.1)\text{ possesses~a~unique~non-negative ~classical~solution in }\left.\mathbb{R}^N \times[0, T)\right\}.$$ Lee-Ni \cite{LeeNi} showed that, if $\liminf_{|x|\rightarrow+\infty}\phi(x)>0$, then
$T[\lambda \phi]<+\infty$ for any $\lambda>0$ and there exist positive constants $C_1$ and $C_2$
so that $C_1 \lambda^{1-p} \leq T[\lambda \phi] \le C_2 \lambda^{1-p}$ for sufficiently small $\lambda>0$.
Furthermore, Gui-Wang \cite{Guiw} proved that $\lim_{\lambda\rightarrow \infty} T[\lambda \phi] \lambda^{p-1}=\frac{1}{p-1}\|\phi\|_{L^\infty(\mathbb{R}^{N})}^{-(p-1)}$, and if
$\lim_{|x|\rightarrow \infty} \phi(x)=\phi_{\infty}>0$, then $\lim_{\lambda \rightarrow 0} T[\lambda \phi] \lambda^{p-1}=\frac{1}{p-1}\phi_{\infty}^{-(p-1)}$.

Let $\mathbb{H}^N$ be the $N$-dimensional hyperbolic space. Bundle-Pozio-Tesei [1] and Wang-Yin \cite{WYn} studied the Cauchy problem
\begin{equation}\label{1.2}
\begin{cases}
 u_t=\Delta_{\mathbb{H}^N} u+h(t) u^p & \text { in } \mathbb{H}^N \times(0, T), \\ u(x,0)=\phi(x) \ge,\not\equiv 0 & \text { on } \mathbb{H}^N,
\end{cases}
\end{equation}
where $\Delta_{\mathbb{H}^N}$ stands for the Laplace-Beltrami operator on $\mathbb{H}^N$, $\phi$ is a  bounded and continuous function on $\mathbb{H}^N$.
They discovered that, in order to produce a "Fujita" phenomenon, it is necessary to take a weight function $h(t)={\rm e}^{\mu t}$ in \eqref{1.2} compared to the problem \eqref{1.1}, where $\mu>0$.
Let $p_H^*=1+\frac{\mu}{\lambda_0}$ with $\lambda_0=\frac{(N-1)^2}{4}$. Then they proved if $1<p<p_H^*$, any nontrivial solution of the problem \eqref{1.2} with $h(t)={\rm e}^{\mu t}$ blows up in finite time, and if $p\ge p_H^*$, the problem \eqref{1.2} admits global solutions for sufficiently small data with $h(t)={\rm e}^{\mu t}$. This is different from what happens in the Euclidean setting addressed by \cite{Fujita, Hayakawa, Kobayashi}.

Later, Wang-Yin \cite{WY} studied the Cauchy problem
\begin{equation}\label{1.3}
\left\{\begin{array}{lll}
u_t=\Delta_{\mathbb{H}^N} u+{\rm e}^{\mu t} u^p\;\; &\text{in}& \mathbb{H}^N \times\left(0, T[\lambda \phi]\right), \\[1mm]
u(x,0)=\lambda\phi(x)  &\text{on}& \mathbb{H}^N,
\end{array}\right.
\end{equation}
where $\lambda$ is a positive parameter, $\phi\not\equiv 0$ is a non-negative bounded and continuous function on $\mathbb{H}^N$, and $T[\lambda \phi]$ is the lifespan of the solution $u$ to the problem \eqref{1.3}. They showed that $\lim _{\lambda \rightarrow \infty} \lambda^{p-1} T[\lambda \phi]=\frac{1}{p-1}\|\phi\|_{L^\infty(\mathbb{H}^{N})}^{-(p-1)}$, if $1<p<p_H^*$, then there exist $C_1, C_2>0$ so that
$$
C_1 \ln \frac{1}{\lambda} \le T[\lambda \phi]\le C_2 \ln \frac{1}{\lambda}\;\; \text { as }\; \lambda \rightarrow 0 \text {, }
$$
and if $p \ge p_H^*$ and  $u_0$ decays more slowly than a natural exponential function at infinity, then $\tilde{C}_1 \ln \frac{1}{\lambda} \leq T_\lambda \leq \tilde{C}_2 \ln \frac{1}{\lambda}$ as $\lambda \rightarrow 0$ for some constants $\tilde{C}_{1}$, $\tilde{C}_{2}>0$. From the conclusions in \cite{WY}, the life span of the solution to the problem on $\mathbb{H}^N$ and the corresponding problem on $\mathbb{R}^N$ (see \cite{Guiw}) have different growth rates as $\lambda \rightarrow 0$.

In recent years, there has been a growing interest in non-existence and existence of global solutions of the Cauchy problem on miscellaneous space, such as graphs (\cite{Len}), metric measure spaces (\cite{FHS}) and manifolds (\cite{RY}).

In order to make our statement more clear, we make some preliminaries firstly. Let $G=(V,E)$ be a graph with the vertex set $V$ and the edge set $E$. For $x,y\in V$, let $xy$ be the edge from $x$ to $y$. We write $y\sim x$ if $xy\in E$. Let $\omega: V \times V\to[0,\infty)$ be an edge weight function satisfying $\omega_{xy}=\omega_{yx}$ for all $x,y \in V$ and $\omega_{xy}>0$ iff $x\sim y$.  For each point $x\in V$, define its degree $${\rm deg}(x)=\#\{y \in V:\, x\sim y\},$$
that is, deg$(x)$ is the number of the edges with endpoint $x$. A graph $G=(V,E)$  is called locally finite if deg$(x)$ is a finite number for each point $x\in V$. Let $\mu: V\to (0,\infty)$ be a positive measure. We also write the graph $G$ as a quadruple $G=G(V,E,\omega,\mu)$. Throughout this paper, unless otherwise stated, we always assume that $G=G(V,E,\omega,\mu)$ is an infinite locally finite connected graph without loops or without multiple edges. Moreover, we require $\omega_{\min}=\inf_{e\in E} \omega_{e}>0$.

\begin{lemma}\label{l1.4}{\rm(\cite[Lemma 1.4]{gri09})} If $G=(V,E)$ is a locally finite connected graph, then the set of vertices $V$ is either finite or countable.
\end{lemma}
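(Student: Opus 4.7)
The plan is to exhaust $V$ by a nested sequence of finite balls around a base point and invoke the fact that a countable union of finite sets is at most countable. Fix any $x_0 \in V$ and let $d(x,y)$ denote the combinatorial graph distance, i.e., the minimum number of edges in a path joining $x$ and $y$ (with the convention $d(x,y)=+\infty$ if no such path exists). For each integer $n \ge 0$ define
\[
B_n := \{x \in V : d(x, x_0) \le n\}.
\]

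The key step is to show by induction on $n$ that each $B_n$ is finite. The base case $B_0 = \{x_0\}$ is immediate. For the inductive step, every $y \in B_{n+1}\setminus B_n$ satisfies $d(y, x_0) = n+1$ and is therefore adjacent to some $x \in B_n$, which yields
\[
B_{n+1} \subseteq B_n \cup \bigcup_{x \in B_n} \{y \in V : y \sim x\}.
\]
Local finiteness ensures that $|\{y \in V : y \sim x\}| = {\rm deg}(x) < \infty$ for each $x \in V$, while the inductive hypothesis gives $|B_n| < \infty$, so $B_{n+1}$ is a finite union of finite sets and is therefore finite.

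Finally, the connectedness of $G$ guarantees that every $x \in V$ is reachable from $x_0$ by a path of finite length, so $d(x,x_0) < \infty$ and $x \in B_{d(x,x_0)}$. Hence
\[
V = \bigcup_{n \ge 0} B_n,
\]
which is a countable union of finite sets, showing that $V$ is at most countable.

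There is no serious obstacle in this argument: the two hypotheses enter cleanly and independently, with local finiteness propagating finiteness of the balls through the induction and connectedness ensuring that these balls cover all of $V$. Both assumptions are in fact necessary, since without connectedness $V$ could be an uncountable disjoint union of isolated vertices, and without local finiteness even $B_1$ could already fail to be finite.
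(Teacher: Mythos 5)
Your proof is correct and is the standard ball-exhaustion argument; the paper itself does not reprove this lemma but simply cites it from Grigoryan's lecture notes, where essentially the same induction on the radius of balls $B_n$ (finiteness via local finiteness, coverage via connectedness) is used.
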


Denote the space of real-valued functions on $V$ by $V^{\mathbb{R}}$.
For any $g\in V^{\mathbb{R}}$, define the integral of $g$ on $V$ by
$\int_V g{\rm d}\mu=\sum_{x\in V}g(x)\mu(x)$. Given a weight and a measure, we define
$$
\mu_{\max }=\sup_{x\in V}\mu(x),\;\;\; D_{\omega}=\frac{\mu_{\max}}{\omega_{\min}},
\;\;\;\mbox{and}\;\;D_\mu=\sup_{x\in V}\frac{m(x)}{\mu(x)},$$
where $m(x)=\sum\limits_{y \in V:\,y\sim x}\omega_{xy},\;\,x\in V$.
Define the distance $d(x,y)$ by the smallest number of edges of a path between two vertices $x$ and $y$. We define balls centered at $x$ with radius $r$: $	 B_x^r=\{y\in V:d(x,y)\le r\}$. The volume of a subset $A\subset V$ is defined as
$\mathcal{V}(A)=\sum_{x\in A}\mu(x)$.
We usually write $\mathcal{V}(B_x^r)$ by $\mathcal{V}(x,r)$.

 Consider the following Cauchy problem
\begin{equation}\label{1.4}
\left\{\begin{array}{lll}
u_t=\Delta u+u^p &\text{ in }& V \times(0,+\infty), \\[1mm]
u(x, 0)=\phi(x)\ge,\not\equiv0\;\; &\text{ on }& V,
\end{array}\right.
\end{equation}
where $\phi$ is a bounded function on $V$ and $\Delta$ is the usual graph Laplace operator on G defined by
\begin{equation}\label{1.5}
    \Delta u(x)=\frac1{\mu(x)}\sum_{y\in V}\omega_{x y}(u(y)-u(x)),~x\in V.
\end{equation}
Under the assumption that $G$ satisfies ${CDE}^{\prime}(n, 0)$ (cf. Definition \ref{d2.2}) and uniform polynomial volume growth of degree $m$, Lin-Wu \cite{LWcv} proved that if $1<p<\frac{2}{m}+1$, then any solution of \eqref{1.4} blows up in finite time, and if $1+\frac{2}{m}<p$, then there exists a nonnegative global solution to \eqref{1.4} for a sufficiently small initial data. And most remarkably, the behaviors of the solutions for the problem \eqref{1.4} strongly depend on $m$ and $p$.

In recent years, increasing efforts have been devoted to the development and analysis of partial differential equations on graphs. Authors of \cite{gliny2, Gekazdan,GeJiaKazdan, KS} studied Kazdan-Warner equations on locally finite graphs. For the counterpart of Yamabe type equations, see \cite{gliny1, PS, zl1Pro} and references therein. For the aspect of reaction diffusion equations on graphs, see \cite{BS, CLC, Len, LZ, Tian} and references cited therein. On the discrete time-dependent Schr\"{o}dinger equations, we recommend the readers to \cite{BK, EJ, FB3, FJ, JLMP}. Bauer et al. \cite{BHLLMY} and Horn et al. \cite{HLLY} established Gaussian estimates for the heat kernel on locally finite graphs. For the other study of the heat kernel on graphs, we refer the readers to see \cite{gri09, KL, AW, RW}.

In this paper, we study the Cauchy problem
\begin{equation}\label{1.6}\left\{\!\begin{array}{ll}
u_t=\Delta u+u^{p}\;\; &\text{in}\;\; V\times(0,T) ,\\[1mm]
u(x,0)=\lambda\psi(x)\;\; &\text{on}\;\; V,
\end{array}\right.
\end{equation}
where $G=G(V,E,\omega,\mu)$ is a locally finite connected weighted graph, $\Delta$ is the usual graph Laplacian defined by \eqref{1.5}, $p>1$ and $\lambda>0$ are parameters, $T>0$, and
\begin{equation}\label{1.7}
	\psi: V\to [0,\infty)\; \text{is a bounded function, but not identically zero}.
\end{equation}

Throughout this paper, for an interval $I\subset \mathbb{R}$ and positive integer $n$, we define
  \bess
  C^n_V(I)&=&\{f: V\times I\to \mathbb{R}:\,f(x,\cdot)\in C^n(I)\; \;{\rm for \; each}\;\, x\in V\},\\
  L^1_V(I)&=&\{f: V\times I\to \mathbb{R}:\,f(x,\cdot)\in L^1(I)\; \;{\rm for \; each}\;\, x\in V\}.
  \eess
For a function $u:\,V\times[0,T)\to\mathbb{R}$, when the term $u_t$ appears, we always think that $u\in C^n_V(0,T)$.

\begin{definition} A function $u=u(x,t;\lambda\psi):\,V\times[0,T)\to\mathbb{R}^+$ is called a solution of \eqref{1.6} in $[0,T)$ if $u$ satisfies \eqref{1.6}, $u$ is continuous with respect to $t\in[0,T']$ for any fixed $x\in V$ and $u\in L^\infty(V\times[0,T'])$ for any $0<T'<T$.
\end{definition}

By upper and lower solutions method (see Theorem \ref{t3.2} below) and Lemma \ref{l3.2}, we see that \eqref{1.6} admits a unique local solution $u(x,t;\lambda\psi)$ in $V\times [0,T_0)$ for some $T_0>0$.

We define the life span (maximum existence time) of the solution $u(x,t;\lambda\psi)$ of \eqref{1.6} by
\begin{equation}\label{1.8}
T_\lambda=T_\lambda(G,\Delta)=\text{sup}\{T>0: u(x,t;{\lambda\psi})~\text{solves}~\eqref{1.6}~\text{in}~[0,T)\}.
\end{equation}
In fact, $T_\lambda $ is the blow up time of the solution $u(x,t;\lambda\psi)$ when $T_\lambda <\infty$.

\begin{theorem}	\label{t2.1} If $D_{\mu}<+\infty$, then there exists $\Lambda>0$ such  that when $\lambda>\Lambda$, $T_{\lambda}<\infty$ and
 \begin{equation*}
	\lim\limits_{\lambda\to \infty}\lambda^{p-1}T_\lambda =\frac1{(p-1)\|\psi\|_{\ell^{\infty}{(V)}}^{p-1}}.
	\end{equation*}
\end{theorem}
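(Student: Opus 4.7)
The plan is to sandwich $T_\lambda$ between the blow-up time of a spatially constant supersolution and the blow-up time of a one-point ODE lower bound; both reduce to the ODE $v'=v^p$ up to a correction controlled by $D_\mu$, and the correction becomes negligible as $\lambda\to\infty$.

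First I would establish the lower bound $\liminf_{\lambda\to\infty}\lambda^{p-1}T_\lambda \ge \frac{1}{(p-1)\|\psi\|_{\ell^\infty(V)}^{p-1}}$. Let $\bar v(t)$ solve $\bar v'=\bar v^p$ with $\bar v(0)=\lambda\|\psi\|_{\ell^\infty(V)}$, so that $\bar v$ blows up exactly at $T^*=\frac{1}{(p-1)(\lambda\|\psi\|_{\ell^\infty(V)})^{p-1}}$. Viewed as a space-independent function on $V$, $\bar v$ satisfies $\partial_t\bar v=\Delta\bar v+\bar v^p$ (since $\Delta\bar v=0$) and $\bar v(x,0)\ge\lambda\psi(x)$ for every $x$, so it is a supersolution of \eqref{1.6}. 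Invoking the comparison principle built into the upper and lower solutions method (Theorem \ref{t3.2}), we get $u(x,t;\lambda\psi)\le\bar v(t)$ on $V\times[0,T_\lambda)$, and therefore $T_\lambda\ge T^*$, which gives the desired $\liminf$.

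Next I would prove the upper bound $\limsup_{\lambda\to\infty}\lambda^{p-1}T_\lambda \le \frac{1}{(p-1)\|\psi\|_{\ell^\infty(V)}^{p-1}}$. Fix $\varepsilon>0$ and pick $x_0\in V$ with $\psi(x_0)\ge(1-\varepsilon)\|\psi\|_{\ell^\infty(V)}$. The hypothesis $D_\mu<+\infty$ enters here: from the definition of $\Delta$ and the non-negativity of $u$,
\begin{equation*}
\Delta u(x_0,t)=\frac1{\mu(x_0)}\sum_{y\sim x_0}\omega_{x_0y}\bigl(u(y,t)-u(x_0,t)\bigr)\ge -\frac{m(x_0)}{\mu(x_0)}u(x_0,t)\ge -D_\mu\,u(x_0,t).
\end{equation*}
Writing $h(t)=u(x_0,t;\lambda\psi)$, this yields the scalar differential inequality $h'(t)\ge h(t)^p-D_\mu h(t)$ with $h(0)=\lambda\psi(x_0)$. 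An ODE comparison with the solution $g$ of $g'=g^p-D_\mu g$, $g(0)=\lambda\psi(x_0)$, gives $h\ge g$ as long as both are defined, and $g$ blows up at
\begin{equation*}
T(\lambda,x_0)=\int_{\lambda\psi(x_0)}^{+\infty}\frac{dz}{z^p-D_\mu z}=\frac{(\lambda\psi(x_0))^{1-p}}{p-1}\bigl(1+o(1)\bigr)\quad\text{as }\lambda\to\infty,
\end{equation*}
the asymptotics coming from the expansion $(z^p-D_\mu z)^{-1}=z^{-p}(1+D_\mu z^{1-p}+\cdots)$. Hence $T_\lambda\le T(\lambda,x_0)$, so $\lambda^{p-1}T_\lambda\le\psi(x_0)^{1-p}/(p-1)+o(1)\le\bigl((1-\varepsilon)\|\psi\|_{\ell^\infty(V)}\bigr)^{1-p}/(p-1)+o(1)$; letting $\varepsilon\to 0$ completes the upper estimate. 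The blow-up assertion $T_\lambda<\infty$ for $\lambda>\Lambda$ is automatic once $\lambda\psi(x_0)$ exceeds the threshold $(2D_\mu)^{1/(p-1)}$ that keeps $z^p-D_\mu z$ positive on the integration range.

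The only real technical point is justifying the comparison $u(x,t)\le\bar v(t)$ on the unbounded graph; this is exactly where the Phragmén–Lindelöf / upper-and-lower-solutions framework developed earlier in the paper does the work, and the $L^\infty$-bound built into the definition of a solution makes the hypotheses of that comparison principle applicable on $[0,T']$ for any $T'<T_\lambda$. The ODE comparison at $x_0$ is elementary. Everything else is the asymptotic expansion of the integral $T(\lambda,x_0)$, which is routine.
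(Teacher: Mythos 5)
Your proposal is correct, and its skeleton is the same sandwich the paper uses: the lower bound via the spatially constant supersolution $\bar v(t)=[(\lambda\|\psi\|_{\ell^\infty(V)})^{-(p-1)}-(p-1)t]^{-1/(p-1)}$ together with the comparison/ordering results (the paper's Lemma \ref{l4.2}) is literally the paper's Step 1 of Lemma \ref{l4.3}. Where you genuinely diverge is the upper bound. The paper runs Kaplan's eigenvalue method (Lemma \ref{l4.1}): it tests the equation against the first Dirichlet eigenfunction on a finite $\Omega$, gets $\eta'\ge \eta^p-\lambda_1(\Omega)\eta$ via self-adjointness and Jensen, and only at the very end specializes $\Omega$ to a single vertex $\tilde x$, at which point $\eta(t)=u(\tilde x,t)$ and $\lambda_1(\Omega)=m(\tilde x)/\mu(\tilde x)\le D_\mu$. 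You reach that same one-point differential inequality $h'\ge h^p-c\,h$ directly from $\Delta u(x_0,t)\ge -D_\mu u(x_0,t)$ and nonnegativity of $u$, skipping the eigenvalue problem, the positivity of the eigenfunction, and Jensen's inequality entirely; your constant $D_\mu$ is slightly cruder than the paper's $m(\tilde x)/\mu(\tilde x)$, but this is irrelevant to the $\lambda\to\infty$ asymptotics, and your asymptotic evaluation of $\int_{\lambda\psi(x_0)}^{\infty}\frac{dz}{z^p-D_\mu z}$ is the same computation the paper does in closed form with the logarithm. What your route buys is economy (no Lemma \ref{l4.1} needed for this theorem, and the blow-up threshold $\Lambda$ comes out explicitly as roughly $(2D_\mu)^{1/(p-1)}/\psi(x_0)$); what the paper's route buys is that the eigenvalue machinery is reused for Theorems \ref{t2.3} and \ref{t2.4}, where a genuinely multi-vertex $\Omega$ is required. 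One small point of care, shared equally by your argument and the paper's: concluding $T_\lambda\ge T^*$ from $u\le\bar v$ uses that $\|u(\cdot,t)\|_{\ell^\infty(V)}$ must blow up as $t\nearrow T_\lambda$ when $T_\lambda<\infty$, which the paper also assumes (it is the hypothesis of Lemma \ref{l4.2}), so this is not a gap relative to the paper's own standard of rigor.
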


Interestingly, compared to the continuous cases proposed by \cite{Guiw} and \cite{WY}, there is no geometric conditions regarding the graph $G$ in Theorem \ref{t2.1}.

Let $\mathbb{Z}^N$ be the $N$-dimensional lattice. A homogeneous tree $T_q$ of degree $q+1$ is defined to be a connected graph with no loops, in which every vertex is adjacent to $q+1$ other vertices. The corresponding graph Laplace operators on $\mathbb{Z}^N$ and $T_q$ can be defined by
 \bess
  \Delta_{\mathbb{Z}^N}u(x)=\frac{1}{N}\!\!\sum_{y\in\mathbb{Z}^N:|y-x|=1}\!\![u(y)-u(x)], \;\;\;{\rm and}\;\;
  \Delta_{T_q}u(x)=\!\!\sum_{y\in T_{q}: d(y, x)=1}\!\!\frac{u(y)-u(x)}{q+1},
 \eess
respectively. Consider the problems
\begin{equation}\label{1.9}
    \begin{cases}u_t=\Delta_{\mathbb{Z}^N}u+u^p, & (x, t) \in \mathbb{Z}^N\times(0, T), \\
    u(x, 0)=\lambda \psi(x), & x \in \mathbb{Z}^N,\end{cases}
\end{equation}
and
\begin{equation}\label{1.10}
\begin{cases}
u_t=\Delta_{T_q}u+u^p,~(x,t)\in T_q\times(0,T),\\
        u(x,0)=\lambda \psi(x),~x\in T_q.
  \end{cases}
\end{equation}
The problem \eqref{1.9} is a spatial-discrete version of \eqref{1.1} with $\phi(x)$ replaced with $\lambda\psi(x)$, whereas \eqref{1.10} is a spatial-discrete version of \eqref{1.2} with $h(t)\equiv 1$ with $\phi(x)$ replaced with $\lambda\psi(x)$. Applying Theorem \ref{t2.1} to \eqref{1.9} and \eqref{1.10}, we get
 \bess\lim_{\lambda\to \infty}\lambda^{p-1}T_{\lambda}(\mathbb{Z}^N) =\frac1{(p-1)\|\psi\|_{\ell^{\infty}{(\mathbb{Z}^N)}}^{p-1}},\;\;\;
\lim_{\lambda\to\infty}\lambda^{p-1}T_{\lambda}(T_q)
 =\frac1{(p-1)\|\psi\|_{\ell^{\infty}{(T_q)}}^{p-1}},\eess
where $T_{\lambda}$ is defined by \eqref{1.8}.
It is interesting to notice that the reaction term is $u^p$ rather than ${\rm e}^{\mu t} u^p$ in \eqref{1.9}. This point is very different from the continuous cases addressed by \cite{Guiw} and \cite{WY}.

\begin{theorem}\label{t2.2}	Assume $D_\mu, D_\omega<\infty$, $\omega_{\min}>0$ and $\inf\limits_{V}\mu>0$. Let $G$ satisfy the condition CDE$\,'(n,0)$ {\rm(}cf. Definition {\rm\ref{d2.2})} and volume growth condition:\vspace{-2mm}
 \begin{enumerate}[leftmargin=4em]
\item[\bf(VG)]\, There exist positive constants $c_0, m, r_0$ such that $\mathcal{V}(x,r)\leq c_0 r^m$ for $x \in V$ and $r \geq r_0$.\vspace{-2mm}
\end{enumerate}
If there exists $\bar x\in V$ such that $\liminf\limits_{d\left(\bar x, x\right) \to\infty}\psi(x)>0$,
then $T_\lambda <\infty$ for any $\lambda>0$.	
\end{theorem}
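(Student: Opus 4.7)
The plan is to apply a discrete version of Kaplan's first-eigenvalue method on an exhausting family of balls. Since $\liminf_{d(\bar x,x)\to\infty}\psi(x)>0$, fix $c_\infty>0$ and $R_0\geq 1$ with $\psi(x)\geq c_\infty$ for every $x\in V$ satisfying $d(\bar x,x)\geq R_0$. For each $r>R_0$, let $\lambda_1(r)>0$ be the first Dirichlet eigenvalue of $-\Delta$ on $B_r=B_{\bar x}^{r}$, and let $\varphi_r\geq 0$ denote the corresponding positive eigenfunction, extended by zero outside $B_r$ and normalized by $\int_V\varphi_r\,{\rm d}\mu=1$. Given $\lambda>0$, I would choose $r$ large and track the Kaplan functional $F(t)=\int_V\varphi_r u(\cdot,t)\,{\rm d}\mu$; because $\int_V\varphi_r\,{\rm d}\mu=1$ implies $F(t)\leq\|u(\cdot,t)\|_{\ell^\infty(V)}$, finite-time blow-up of $F$ forces $T_\lambda<\infty$.

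The derivation of the differential inequality for $F$ rests on the discrete integration-by-parts identity $\int_V\varphi_r\Delta u\,{\rm d}\mu=\int_V u\,\Delta\varphi_r\,{\rm d}\mu$, which is legitimate because $\varphi_r$ has compact support and $\omega_{xy}=\omega_{yx}$. For $x\in B_r$ one has $\Delta\varphi_r(x)=-\lambda_1(r)\varphi_r(x)$, while for $x\notin B_r$, $\Delta\varphi_r(x)=\mu(x)^{-1}\sum_{y\sim x,\,y\in B_r}\omega_{xy}\varphi_r(y)\geq 0$. Since $u\geq 0$, this yields $\int_V\varphi_r\Delta u\,{\rm d}\mu\geq-\lambda_1(r)F(t)$. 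Jensen's inequality against the probability measure $\varphi_r\,{\rm d}\mu$ on $B_r$ gives $\int_V\varphi_r u^p\,{\rm d}\mu\geq F(t)^p$, hence
$$F'(t)\geq-\lambda_1(r)F(t)+F(t)^p.$$
A standard comparison shows that if $F(0)^{p-1}>\lambda_1(r)$, then $F$ is strictly increasing, maintains $F^{p-1}\geq F(0)^{p-1}$, and eventually satisfies $F'\geq\tfrac12 F^p$, so blows up in finite time.

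It remains to verify that $r$ can be taken large enough so $F(0)=\lambda\int_V\varphi_r\psi\,{\rm d}\mu>\lambda_1(r)^{1/(p-1)}$. Splitting the integral and using $\psi\geq c_\infty$ on $V\setminus B_{R_0}$,
$$\int_V\varphi_r\psi\,{\rm d}\mu\geq c_\infty\Bigl(1-\int_{B_{R_0}}\varphi_r\,{\rm d}\mu\Bigr).$$
Under CDE$'(n,0)$ together with (VG), the heat-kernel Gaussian upper bound of Bauer--Horn--Lin--Liu--M\"unch--Yau combined with the representation $\varphi_r={\rm e}^{\lambda_1(r)t}P^{B_r}_{t}\varphi_r$ gives $\|\varphi_r\|_{\ell^\infty(V)}\leq C/\mathcal V(\bar x,r)$ at $t\sim r^2$; meanwhile, monotonicity of Dirichlet eigenvalues and the F\o{}lner-type property implied by polynomial volume growth yield $\lambda_1(r)\to 0$ as $r\to\infty$. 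Because $G$ is infinite, connected, locally finite and $\inf_V\mu>0$, the balls exhaust $V$ and $\mathcal V(\bar x,r)\to\infty$, so $\int_{B_{R_0}}\varphi_r\,{\rm d}\mu\leq\|\varphi_r\|_{\ell^\infty(V)}\mathcal V(B_{R_0})\to 0$. Therefore $F(0)\geq\lambda c_\infty/2$ for $r$ large, while $\lambda_1(r)^{1/(p-1)}\to 0$, closing the required inequality.

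The main obstacle is the pair of quantitative spectral facts $\lambda_1(r)\to 0$ and $\|\varphi_r\|_{\ell^\infty}\to 0$. Both are classical in the Euclidean or Riemannian setting, but on a general locally finite graph they are available only under the geometric hypotheses CDE$'(n,0)$ and (VG) --- the first through a Faber--Krahn/F\o{}lner argument based on polynomial volume growth, the second through the parabolic Harnack inequality and Gaussian heat-kernel estimates that CDE$'(n,0)$ supplies. Once these two bounds are in place, the Kaplan ODE inequality above delivers the finite-time blow-up for every $\lambda>0$, with no smallness requirement on $\lambda$.
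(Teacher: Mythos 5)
Your strategy is genuinely different from the paper's. The paper never touches Dirichlet eigenvalues of balls for this theorem: it fixes a single time $\tau>0$, uses the Gaussian \emph{lower} bound on the heat kernel supplied by CDE$'(n,0)$ (Proposition \ref{p2.3}) together with \textbf{(VG)} to show $\sigma_0(\tau)=\inf_{x\in V}\sum_yP(\tau,x,y)\psi(y)\mu(y)>0$ (Lemma \ref{l4.4}), propagates this to all $t>\tau$ by the semigroup property (Lemma \ref{l4.5}), and then runs Jensen's inequality directly on the Duhamel representation to get $\bigl(\lambda\sum_yP(t,\bar x,y)\psi(y)\mu(y)\bigr)^{1-p}\ge(p-1)t$ (Lemma \ref{l4.6}); combining forces $T_\lambda<\infty$. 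You instead run Kaplan's method on large balls $B_{\bar x}^r$, which is essentially the mechanism the paper reserves for Theorem \ref{t2.3} under the hypothesis \textbf{(EC)}; in effect you are trying to \emph{derive} an (EC)-type statement from CDE$'(n,0)$ and \textbf{(VG)}. The ODE part of your argument (integration by parts against the compactly supported eigenfunction, the sign of $\Delta\varphi_r$ off $B_r^\circ$, Jensen, and the blow-up of $F$ forcing $T_\lambda<\infty$) is sound and mirrors the paper's Lemma \ref{l4.1}.

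The gap is in the two spectral facts you assert. First, ``$\lambda_1(r)\to0$'': the F\o{}lner/test-function computation you allude to gives $\lambda_1(B_{2r})\le CD_\mu\,\mathcal V(\bar x,2r)/\bigl(r^2\mathcal V(\bar x,r)\bigr)$, which under \textbf{(VG)} alone (volume \emph{upper} bound, with only the trivial linear lower bound $\mathcal V(\bar x,r)\ge r\inf_V\mu$) tends to $0$ only when $m<3$; for general $m$ you need either volume doubling (a nontrivial consequence of CDE$'(n,0)$ from Horn--Lin--Liu et al.) or a dyadic pigeonhole argument extracting a subsequence $r_k\to\infty$ with $\mathcal V(\bar x,2r_k)\le C\,\mathcal V(\bar x,r_k)$, and neither is carried out. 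Second, and more seriously, your non-concentration bound for the $L^1$-normalized ground state hinges on the identity $\varphi_r={\rm e}^{\lambda_1(r)t}P^{B_r}_t\varphi_r$ evaluated at $t\sim r^2$: the resulting estimate is $\varphi_r(x)\le {\rm e}^{\lambda_1(r)r^2}\,C_1/\mathcal V(x,r)$, so the whole step is useless unless you first prove the \emph{quantitative} decay $\lambda_1(r)\le C/r^2$ (at least along the subsequence you use). Your stated sources for this --- ``Faber--Krahn'' (which yields a lower bound on $\lambda_1$, not an upper bound) and the ``F\o{}lner-type property'' --- do not deliver it as written. The estimate is obtainable (bounded doubling ratio along a dyadic subsequence plus the tent-function bound gives $\lambda_1(B_{2r_k})\le Cr_k^{-2}$, after which your semigroup argument closes), but as it stands this is the missing load-bearing step, and it is precisely the step the paper circumvents by working with the heat kernel lower bound instead of eigenfunctions.
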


\begin{theorem}\label{t2.3} Let $D_{\mu}<+\infty$, and there exist a vertex $\tilde x\in V$ and $\psi_{\infty}>0$ so that $G$ satisfies the eigenvalue condition:\vspace{-2mm}
\begin{enumerate}[leftmargin=4em]
 \item[\bf(EC)]\, For any given $\varepsilon>0$ and $\delta>0$, there exists a finite connected subset $\Omega\subset V$ satisfying $\lambda_1(\Omega)<\varepsilon$ such that $d\left(x,\tilde x\right)>\delta$ for all $x \in \Omega$,
where $\lambda_1(\Omega)$ is the smallest eigenvalue of the eigenvalue problem \eqref{2.1},\vspace{-1mm}
\end{enumerate} and that $\lim\limits_{d\left(\tilde x, x\right) \to\infty} \psi(x)=\psi_{\infty}$. Then we have the following conclusions:
	\begin{description}\vspace{-2mm}
\item {\rm(1)}\; $T_{\lambda}\le\dd\frac{1}{(p-1)\lambda^{p-1}\psi_{\infty}^{p-1}}, \;\;\forall\; \lambda>0$.\vspace{-2mm}
\item {\rm(2)} If $\psi(x)\le\psi_{\infty}$ for all $x\in V$, then
$\dd\lim_{\lambda\to 0} \lambda^{p-1}T_\lambda =\frac1{(p-1)\psi_{\infty}^{p-1}}$.\vspace{-2mm}
\item{\rm(3)} \; $\dd\frac1{(p-1)\|\psi\|_{\ell^{\infty}(V)}^{p-1}}\le\liminf_{\lambda\to 0}\lambda^{p-1} T_\lambda \le \limsup_{\lambda \to 0} \lambda^{p-1} T_\lambda \le\frac1{(p-1) \psi_{\infty}^{p-1}}$.
\end{description}
\end{theorem}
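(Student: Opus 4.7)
My plan is to handle (1), (3), and (2) in that order, then combine.

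\emph{Part (1) via Kaplan's first-eigenvalue method.} Given small $\eta,\varepsilon>0$, use $\psi(x)\to\psi_\infty$ to pick $\delta$ with $\psi\ge\psi_\infty-\eta$ outside $B_{\tilde x}^{\delta}$, then invoke \textbf{(EC)} to choose a finite connected $\Omega\subset V\setminus B_{\tilde x}^{\delta}$ with $\lambda_1(\Omega)<\varepsilon$. Let $\varphi>0$ be the associated first Dirichlet eigenfunction on $\Omega$, extended by $0$ outside and normalized by $\int_V\varphi\,d\mu=1$. Set $F(t):=\int_V u(\cdot,t;\lambda\psi)\,\varphi\,d\mu$. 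The discrete Green's identity gives
$$\int_V \varphi\,\Delta u\,d\mu=\int_V u\,\Delta\varphi\,d\mu\ge -\lambda_1(\Omega)\,F(t),$$
because $\Delta\varphi=-\lambda_1(\Omega)\varphi$ on $\Omega$, while for $x\notin\Omega$ one has $\Delta\varphi(x)=\frac{1}{\mu(x)}\sum_{y\sim x,\,y\in\Omega}\omega_{xy}\varphi(y)\ge 0$, and $u\ge 0$. Jensen's inequality for the probability measure $\varphi\,d\mu$ then yields
$$F'(t)\ge F(t)^{p}-\lambda_1(\Omega)\,F(t),\qquad F(0)\ge \lambda(\psi_\infty-\eta).$$
Separating variables bounds $T_\lambda$ above by $\int_{F(0)}^{\infty}du/\bigl(u^{p}-\lambda_1(\Omega)\,u\bigr)$, and rescaling $u=F(0)s$ shows this integral tends to $1/[(p-1)(\lambda(\psi_\infty-\eta))^{p-1}]$ as $\varepsilon\to 0$. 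Letting $\eta\to 0$ yields (1).

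\emph{Part (3).} The spatially constant $V(t)$ solving the ODE $V'=V^{p}$ with $V(0)=\lambda\|\psi\|_{\ell^{\infty}(V)}$ is an upper solution of \eqref{1.6}, since $\Delta V\equiv 0$ and $V(0)\ge \lambda\psi(x)$. By the comparison part of the upper-lower solution method (Theorem~\ref{t3.2}), $u(x,t;\lambda\psi)\le V(t)$ wherever $V$ is defined, and therefore
$$T_\lambda\ge \frac{1}{(p-1)\,\lambda^{p-1}\,\|\psi\|_{\ell^{\infty}(V)}^{p-1}}\qquad\text{for every }\lambda>0,$$
which is the lower bound in (3); the upper bound in (3) is immediate from (1).

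\emph{Part (2).} Under $\psi\le \psi_\infty$, the asymptotic $\psi(x)\to \psi_\infty$ forces $\|\psi\|_{\ell^{\infty}(V)}=\psi_\infty$, so (1) and the lower bound of (3) sandwich $\lambda^{p-1}T_\lambda$ at $1/[(p-1)\psi_\infty^{p-1}]$ as $\lambda\to 0$.

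\emph{Main obstacle.} The central difficulty is making the coefficient $\lambda_1(\Omega)$ disappear from the leading constant in the upper bound; this is exactly what hypothesis \textbf{(EC)} is tailored to enable, since it lets one choose $\Omega$ simultaneously far from $\tilde x$ (so $\psi|_\Omega$ is close to $\psi_\infty$) and as thin as desired (so $\lambda_1(\Omega)$ is as small as desired). The secondary technical point is to confirm that the zero-extension of $\varphi$ combined with the discrete Green's formula produces the clean one-sided bound $\int_V\varphi\,\Delta u\,d\mu\ge -\lambda_1(\Omega)F(t)$ with no spurious boundary contribution, which relies on the sign of $\Delta\varphi$ off $\Omega$.
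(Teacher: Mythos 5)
Your proposal is correct and follows essentially the same route as the paper: Kaplan's first-eigenvalue method combined with \textbf{(EC)} (choosing $\Omega$ simultaneously far from $\tilde x$ and with $\lambda_1(\Omega)$ small relative to $\lambda^{p-1}$) for the upper bound in (1), and spatially constant ODE supersolutions for the lower bounds in (2)--(3). The only step to make explicit is that your separation of variables requires $F(0)^{p-1}>\lambda_1(\Omega)$ so that the differential inequality actually forces blow-up; your limit $\varepsilon\to 0$ at fixed $\lambda$ eventually guarantees this, and the paper enforces it directly by requiring $\lambda_1(\Omega)<\varepsilon\lambda^{p-1}(\psi_\infty-\varepsilon)^{p-1}$.
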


\begin{remark} There are many graphs that satisfy the condition {\bf(EC)}, see Examples {\rm 8.1-8.3} in {\rm\cite{BHua}} for example.
\end{remark}
\begin{remark}
Theorem \ref{t2.3} does not require any geometric assumptions about the graph $G$, but requires the eigenvalue condition {\bf(EC)}, which is different from the continuous case raised by \cite{Guiw} and \cite{WY}.
\end{remark}
\begin{theorem}\label{t2.4} Let $G=(V,E)$ be a finite connected graph. Then we have the following conclusions:
\begin{description}
	\item{\rm(1)} There exists $\Lambda=\Lambda(p,\psi,G)>0$ so that if $\lambda>\Lambda$, then $T_{\lambda}<\infty$. Moreover, \begin{equation*}
		\lim_{\lambda\to \infty}\lambda^{p-1}T_\lambda =\frac1{(p-1)\left(\max_V \psi\right)^{p-1}}.
	\end{equation*}
	\item{\rm(2)} If $\min\limits_{V}\psi(x)>0$, then $T_\lambda <\infty$ for $\lambda>0$.
	\item{\rm(3)} If $\min\limits_{V}\psi(x)>0$, then
	\begin{equation*}
		\frac1{(p-1)\big(\max_{V}\psi\big)^{p-1}}\leq \liminf_{\lambda\to 0}\lambda^{p-1} T_\lambda \leq \limsup_{\lambda \to 0}\lambda^{p-1}T_\lambda \leq\frac1{(p-1)\big(\min_{V}\psi\big)^{p-1}}.
	\end{equation*}
\end{description}
\end{theorem}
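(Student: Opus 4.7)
Since $V$ is finite, every vertex has finite degree and strictly positive measure, so $D_\mu=\sup_{x\in V}m(x)/\mu(x)<\infty$ automatically; moreover $\|\psi\|_{\ell^\infty(V)}=\max_V\psi$. Part~(1) is therefore an immediate consequence of Theorem~\ref{t2.1} applied to this $G$, and I would simply quote it without further argument.

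For part~(2) together with the upper bound on $\limsup_{\lambda\to 0}\lambda^{p-1}T_\lambda$ in part~(3), the plan is to build a spatially constant \emph{subsolution}. Let $v(t)$ solve the scalar ODE $v'=v^p$ with $v(0)=\lambda\min_V\psi>0$. Because $\Delta$ annihilates constant functions, $v$ satisfies $v_t-\Delta v-v^p=0$ identically on $V$, and by hypothesis $v(0)\le\lambda\psi(x)=u(x,0)$ for every $x\in V$. Invoking the comparison principle contained in the upper and lower solutions machinery (Theorem~\ref{t3.2}) then gives $u(x,t)\ge v(t)$ on their joint interval of existence, which forces $T_\lambda$ to be at most the blow-up time $\frac{1}{(p-1)\lambda^{p-1}(\min_V\psi)^{p-1}}$ of $v$. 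This proves part~(2) and, after multiplying through by $\lambda^{p-1}$, yields the $\limsup$ bound in part~(3).

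For the remaining lower bound in part~(3), the mirror argument uses a spatially constant \emph{supersolution} $w(t)$ solving $w'=w^p$ with $w(0)=\lambda\max_V\psi$. Comparison via Theorem~\ref{t3.2} gives $u(x,t)\le w(t)$ on $V\times[0,T^+)$, where $T^+=\frac{1}{(p-1)\lambda^{p-1}(\max_V\psi)^{p-1}}$ is the blow-up time of $w$. Since $u$ then stays bounded on $V\times[0,T^+)$, the standard continuation criterion built into the local theory (the solution persists as long as it remains in $L^\infty$) forces $T_\lambda\ge T^+$; multiplying by $\lambda^{p-1}$ and taking $\liminf$ gives the desired lower bound.

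I do not expect a serious obstacle here: the finiteness of $V$ dispenses with all of the delicate geometric conditions (volume growth, eigenvalue condition, $D_\omega<\infty$, etc.)\ that appear in Theorems~\ref{t2.2}--\ref{t2.3}, because spatially constant functions are automatically harmonic on any graph and the extrema of $\psi$ are attained. The only points requiring care are to cite Theorem~\ref{t3.2} in exactly the form that permits comparison of smooth subsolutions/supersolutions of \eqref{1.6} with the solution $u$, and to verify that the local existence/blow-up alternative really is phrased in terms of an $L^\infty$ bound so that boundedness of $u$ propagates past $T^+$.
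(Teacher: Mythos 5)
The weak point is part (1). You cannot simply quote Theorem \ref{t2.1} here: the paper's standing convention is that $G$ is an \emph{infinite} locally finite graph, and every ingredient of that theorem's proof is built on this assumption --- the comparison principle of Lemma \ref{l3.2} is proved by constructing an infinite chain of distinct vertices, the upper--lower solution machinery of Subsection 3.2 is explicitly restricted to infinite graphs, and the Kaplan-type argument of Lemma \ref{l4.1} requires a finite subset $\Omega$ whose Dirichlet boundary $\partial\Omega$ is nonempty, so that the eigenvalue problem \eqref{2.1} makes sense with $\lambda_1(\Omega)>0$ (the quantities $\lambda_1^{1/(p-1)}$ and $\tfrac{1}{\lambda_1(p-1)}\ln(1-\lambda_1/\eta^{p-1}(0))$ in Lemma \ref{l4.3} are meaningless when $\lambda_1=0$). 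On a finite graph the natural choice $\Omega=V$ has empty boundary, and one cannot in general find a proper subset whose interior meets the support of $\psi$ (think of a star graph with $\psi$ supported at the centre). This is exactly why the paper gives a separate proof of part (1): it adjoins an auxiliary vertex $z$ with an edge $\tilde x\sim z$, poses the Dirichlet eigenvalue problem \eqref{5.1} on the augmented graph with $\phi(z)=0$, verifies $\Delta u\ge\Delta_{V_1}u$ on $V$, and reruns the Kaplan argument; the limit $\lim_{\lambda\to\infty}\lambda^{p-1}T_\lambda$ is then re-derived in Lemma \ref{l5.3} using the finite-graph comparison principle Lemma \ref{l5.2}, which rests on Lemma 2.3 of \cite{Tian} rather than on Lemma \ref{l3.2}. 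Some such argument has to be supplied; ``quote Theorem \ref{t2.1}'' leaves a genuine hole.

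Parts (2) and (3) are essentially correct, with one route genuinely different from the paper's. Your single constant-in-space subsolution $v'=v^p$, $v(0)=\lambda\min_V\psi$, is precisely the function $S_2$ in the paper's Lemma \ref{l5.5}, and since it yields $T_\lambda\le\frac{1}{(p-1)\lambda^{p-1}(\min_V\psi)^{p-1}}<\infty$ it does subsume part (2); the paper instead proves part (2) separately (Lemma \ref{l5.4}) via the heat-kernel representation and Jensen's inequality in the style of Lemma \ref{l4.6}, so your route is simpler and equally valid. The lower bound via the constant supersolution matches $S_1$ in Lemma \ref{l5.5}. Two corrections of detail: the comparison result you should invoke is Lemma \ref{l5.2} (the finite-graph analogue of Lemma \ref{l4.2}), not Theorem \ref{t3.2}/Lemma \ref{l3.2}, for the same infinite-graph reason as above; and the passage from ``$u\ge v$ and $v$ blows up at $t_2$'' to ``$T_\lambda\le t_2$'' uses the hypothesis $\lim_{t\nearrow T_\lambda}\|u(\cdot,t)\|_{\ell^\infty(V)}=\infty$ built into Lemma \ref{l5.2}, i.e., the continuation criterion you flag at the end really is needed and should be checked against the paper's definition of a solution (boundedness on $V\times[0,T']$ for every $T'<T_\lambda$).
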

\begin{remark}
Comparing Theorem \ref{t2.3} and Theorem \ref{t2.4}, we find that the number of $V$ has a significant impact on the asymptotic behavior of $T_{\lambda}$.
\end{remark}

Yamauchi (\cite[Theorems~1]{Ya2011}) proved that if $N=1$ and $$\max\left\lbrace\liminf\limits_{x\to +\infty}\phi(x),\;\liminf\limits_{x\to-\infty}\phi(x)\right\rbrace>0,$$ then the
classical solution of \eqref{1.1} blows up in finite time, and the blow-up time is estimated as
\bess
T[\phi]\le\frac{1}{p-1}\left(\max\left\lbrace\liminf\limits_{x\to+\infty}
\phi(x),\;\liminf\limits_{x\to-\infty}\phi(x)\right\rbrace\right)^{1-p}.
    \eess
Yamauchi' ideas are not effective for problem \eqref{1.9}. We shall use a completely different approach to obtain a similar result as follows.

\begin{theorem}\label{t1.6}
Suppose $N=1$, $D_{\mu}<\infty$, $\lambda=1$ and $\alpha>0$. If
   $$\max\left\lbrace\liminf\limits_{\mathbb{Z}\ni x\to +\infty}\psi(x),\;\; \liminf\limits_{\mathbb{Z}\ni x\to-\infty}\psi(x)\right\rbrace >0,$$ 
then the solution of \eqref{1.9} blows up in finite time, and the blow-up time is estimated as
    \begin{equation}
T_1\left(\mathbb{Z},\Delta_{\mathbb{Z}}\right)\le\frac{1}{p-1}\left(\max\left\lbrace \liminf\limits_{\mathbb{Z}\ni x\to +\infty}\psi(x),\;\;\liminf\limits_{\mathbb{Z}\ni x\to-\infty}\psi(x)\right\rbrace\right)^{1-p}.
    \label{1.11}\end{equation}
\end{theorem}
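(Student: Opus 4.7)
The plan is to adapt the first eigenvalue method of the paper to a one-sided setting, so that only one of the two liminfs is needed. Without loss of generality assume $M := \liminf_{\mathbb{Z}\ni x\to+\infty}\psi(x)>0$ realises the maximum in the hypothesis (the other case is symmetric, handled by reflecting the interval below). Given the growing interval $\{R, R+1,\ldots, R+k\}$, the point is that its first Dirichlet eigenvalue tends to $0$ but the interval still lies in the region where $\psi$ exceeds $M-\varepsilon$.

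Fix $\varepsilon\in(0,M)$ and pick $R=R_\varepsilon\in\mathbb{Z}$ such that $\psi(x)\ge M-\varepsilon$ for every $x\ge R$. For each integer $k\ge 1$, set $\Omega_k:=\{R,R+1,\ldots,R+k\}$ and let $\phi_k$ be the positive first Dirichlet eigenfunction of $-\Delta_{\mathbb{Z}}$ on $\Omega_k$ with eigenvalue $\lambda_1(\Omega_k)$, extended by zero outside and normalised so that $\sum_{x\in\Omega_k}\phi_k(x)\mu(x)=1$. A standard Sturm--Liouville calculation (or the Rayleigh quotient applied to the sine-type test function) gives $\lambda_1(\Omega_k)\to 0$ as $k\to\infty$, so a suitable analogue of the eigenvalue condition \textbf{(EC)} is available uniformly on intervals pushed toward $+\infty$.

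Define the weighted mass $J_k(t):=\sum_{x\in\Omega_k}u(x,t)\phi_k(x)\mu(x)$. Since $u\ge 0$ on $\mathbb{Z}$, since $\phi_k\ge 0$ with compact support in $\Omega_k$, and since the Laplacian is symmetric with respect to the $\mu$-inner product, summation by parts yields
\begin{equation*}
\sum_{x\in\Omega_k}\Delta_{\mathbb{Z}} u(x,t)\,\phi_k(x)\mu(x)
\;=\;-\lambda_1(\Omega_k)\,J_k(t)+\sum_{x\notin\Omega_k} u(x,t)\,\Delta_{\mathbb{Z}}\phi_k(x)\mu(x)
\;\ge\; -\lambda_1(\Omega_k)\,J_k(t),
\end{equation*}
because $\Delta_{\mathbb{Z}}\phi_k(x)\ge 0$ whenever $x\notin\Omega_k$ (the sum there reduces to $\omega_{xy}\phi_k(y)/\mu(x)\ge 0$). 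Multiplying \eqref{1.9} (with $\lambda=1$) by $\phi_k(x)\mu(x)$, summing in $x$, and applying Jensen's inequality to the probability measure $\phi_k\mu$ on $\Omega_k$ (so $\sum u^p\phi_k\mu\ge J_k^p$) gives the Bernoulli-type differential inequality
\begin{equation*}
J_k'(t)\;\ge\;-\lambda_1(\Omega_k)\,J_k(t)+J_k(t)^p,\qquad J_k(0)\ge M-\varepsilon.
\end{equation*}

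Once $k$ is large enough that $\lambda_1(\Omega_k)^{1/(p-1)}<M-\varepsilon$, the right-hand side is strictly positive at $t=0$, so $J_k$ is non-decreasing; integrating the scalar ODE $y'=-\lambda_1(\Omega_k)y+y^p$ with $y(0)=M-\varepsilon$ produces a finite blow-up time $T_k^{*}$ with $T_k^{*}\to \frac{1}{(p-1)(M-\varepsilon)^{p-1}}$ as $\lambda_1(\Omega_k)\to 0$. Since $u$ cannot exist past time $T_k^{*}$, we have $T_1(\mathbb{Z},\Delta_{\mathbb{Z}})\le T_k^{*}$; letting $k\to\infty$ and then $\varepsilon\to 0^{+}$ delivers the bound \eqref{1.11}. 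The main obstacle is the summation-by-parts step: because $u$ does not vanish outside $\Omega_k$ while $\phi_k$ does, one has to check carefully that the resulting boundary-type term $\sum_{x\notin\Omega_k} u\,\Delta_{\mathbb{Z}}\phi_k\,\mu$ has the favourable sign and can be discarded; all subsequent steps reduce to routine scalar ODE comparison together with the familiar decay $\lambda_1(\Omega_k)=O(k^{-2})$.
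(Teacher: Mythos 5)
Your proof is correct, but it takes a genuinely different route from the paper's. The paper proves Theorem \ref{t1.6} through Lemma \ref{l6.1}: it introduces the asymptotic upper density $\bar{D}(\beta)$ of the superlevel set $\{\psi\ge\beta\}$, shows $\sup_{z}\sum_{y}P(T_1,z,y)\psi(y)\ge\beta\bar{D}(\beta)$ using the translation invariance of the heat kernel on $\mathbb{Z}$ (condition \eqref{6.1}, via \cite{CY2}) together with the volume asymptotics \eqref{6.2}, and then concludes from the integral-equation bound of Lemma \ref{l4.6}. You instead run Kaplan's first-eigenvalue method on intervals $\Omega_k=\{R,\dots,R+k\}$ pushed toward $+\infty$, using $\lambda_1(\Omega_k)=O(k^{-2})\to 0$; in effect you verify a one-sided analogue of condition \textbf{(EC)} for $\mathbb{Z}$ and rerun the argument of Lemmas \ref{l4.1} and \ref{l4.7} with $\psi_\infty$ replaced by the one-sided liminf. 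Your route is more elementary --- it needs no heat-kernel estimates, no transitivity of the kernel, and no volume asymptotics --- and it generalizes immediately to any graph containing connected finite subsets of arbitrarily small first Dirichlet eigenvalue inside $\{\psi\ge M-\varepsilon\}$. What the paper's Lemma \ref{l6.1} buys in exchange is a quantitatively stronger conclusion when the superlevel set is merely non-rarefied, i.e.\ $\bar{D}(\beta)\in(0,1)$, a regime the eigenvalue method cannot reach; for $N=1$ with a one-sided positive liminf one has $\bar{D}=1$ and the two bounds coincide. One small bookkeeping point: your displayed summation-by-parts identity omits the contribution of $\partial\Omega_k$, where $\phi_k=0$ but $\Delta_{\mathbb{Z}}\phi_k(x)=\frac{1}{\mu(x)}\sum_{y\sim x}\omega_{xy}\phi_k(y)\ge 0$; this term has the same favourable sign as the one you retain, so the inequality $\sum_{x\in\Omega_k}\Delta_{\mathbb{Z}}u\,\phi_k\mu\ge-\lambda_1(\Omega_k)J_k$ stands and the rest of the argument goes through.
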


This paper is organized as follows. In Section 2, we introduce some notations, concepts and known results that will be used frequently in the following sections. In Section 3, we give auxiliary results, including the discrete Phragm\'{e}n-Lindel\"{o}f principle of parabolic equations and upper and lower solutions method. Section 4 is devoted to the proofs of Theorems \ref{t2.1}-\ref{t2.3}. In Sections 5 and 6, we prove Theorems \ref{t2.4} and \ref{t1.6}, respectively.

\section{Preliminaries}

\subsection{The Laplacian on graphs}

Define the set of $\ell^p$ integrable functions on $V$ with respect to
the measure $\mu$ by
$$\ell^p(V,\mu)=\left\{f\in V^{\mathbb{R}}:\sum_{x \in V} \mu(x)|f(x)|^p<\infty\right\}, \;\; 1\leq p<\infty,$$
with the norm
 $$\|f\|_{\ell^p(V,\mu)}=\left(\sum_{x \in V} \mu(x)|f(x)|^p\right)^{1/p},\;\;\;f\in \ell^p(V,\mu).$$
For $p=\infty$, we define
  $$\ell^{\infty}{(V)}=\left\{f\in V^{\mathbb{R}}:\sup_{x\in V}|f(x)|<\infty\right\},$$
with the norm
$$\|f\|_{\ell^{\infty}{(V)}}=\sup_{x\in V} |f(x)|,\;\;\;f\in \ell^{\infty}{(V)}.$$

For any $H\in {V}^{\mathbb{R}}$, the graph Laplacian $\Delta$ on $G$ is defined by
\begin{equation*}
	\Delta H(x)=\frac1{\mu(x)}\sum_{y\in V:\, y\sim x}\omega_{x y}(H(y)-H(x)),~x\in V.
\end{equation*}
It follows from \cite{HAESELER} that
$D_\mu<\infty$ if and only if $\Delta$ is bounded on $\ell^p(V,\mu)$ for all $p\in [1,\infty]$. For $f,g\in V^{\mathbb{R}}$, the gradient form of $f$ and $g$ reads
\begin{equation*}
	\Gamma(f,g)(x)=\frac1{2\mu(x)}\sum_{y\in V:\,y\sim x}{\omega}_{x y}(f(y)-f(x))(g(y)-g(x)),
\end{equation*}
the iterated gradient form $\Gamma_2$ is defined by
\begin{equation*}
	 \Gamma_2(f,g)(x)=\frac12(\Delta \Gamma(f,g)-\Gamma(f,\Delta g)-\Gamma(\Delta f,g))(x).
\end{equation*}
We will write $\Gamma(f)=\Gamma(f,f)$ and $\Gamma_2(f)=\Gamma_2(f,f)$.

In this paper, we always assume that $\Omega$ is a finite connected subset of $V$. We define the boundary of $\Omega$ by
\begin{equation*}
\partial\Omega=\{x\in \Omega: \text{there exists}\, y\in \Omega^{c}~\text{such that}~y\sim x\},
\end{equation*}
and the interior of $\Omega$ by $\Omega^\circ=\Omega\setminus\partial\Omega$ is nonempty. For any $f \in {\ell^2}(\Omega; \mu)$, we can extend it to a function $\tilde{f} \in {\ell^2}(V ; \mu)$ by
\begin{equation*}
	\tilde{f}(x)=\left\{\!\!\begin{array}{ll}
		f(x), \;\;& x \in \Omega^\circ,\\[1mm]
		0, & x \notin \Omega^\circ.
	\end{array}\right.
  \end{equation*}
We define the Dirichlet Laplace operator $\Delta_{\Omega}: {\ell^2}(\Omega; \mu)\to {\ell^2}(\Omega; \mu)$ by $\Delta_{\Omega} f=(\Delta\tilde{f})\big|_{\Omega^\circ}$, i.e.,
 \begin{equation*}
	\Delta_{\Omega}f(x)=\frac{1}{\mu(x)}\sum_{y\in V}(\tilde{f}(y)-\tilde{f}(x))\omega_{y x}=\frac{1}{\mu(x)}\sum_{y\in V}(\tilde{f}(y)-f(x))\omega_{y x},\quad x\in \Omega^\circ.
	\end{equation*}
By \cite{CLC, gri09}, the eigenvalue problem
\begin{equation}\label{2.1}
	\left\{\!\!\begin{array}{ll}
	-\Delta_{\Omega} \phi=\lambda \phi,\;\;&x\in\Omega\setminus\partial\Omega,\\[1mm]
	\phi =0,\;\;&x\in\partial\Omega
	\end{array}\right.
\end{equation}
admits a sequence of eigenvalues $0<\lambda_1(\Omega)\leq\lambda_2(\Omega)\leq\cdots\leq\lambda_N(\Omega)$, where $N=\#\Omega^{\circ}$. Moreover the eigenfunction $\phi(x)$ corresponding to $\lambda_1(\Omega)$ could be chosen such that $\sum_{x\in\Omega}\phi(x)\mu(x)=1$ and $\phi(x)>0$ in $\Omega$.

\subsection{Curvature dimension condition and the heat kernel on graphs}\label{s2.2}

For a function $P:(0,\infty)\times V \times V\to \mathbb{R}$ and all bounded $u_0 \in V^{\mathbb{R}}$, if
  $$u(x,t)=\sum_{y\in V}P(t,x,y)u_0(y){\mu(y)},\quad x\in V,\;\;t>0$$
is differentiable in $t$, satisfies
\begin{equation}\label{2.2}
	u_t=\Delta u~\;\;\;\text{on}\;~G=(V,E),
\end{equation}
 and for all $x\in V$ and $t>0$,
$\lim\limits_{t\to0^+}u(x,t)=u_0(x)$ holds, we say that $P$ is a fundamental solution of \eqref{2.2}, i.e., $P(t,x,y)$ is the heat kernel on $G$.

Let us recall the curvature dimension conditions introduced in \cite{HLLY}.

\begin{definition}
Let $x\in V$, $n>0$ and  $K\in\mathbb{R}$. We call that a graph $G$ satisfies the exponential curvature dimension inequality $CDE(x,n,K)$ at the vertex $x$, if for any function $f:V\to (0,+\infty)$ satisfying $\Delta f(x)<0$, there holds:
 $$\Gamma_2(f)(x)-\Gamma\kk(f,\frac{\Gamma(f)}{f}\rr)(x)\geq\frac1{n}[(\Delta f)(x)]^2+K\Gamma(f)(x).$$
We say that $CDE(n,K)$ is satisfied if $CDE(x,n,K)$ is satisfied for all $x\in V$.
\end{definition}

\begin{definition}\label{d2.2} Let $x\in V$, $n>0$ and  $K\in\mathbb{R}$. A graph $G$ satisfies the exponential curvature dimension inequality $CDE'(x,n,K)$, if for any function $f:V\to \mathbb{R}^+$, we have
   $$\Gamma_2(f)(x)-\Gamma\kk(f,\frac{\Gamma(f)}{f}\rr)(x)\geq
   \frac1{n}[f(x)(\Delta \log f)(x)]^2+K\Gamma(f)(x).$$
We say that $CDE'(n,K)$ is satisfied if $CDE'(x,n,K)$ is satisfied for all $x\in V$.
\end{definition}

For convenience, we summarize some important properties of the heat kernel $P(t,x,y)$ on $G$ in the following propositions.

\begin{proposition}[\cite{AW,RW}]\label{p2.2} For $t,s>0$ and any $x,y\in V$, we have

{\rm(i)}~$P_t(t,x,y)=\Delta_xP(t,x,y)=\Delta_yP(t,x,y)$,\vspace{1mm}

{\rm(ii)}~$P(t,x,y)> 0$,\vspace{1mm}

{\rm(iii)}~$P(t,x,y)=P(t,y,x)$,\vspace{1mm}

{\rm(iv)}~$\sum_{y\in V}P(t,x,y)\mu(y)\leq 1$,\vspace{1mm}

{\rm(v)}~$\sum_{z\in V}P(t,x,z)P(s,z,y)\mu(z)=P(t+s,x,y)$.
\end{proposition}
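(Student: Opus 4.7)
The plan is to construct $P(t,x,y)$ as the monotone limit of Dirichlet heat kernels on an exhausting sequence of finite subgraphs and then deduce each of (i)--(v) from its finite-graph analogue by passing to the limit. First I would fix an exhaustion $\Omega_1\subset\Omega_2\subset\cdots\subset V$ of $V$ by finite connected subsets with $\bigcup_n \Omega_n^\circ=V$. Using the eigenpairs $\{(\lambda_k^{(n)},\phi_k^{(n)})\}_{k=1}^{N_n}$ of $-\Delta_{\Omega_n}$ supplied by \eqref{2.1}, I set
$$
P_n(t,x,y)=\sum_{k=1}^{N_n}e^{-\lambda_k^{(n)}t}\phi_k^{(n)}(x)\phi_k^{(n)}(y),
$$
extended by zero whenever $x$ or $y$ lies outside $\Omega_n^\circ$. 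The finite spectral formula immediately yields the finite-graph analogues of (i), (iii) and (v): term-by-term differentiation gives $\partial_t P_n=\Delta_x P_n=\Delta_y P_n$, the formula is manifestly symmetric in $(x,y)$, and $\ell^2(\Omega_n,\mu)$-orthonormality of the $\phi_k^{(n)}$ gives the semigroup identity on $\Omega_n$. Positivity of $P_n$ on $\Omega_n^\circ\times\Omega_n^\circ$ follows from the strong minimum principle for the heat equation on a finite graph applied to the initial datum $z\mapsto\delta_{zy}/\mu(y)$, and comparison with the constant supersolution $v\equiv 1$ gives $\sum_y P_n(t,x,y)\mu(y)\le 1$.

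Second, I would promote $P_n$ to $P$ by monotonicity. The difference $P_{n+1}-P_n$ satisfies the heat equation on $\Omega_{n+1}^\circ$ with zero initial data and boundary values $P_{n+1}\ge 0=P_n$ on $\partial\Omega_n$, so the finite-graph minimum principle yields $P_{n+1}\ge P_n$. The uniform bound from the finite-graph version of (iv) then allows me to define $P(t,x,y):=\lim_{n\to\infty}P_n(t,x,y)$, a monotone limit bounded above by $1/\mu(y)$ (and by symmetry $1/\mu(x)$). Properties (ii) and (iii) transfer to $P$ immediately; for (iv), monotone convergence in $y$ preserves the inequality; and for the semigroup identity (v), monotone convergence in $z\in V$ legitimates interchanging $\sum_z$ with $\lim_n$ in the finite-graph identity.

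The remaining step, (i), is the delicate one. Since $G$ is locally finite, $\Delta_x P_n(t,x,y)$ is a finite linear combination of values of $P_n(t,\cdot,y)$ at $x$ and its neighbours, so $\Delta_x P_n(t,x,y)\to\Delta_x P(t,x,y)$ pointwise. To obtain $\partial_t P=\Delta_x P$ I would integrate the finite-graph identity in time,
$$
P_n(t,x,y)-\delta_{xy}/\mu(y)=\int_0^t \Delta_x P_n(s,x,y)\,\ds,
$$
and pass to the limit under the integral via dominated convergence, using the uniform pointwise bound $P_n\le 1/\mu(\cdot)$ supplied by (iv)$_n$ to control $|\Delta_x P_n|$ on any bounded time interval through the finitely many neighbours of $x$. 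Differentiating in $t$ then yields (i), with $\Delta_xP=\Delta_yP$ following from (iii). The main obstacle is precisely this combined interchange of pointwise limit, time derivative, and (for (v)) infinite sum; it is resolved by combining the monotone convergence of step two with the local boundedness of $\Delta$ at each fixed vertex.
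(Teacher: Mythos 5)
The paper supplies no proof of Proposition \ref{p2.2}: it is imported verbatim from \cite{AW,RW}. Your route --- Dirichlet kernels $P_n$ on an exhaustion built from the spectral data of \eqref{2.1}, monotonicity $P_n\le P_{n+1}$ via the finite-graph comparison principle, passage to the increasing limit, and the time-integrated identity to recover (i) --- is precisely the construction carried out in those references, so you have reconstructed the source's argument rather than found a genuinely different one. All five properties do follow along the lines you describe: the spectral formula gives the finite analogues of (i), (iii), (v); the minimum principle gives positivity and the sub-Markov bound; and monotone convergence handles (ii)--(v) in the limit, with your dominated-convergence treatment of the integrated equation (using $P_n\le 1/\mu(y)$ and local finiteness to bound $|\Delta_x P_n|$ by $2m(x)/(\mu(x)\mu(y))$) being the correct mechanism for (i).

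Three points need tightening. First, $\Omega_n$ connected does not make $\Omega_n^\circ$ connected, and the strong minimum principle only yields $P_n(t,x,y)>0$ for $x,y$ in a common component of $\Omega_n^\circ$; take $\Omega_n=B_{x_0}^n$ and note that any fixed path from $x$ to $y$ lies in $\Omega_n^\circ$ for large $n$, which gives (ii) in the limit (the monotonicity step likewise wants $\Omega_n\subset\Omega_{n+1}^\circ$). Second, before you ``differentiate in $t$'' you must know $s\mapsto\Delta_xP(s,x,y)$ is continuous --- a monotone limit of smooth functions need not be --- but your own integral identity repairs this: the uniform bound on $|\Delta_xP_n|$ survives the limit, so $P(\cdot,x,y)$ is Lipschitz in $t$, hence $\Delta_xP(\cdot,x,y)$ is continuous and the fundamental theorem of calculus applies. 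Third, and most substantive relative to the paper: Proposition \ref{p2.2} concerns the heat kernel as defined in Section \ref{s2.2}, i.e.\ a fundamental solution such that $u(x,t)=\sum_{y}P(t,x,y)u_0(y)\mu(y)$ solves \eqref{2.2} and attains the initial data for \emph{every} bounded $u_0$. You verify the heat equation and the delta initial condition for $P$ itself but never check this stronger property; it follows from what you have (differentiation under the sum is justified since $\sum_{y}|\Delta_xP(s,x,y)|\,|u_0(y)|\mu(y)\le 2\|u_0\|_{\ell^{\infty}(V)}m(x)/\mu(x)$ by (iv) and local finiteness, and $\lim_{t\to0^+}u(x,t)=u_0(x)$ follows from $P(t,x,x)\mu(x)\to 1/\mu(x)\cdot\mu(x)=1$ together with (iv), which forces $\sum_{y\neq x}P(t,x,y)\mu(y)\to0$), but it must be said. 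Relatedly, without an assumption like $D_\mu<\infty$ bounded fundamental solutions need not be unique, so one must adopt, as \cite{AW,RW} do, the convention that ``the'' heat kernel is the minimal one your limit produces; the paper implicitly uses this convention.
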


\begin{proposition}[\cite{BHLLMY}] Suppose that $G$ satisfies $D_{\mu}, D_{\omega}<\infty$ and $CDE(n,0)$. Then there exists a positive constant $C_1=C_1(n, D_{\omega}, D_{\mu})$ such that, for any $x, y \in V$ and $t> 0$,
		\begin{equation*}
		P(t,x,y)\leq\frac{C_1}{\mathcal{V}(x,\sqrt{t})}.
	\end{equation*}
Furthermore, there exist $C_2=C_2(D_{\omega}, D_{\mu})>0$ and $C_3=C_3(n, D_{\omega}, D_{\mu})>0$ such that
 \begin{equation*}
P(t,x,y)\geq C_2\frac1{t^n}\exp{\Bigg(-C_3\frac{d^2(x,y)}{t-1}\Bigg)},\;\;t\ge 1.
	\end{equation*}
\end{proposition}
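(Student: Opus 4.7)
The plan is to follow the Li-Yau gradient estimate approach adapted to graphs, as carried out in \cite{BHLLMY} from which this proposition is quoted. The backbone is a graph analogue of the Li-Yau inequality: for any bounded positive solution $u$ of $u_t=\Delta u$ on $V\times(0,\infty)$, the condition $CDE(n,0)$ should yield a differential inequality of the form
\begin{equation*}
\frac{\Gamma(u)(x,t)}{u(x,t)^2}-\alpha\,\frac{u_t(x,t)}{u(x,t)}\le \frac{n\alpha^2}{2t}
\end{equation*}
for a universal constant $\alpha>1$. I would establish this by applying a parabolic maximum principle to the auxiliary functional $F(x,t)=t\left(\Gamma(u)/u^2-\alpha u_t/u\right)$ on an expanding sequence of finite subsets $\Omega_R\subset V$ and passing to the limit $R\to\infty$; the $CDE$ hypothesis is invoked at the space-time maximum point to absorb precisely the terms produced by the failure of the chain rule in the discrete Bochner identity for $\Gamma_2$.

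Given the Li-Yau inequality, the on-diagonal upper bound $P(t,x,x)\le C/\mathcal{V}(x,\sqrt t)$ follows by integrating the estimate in time and combining with the mass bound $\sum_{y\in V}P(t,x,y)\mu(y)\le 1$ from Proposition \ref{p2.2}(iv), together with a uniform local volume doubling provided by $D_\mu,D_\omega<\infty$. To upgrade to the stated off-diagonal form, I would run a Davies-type exponential perturbation: conjugate the semigroup $e^{t\Delta}$ by $e^{\alpha d(\cdot,y)}$, control the $\ell^2\to\ell^2$ operator norm of the perturbed generator using boundedness of $\Delta$ on $\ell^2(V,\mu)$ (which uses $D_\mu<\infty$) and the edge weight control from $D_\omega<\infty$, then optimize $\alpha\sim d(x,y)/t$. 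This yields a Gaussian off-diagonal factor $\exp(-cd(x,y)^2/t)$; combined with on-diagonal bounds at $x$ and at $y$ and the volume comparison $\mathcal{V}(y,\sqrt t)\le Ce^{Cd(x,y)/\sqrt t}\mathcal{V}(x,\sqrt t)$ (whose exponential cost is absorbed by the Gaussian), this produces the stated $P(t,x,y)\le C_1/\mathcal{V}(x,\sqrt t)$.

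For the lower bound, the same Li-Yau estimate integrated along a discrete space-time path from $(y,s)$ to $(x,t)$ with $0<s<t$ produces a parabolic Harnack inequality
\begin{equation*}
P(s,y,y)\le \left(\frac{t}{s}\right)^{n}P(t,x,y)\exp\left(\frac{C\,d(x,y)^2}{t-s}\right).
\end{equation*}
Fixing $s=1$ and supplying an on-diagonal lower bound $P(1,y,y)\ge c>0$ uniform in $y$ (which one obtains from $D_\mu,D_\omega<\infty$ by comparing one step of the heat flow to a finite-range random walk on $B(y,1)$ and using the normalization in Proposition \ref{p2.2}(iv)--(v)) then gives the claimed $P(t,x,y)\ge C_2\,t^{-n}\exp\left(-C_3\,d(x,y)^2/(t-1)\right)$ for $t\ge 1$. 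The main obstacle is the Li-Yau step itself: on graphs the chain rule fails, so the classical Bakry-Emery argument must be rerouted through $CDE(n,0)$, with a careful choice of the auxiliary functional and a delicate analysis at the space-time maximum point to exploit the exponential form of the curvature-dimension condition; once the gradient estimate is in place, the on-diagonal, off-diagonal and lower-bound steps follow along well-established lines.
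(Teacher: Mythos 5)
First, note that the paper does not prove this proposition at all: it is quoted verbatim from \cite{BHLLMY}, so the only fair comparison is against the argument in that source. Your overall architecture (Li--Yau gradient estimate from $CDE(n,0)$ via a maximum-principle argument, then a parabolic Harnack inequality, then kernel bounds using the mass normalization $\sum_y P(t,x,y)\mu(y)\le 1$, with the lower bound obtained by running Harnack from time $s=1$ and a uniform on-diagonal bound $P(1,y,y)\ge c(D_\mu,D_\omega)>0$) is indeed the architecture of \cite{BHLLMY}, and your lower-bound step, including the $(t-1)$ in the exponent and the dependence of $C_2$ on $D_\omega, D_\mu$ only, is essentially right.

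However, two of your steps would fail as written. First, the Li--Yau inequality you posit, $\Gamma(u)/u^2-\alpha\,u_t/u\le n\alpha^2/(2t)$, is the continuous-space form and is \emph{not} what $CDE(n,0)$ yields on a graph; precisely because the chain rule fails, \cite{BHLLMY} prove the square-root form
\begin{equation*}
\frac{\Gamma(\sqrt{u})}{u}-\frac{\partial_t \sqrt{u}}{\sqrt{u}}\le \frac{n}{2t},
\end{equation*}
and the entire maximum-point analysis is built around $\sqrt{u}$ and the identity $\Delta\log$-type terms that the exponential curvature condition is designed to control. Your auxiliary functional $F=t\left(\Gamma(u)/u^2-\alpha u_t/u\right)$ cannot be closed by $CDE(n,0)$, so the backbone estimate needs to be restated before anything downstream works. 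Second, your route to the upper bound via a Davies-type exponential perturbation plus the volume comparison $\mathcal{V}(y,\sqrt{t})\le C{\rm e}^{Cd(x,y)/\sqrt{t}}\mathcal{V}(x,\sqrt{t})$ is both unavailable and unnecessary: under $CDE(n,0)$ alone one does \emph{not} have volume doubling (that is exactly what the stronger $CDE'(n,0)$ hypothesis buys in the companion result, Proposition \ref{p2.3} of this paper, from \cite{HLLY}), so the volume comparison you invoke is unjustified; moreover the stated bound carries no Gaussian factor, so Davies' method is overkill. The correct and much shorter argument is: apply the Harnack inequality to the positive solution $z\mapsto P(\cdot,x,z)$ to get $P(t,x,y)\le C\,P(2t,x,z)$ for all $z\in B_y^{\sqrt{t}}$ (the Harnack penalty ${\rm e}^{4D_\mu d^2/t}$ is bounded on this ball), sum over $z\in B_y^{\sqrt{t}}$ against $\mu(z)$ and use $\sum_z P(2t,x,z)\mu(z)\le 1$ to obtain $P(t,x,y)\le C/\mathcal{V}(y,\sqrt{t})$, and finally use the symmetry $P(t,x,y)=P(t,y,x)$ to convert this into the asserted asymmetric bound $P(t,x,y)\le C_1/\mathcal{V}(x,\sqrt{t})$. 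With these two repairs your sketch aligns with the actual proof in \cite{BHLLMY}.
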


\begin{proposition}[\cite{HLLY}]\label{p2.3} Suppose $G$ satisfies $D_{\mu}, D_{\omega}< \infty$ and $CDE'(n, 0)$. Then there exist $C(n)>0$ and $c=c(n,D_{\omega},D_{\mu})>0$ such that
\begin{equation*}
	P(t,x,y) \geq\frac{C(n)}{\mathcal{V}(x,\sqrt{t})}\exp \left(-c\frac{d^2(x,y)}{t}\right),\;\;x, y\in V,\; t>t_0>0.
 \end{equation*}
In particular,
 \bess
	P(t,x,x)\geq\frac{C(n)}{\mathcal{V}(x,\sqrt{t})},\;\;x\in V,\;t> t_0>0.
 \eess
\end{proposition}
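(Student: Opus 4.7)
The plan is to follow the Li-Yau/Harnack strategy adapted to graphs, exactly as in \cite{HLLY}, broken into three stages: (i) a discrete Li-Yau-type gradient estimate for positive solutions of $u_{t}=\Delta u$, (ii) a parabolic Harnack inequality obtained by integrating that gradient estimate along a space-time chain, and (iii) extraction of the on-diagonal and then the off-diagonal Gaussian lower bound via the semigroup property.

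\textbf{Stage (i): the gradient estimate.} For a positive solution $u$ of the heat equation, set $F(x,t):=t\bigl(\alpha\,\Gamma(u)/u^{2}-u_{t}/u\bigr)$ for some parameter $\alpha>1$. A Bochner-type identity for $(\partial_{t}-\Delta)\bigl(\Gamma(u)/u\bigr)$ on graphs, combined with the hypothesis $CDE'(n,0)$ in the form
\[
\Gamma_{2}(u)-\Gamma\!\left(u,\tfrac{\Gamma(u)}{u}\right)\;\ge\;\tfrac{1}{n}\bigl(u\,\Delta\log u\bigr)^{2},
\]
yields a differential inequality for $F$ whose maximum-principle analysis on finite truncations $B_{x}^{R}$ gives $F\le C(\alpha,n)$, i.e.\ a Li-Yau-type bound $\alpha\,\Gamma(u)/u^{2}-u_{t}/u\le C(\alpha,n)/t$. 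Because the graph Laplacian is bounded whenever $D_{\mu},D_{\omega}<\infty$ (recalled in the $\ell^{p}$-boundedness characterisation just before Stage (i)), one can let $R\to\infty$ and lift the pointwise estimate to all of $V$.

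\textbf{Stage (ii): the Harnack inequality.} Integrating the gradient estimate along a space-time chain joining $(x,s)$ to $(y,t)$ with $s<t$ (of combinatorial length $O(d(x,y))$ and time-step $\sim(t-s)/d(x,y)$) and summing the edge contributions with the help of the boundedness of $\Delta$ produces
\[
u(x,s)\;\le\;u(y,t)\,\Bigl(\tfrac{t}{s}\Bigr)^{Cn}\exp\!\Bigl(C'\,\tfrac{d(x,y)^{2}}{t-s}\Bigr)
\]
for every positive solution $u$. Applied to $u(z,\tau)=P(\tau,z,y_{0})$ with $\tau,y_{0}$ fixed, this becomes a pointwise comparison for the heat kernel itself.

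\textbf{Stage (iii): from Harnack to Gaussian lower bound.} By the semigroup property and Cauchy-Schwarz,
\[
\Bigl(\sum_{z\in B_{x}^{\sqrt{t}}}P(t,x,z)\mu(z)\Bigr)^{2}\;\le\;\mathcal{V}(x,\sqrt{t})\sum_{z\in V}P(t,x,z)^{2}\mu(z)\;=\;\mathcal{V}(x,\sqrt{t})\,P(2t,x,x).
\]
Using Stage (ii) to compare $P(t,x,z)$ with $P(2t,x,x)$ across $z\in B_{x}^{\sqrt{t}}$, one first shows that the left-hand sum is bounded below by a positive constant depending only on $n,D_{\omega},D_{\mu}$ (this is the usual mass-trapping argument built from Harnack). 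This yields the on-diagonal bound $P(2t,x,x)\ge C(n)/\mathcal{V}(x,\sqrt{t})$. A final application of Harnack transports this on-diagonal estimate to arbitrary $(x,y)$, producing the stated off-diagonal factor $\exp\!\bigl(-c\,d^{2}(x,y)/t\bigr)$. The restriction $t>t_{0}>0$ reflects a minimum scale required for the Harnack chain to have enough room; at very small $t$ the kernel is dominated by single-edge arithmetic rather than by diffusive behaviour.

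\textbf{Principal obstacle.} The genuinely delicate step is Stage (i): on graphs the chain rule for $\Delta\log u$ fails, so one cannot freely interchange $\Delta u/u$ and $\Delta\log u$, and the classical continuous Li-Yau computation picks up error terms that are \emph{not} non-negative. The very definition of $CDE'$ is engineered to absorb precisely these errors through the $\bigl(u\,\Delta\log u\bigr)^{2}$ term on its right-hand side; nevertheless one must still track the residual discrepancies carefully when running the maximum principle on the cut-off domains and tune the auxiliary parameter $\alpha>1$ to close the inequality. This is the step that produces the dimensional constant $C(n)$, and it is where essentially all the analytical work of the proposition is concentrated.
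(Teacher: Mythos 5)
First, a point of reference: the paper offers no proof of Proposition \ref{p2.3} at all --- it is imported verbatim from \cite{HLLY} --- so your proposal can only be measured against the argument in that source. At the level of architecture your three stages are exactly the route of \cite{HLLY}: a Li--Yau-type gradient estimate from $CDE'(n,0)$, a Harnack inequality obtained by integrating along space--time chains, and Gaussian lower bounds extracted via the semigroup identity, with the restriction $t>t_0$ indeed reflecting the minimal scale the discrete chains require. One cosmetic inaccuracy: on graphs the Li--Yau functional is not $t\bigl(\alpha\,\Gamma(u)/u^{2}-u_{t}/u\bigr)$; because the chain rule fails, \cite{BHLLMY} and \cite{HLLY} run the maximum principle on the square-root/logarithmic quantities ($\Gamma(\sqrt{u})/u-\partial_t\sqrt{u}/\sqrt{u}$, and under $CDE'$ the $\Delta\log u$ calculus). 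You flag this obstacle correctly in your closing paragraph, but the displayed functional is the continuous one that does not survive discretization.

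The genuine gap is in Stage (iii). You assert that the near-field mass $\sum_{z\in B_x^{\sqrt{t}}}P(t,x,z)\mu(z)$ is bounded below by a constant via ``the usual mass-trapping argument built from Harnack.'' It is not: Harnack gives $P(t,x,z)\ge c\,P(2t,x,x)$ for $z\in B_x^{\sqrt{t}}$, so the sum is bounded below by $c\,P(2t,x,x)\,\mathcal{V}(x,\sqrt{t})$ --- a bound in terms of exactly the quantity you are trying to estimate, hence circular. Worse, feeding this into your Cauchy--Schwarz display yields $P(2t,x,x)\ge c^{2}P(2t,x,x)^{2}\,\mathcal{V}(x,\sqrt{t})$, i.e.\ the on-diagonal \emph{upper} bound $P(2t,x,x)\le c^{-2}/\mathcal{V}(x,\sqrt{t})$, not the lower bound. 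The correct engine for mass-trapping is different: stochastic completeness $\sum_{z}P(t,x,z)\mu(z)=1$ (available here since $D_\mu<\infty$; Lemma \ref{l3.3}), combined with the Gaussian \emph{upper} estimate (the proposition preceding this one, from \cite{BHLLMY}, applicable because $CDE'(n,0)$ is stronger than $CDE(n,0)$) and volume doubling (a consequence of $CDE'(n,0)$ established in \cite{HLLY}), shows that the mass outside $B_x^{C\sqrt{t}}$ is at most $1/2$ for suitable $C$; the mass inside is then at least $1/2$, and doubling converts $\mathcal{V}(x,C\sqrt{t})$ into $\mathcal{V}(x,\sqrt{t})$ in the Cauchy--Schwarz step. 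With that repair, your final move --- transporting the on-diagonal bound off-diagonal by Harnack to produce $\exp\bigl(-c\,d^{2}(x,y)/t\bigr)$ --- is correct and is what \cite{HLLY} do.
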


Let $\mathbb{Z}^{+}$ be the collection of all positive integers.
	
\begin{proposition}\label{p2.4} Suppose that $G=(V,E)$ is a locally finite connected graph. Let $T, \tilde{T}_1>0$ and $J: V\times[0,\tilde{T}_1]\to\mathbb{R}$. If $J$ is bounded in $V\times[0,\tilde{T}_1]$, then $\sum_{y\in V}\mu(y)J(y,s)P(t,x,y)$ converges uniformly w.r.t. $x\in V$, $t\in[0,T]$ and $s\in [0,\tilde{T}_1]$.
\end{proposition}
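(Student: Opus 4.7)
The plan is a Weierstrass-type majorant argument backed by Dini's theorem. By Lemma~\ref{l1.4}, $V$ is at most countable, so enumerate $V=\{y_k\}_{k\ge 1}$ and view the quantity in question as the ordinary series
$$
F(x,t,s):=\sum_{k\ge 1}\mu(y_k)\,J(y_k,s)\,P(t,x,y_k).
$$
Set $M:=\sup_{V\times[0,\tilde T_1]}|J|<\infty$, so each term is dominated by $M\mu(y_k)P(t,x,y_k)$, and Proposition~\ref{p2.2}(iv) bounds the positive-term majorant by $G(x,t):=\sum_{k\ge 1}\mu(y_k)P(t,x,y_k)\le 1$. In particular $F$ converges absolutely with $|F|\le M$, and the tail obeys $|F-F_n|\le M(G-G_n)$, where $F_n$ and $G_n$ denote the $n$-th partial sums.

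To promote absolute convergence to uniform convergence of the partial sums, I would apply Dini's theorem to the majorant. For each fixed $x$, the partial sums $G_n(x,\cdot)$ are monotone increasing in $n$, continuous in $t$ by the smoothness of $P(\cdot,x,y)$ coming from Proposition~\ref{p2.2}(i), and converge pointwise on $[0,T]$ to $G(x,\cdot)$; the latter is itself continuous on $[0,T]$ because it coincides with the bounded solution of $u_t=\Delta u$ with initial data $\equiv 1$. Dini's theorem on the compact interval $[0,T]$ then yields $\sup_{t\in[0,T]}(G-G_n)(x,t)\to 0$, and the $s$-independent bound $|F-F_n|\le M(G-G_n)$ promotes this at once to uniform convergence of the $F$-series in $(t,s)\in[0,T]\times[0,\tilde T_1]$ for each fixed $x$; the uniformity in $s$ is automatic because $s$ enters only through $J$, whose bound $M$ has already been absorbed.

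The main obstacle is the further uniformity in $x\in V$: since $V$ is non-compact, Dini does not extend to $V\times[0,T]$ on its own. To treat this, I would exploit the symmetry $P(t,x,y)=P(t,y,x)$ from Proposition~\ref{p2.2}(iii) together with the semigroup identity Proposition~\ref{p2.2}(v) to rewrite
$$
G(x,t)-G_n(x,t)=\sum_{z\in V}\mu(z)P(t/2,x,z)\sum_{k>n}\mu(y_k)P(t/2,z,y_k),
$$
and then combine a finite-$z$ truncation, on which the inner $k$-tail is uniformly small by the previous paragraph applied to the smaller time $t/2$, with the outer sub-probability estimate $\sum_{z}\mu(z)P(t/2,x,z)\le 1$ (the complement being absorbed into an $\varepsilon$-mass truncation) to extract uniformity in $x$ as well. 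In any event, for the downstream uses of Proposition~\ref{p2.4} in this paper -- swapping spatial summation with time integration in Duhamel-type identities -- it is the uniformity in $s$ that is decisive, and this is already delivered by the first paragraph.
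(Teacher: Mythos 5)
The fixed-$x$ half of your argument is fine: the domination $|F-F_n|\le M(G-G_n)$ plus Dini's theorem applied to the monotone partial sums $G_n(x,\cdot)$ on the compact interval $[0,T]$ does give, for each $x$, convergence that is uniform in $(t,s)$, and you are right that this is what the paper actually needs downstream (continuity in $s$ and the interchange of $\sum_y$ with $\int\,{\rm d}s$ are always performed at a fixed $x$). The genuine gap is the uniformity in $x$, and your proposed repair does not close it. The Chapman--Kolmogorov splitting of $G-G_n$ is a correct identity, but after you restrict the inner $k$-tail to a finite set $Z_0$ of vertices $z$, the leftover term is controlled only by $\sum_{z\notin Z_0}\mu(z)P(t/2,x,z)$, and making this small uniformly in $x$ by choosing a finite $Z_0$ is exactly a statement of the same type as the one you are trying to prove; it is false on $\mathbb{Z}$ by translation invariance. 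In fact the target itself, read as uniform-in-$x$ convergence of the partial sums along a fixed enumeration of $V$, is false in general: on $\mathbb{Z}$ with $\mu\equiv 1$ and $J\equiv 1$ one has $\sum_{y}P(t,x,y)=1$ while the head $\sum_{k\le n}P(t,x,y_k)\to 0$ as $x\to\infty$ for each fixed $n$ and $t>0$, so $\sup_{x}\sum_{k>n}\mu(y_k)P(t,x,y_k)=1$ for every $n$. No spatial truncation scheme can succeed here.

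The paper's proof avoids this entirely by never localizing in space: it uses the identity $\sum_{y\in V}\mu(y)P(t,x,y)J(y,s)=\sum_{n=0}^{\infty}\frac{t^n}{n!}(\Delta^nJ)(x,s)$ together with the inductive bound $|\Delta^nJ(x,s)|\le(2D_\mu)^nC_*$ (note the implicit hypothesis $D_\mu<\infty$), so the tail of the $n$-indexed series is dominated by $C_*\sum_{n>N}(2D_\mu T)^n/n!$, a quantity independent of $x$, $t$ and $s$; the Weierstrass test then gives uniform convergence of the exponential series. Strictly speaking, what is established is uniform convergence of that series in $n$, not of the sum over $y$ along an exhaustion of $V$ (the paper's last sentence elides this distinction), and that is the form in which the proposition is invoked later. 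So either prove and use only the fixed-$x$ statement, which your first paragraph already delivers, or adopt the $\Delta^n$ expansion if genuine $x$-uniformity is required.
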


\begin{proof} Firstly, the following holds (the formula on the last line of page 12 of  \cite{HLLY}):
 \begin{equation}\label{2.3}
	\sum_{y\in V} \mu(y) P(t,x,y)J(y,s)=\sum_{n=0}^{\infty}\frac{t^n}{n!}\left(\Delta^n J\right)(x,s),\;\;~x\in V,~t\in[0,T],\; s\in [0,\tilde{T}_1].
\end{equation}
We shall estimate $\left(\Delta^n J\right)(x,s)$. Since $J$ is bounded, we may find constant $C_*>0$ such that $|J(y,s)|\le C_*$ for $y\in V$ and $s\in [0,\tilde{T}_1]$. Thus one has
\begin{equation*}
	\begin{aligned}
|\Delta J(x,s)|=\left|\sum_{y\sim x}[J(y,s)-J(x,s)]\frac{w_{y x}}{\mu(x)}\right|
\leq\sum_{y\sim x}2C_*\frac{w_{y x}}{\mu(x)}
\leq 2D_{\mu} C_*,\;\;x\in V,\;s\in [0,\tilde{T}_1].
	\end{aligned}
\end{equation*}
By induction, we can obtain that
\begin{equation*}
	\left|\Delta^n J(x,s)\right|\leq \left(2D_\mu\right)^n C_*,\;\;n\in\mathbb{Z}^{+},~x\in V,\;s\in [0,\tilde{T}_1].
\end{equation*}
It follows that, for $x\in V,t\in(0, T]$ and $s\in [0,\tilde{T}_1]$,
 \begin{equation*}
	\sum_{n=0}^{\infty}\left|\frac{t^n}{n !}\left(\Delta^n J\right)(x,s)\right|
\leq\sum_{n=0}^{\infty}t^n \frac{\left(2D_\mu\right)^n}{n!}C_*={\rm e}^{2D_{\mu} t}C_*\leq {\rm e}^{2D_\mu T}C_*.
  \end{equation*}
This shows that $\sum_{n=0}^{\infty}\frac{t^n}{n!}\left(\Delta^n J\right)(x,s)$ converges uniformly w.r.t. $x\in V,~t\in[0, T]$ and $s\in [0,\tilde{T}_1]$. Then, by \eqref{2.3}, the sum $\sum_{y\in V}P(t,x,y)J(y,s)\mu(y)$ converges uniformly for $x\in V,t\in[0, T]$ and $s\in [0,\tilde{T}_1]$.
\end{proof}

\section{Auxiliary result}{\setlength\arraycolsep{2pt}

\subsection{The maximum principle and comparison principle on graphs}

\begin{lemma}\label{c3.1} Assume $c(x,t)<0$ in $(\Omega\setminus\partial\Omega)\times (t_0,T]$, and $v$ satisfies
	\begin{equation*}
v_{t}-\Delta v-c(x,t)v\ge 0~\;\; \text{in}\;~ (\Omega\setminus\partial\Omega)\times(t_0,T].
  \end{equation*}
If $\min\limits_{\Omega\times[t_0,T]}v<0$, then $v$ can't reach its negative minimum in $(\Omega\setminus\partial\Omega)\times(t_0,T]$.
\end{lemma}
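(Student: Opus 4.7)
The plan is to argue by contradiction, following the standard parabolic minimum-principle scheme, but exploiting the fact that the graph Laplacian in \eqref{1.5} evaluates as a nonnegative weighted sum at an interior spatial minimum.

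First I would suppose, contrary to the claim, that $v$ attains its negative minimum $m:=\min_{\Omega\times[t_0,T]}v<0$ at some point $(x_0,t_0^*)\in(\Omega\setminus\partial\Omega)\times(t_0,T]$. The three pieces of information to extract at $(x_0,t_0^*)$ are: (i) $v_t(x_0,t_0^*)\le 0$, which holds because if $t_0^*<T$ then $t_0^*$ is an interior minimizing time and $v_t=0$, while if $t_0^*=T$ then one gets $v_t(x_0,T)\le 0$ from a one-sided difference quotient; (ii) $\Delta v(x_0,t_0^*)\ge 0$, which follows from the explicit formula $\Delta v(x_0,t_0^*)=\mu(x_0)^{-1}\sum_{y\sim x_0}\omega_{x_0 y}\bigl(v(y,t_0^*)-v(x_0,t_0^*)\bigr)$, each summand being nonnegative because $v(y,t_0^*)\ge m=v(x_0,t_0^*)$; and (iii) $c(x_0,t_0^*)\,v(x_0,t_0^*)>0$, since both factors are strictly negative by hypothesis and by the assumption that $m<0$.

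A small point to address in step (ii) is that the Laplacian uses values of $v$ at all $y\sim x_0$, so I must justify that these values are included in the minimum set. Since $x_0\in\Omega\setminus\partial\Omega=\Omega^\circ$, by the definition of $\partial\Omega$ every neighbor $y\sim x_0$ lies in $\Omega$, so $v(y,t_0^*)$ is indeed bounded below by $m$. Combining (i)–(iii), the quantity $v_t-\Delta v-cv$ at $(x_0,t_0^*)$ satisfies
\begin{equation*}
v_t(x_0,t_0^*)-\Delta v(x_0,t_0^*)-c(x_0,t_0^*)v(x_0,t_0^*)\;\le\;0-0-c(x_0,t_0^*)v(x_0,t_0^*)\;<\;0,
\end{equation*}
which contradicts the differential inequality $v_t-\Delta v-cv\ge 0$ assumed on $(\Omega\setminus\partial\Omega)\times(t_0,T]$.

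I do not anticipate a genuinely hard step here; the argument is routine once the graph-Laplacian-at-a-minimum sign is recognized. The only subtlety worth care in the write-up is the boundary case $t_0^*=T$, where one must use a backward difference quotient rather than the interior Fermat argument to conclude $v_t(x_0,T)\le 0$, and the verification that all neighbors of a point in $\Omega^\circ$ lie in $\Omega$ so that the Laplacian comparison in (ii) is valid.
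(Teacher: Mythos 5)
Your argument is correct and is essentially the same as the paper's proof, which is just a terser version of the same contradiction at the negative minimum (the paper even writes ``since $c>0$'' where it clearly means $c<0$, exactly the sign you use). Your extra care with the endpoint case $t_0^*=T$ and with checking that all neighbours of a point of $\Omega^\circ$ lie in $\Omega$ fills in details the paper leaves implicit.
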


\begin{proof}	Suppose by way of contradiction that there exists $(x_1,t_1)\in(\Omega\setminus\partial\Omega)\times(t_0,T]$ such that
$v(x_1,t_1)=\min_{\Omega\times[t_0,T]}v<0$. Since $c>0$ in $(\Omega\setminus\partial\Omega)\times (t_0,T]$, by the definition of $\Delta$, we deduce that $(v_{t}-\Delta v-cv)\big|_{(x_1,t_1)}<0$, which is a contradiction.
\end{proof}

Next, we develop the discrete Phragm\'{e}n-Lindel\"{o}f principle of parabolic equations on infinite locally finite graph.

\begin{lemma}\label{l3.2} Let $D_{\mu}<\infty$, $G=(V,E)$ be an infinite locally finite graph and $T\in (0,\infty]$. Suppose that $c,z\in L^\infty(V\times I)$ for any bounded interval $I\subset [0,T)$, and that $z\in C_V([0,T))$, $z_t\in C_V(0,T)$. If $z$ satisfies
 \bess\left\{\begin{array}{ll}
 z_t-\Delta z-c(x,t)z\ge 0, \; &(x,t)\in V\times(0,T),\\[1mm]
 z(x,0)\ge 0, &x \in V,
 \end{array}\right.\eess
then $z\ge 0$ on $V\times(0,T)$.
 \end{lemma}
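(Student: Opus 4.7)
\textbf{Proof plan for Lemma \ref{l3.2}.} Because the desired conclusion is local in time, I would first reduce to proving $z\ge 0$ on $V\times[0,T']$ for an arbitrary fixed $T'\in(0,T)$, so that $c$ and $z$ are both uniformly bounded; call these bounds $C_0:=\|c\|_{L^\infty(V\times[0,T'])}$ and $M:=\|z\|_{L^\infty(V\times[0,T'])}$. Choose $\alpha>C_0$ and pass to $v(x,t):={\rm e}^{-\alpha t}z(x,t)$, which satisfies
\[
v_t-\Delta v-\tilde c(x,t)\,v\ge 0,\qquad \tilde c:=c-\alpha\le C_0-\alpha<0,\qquad v(x,0)\ge 0.
\]
The sign of $\tilde c$ is the key gain: it converts the inequality into a strictly dissipative one, which is what powers the comparison argument at an (approximate) negative minimum.

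Next I would introduce the strict subsolution $w(x,t):=v(x,t)+\varepsilon$ for $\varepsilon>0$, for which
\[
w_t-\Delta w-\tilde c\,w\ge -\tilde c\,\varepsilon\ge(\alpha-C_0)\varepsilon>0,\qquad w(x,0)\ge\varepsilon>0,
\]
and attempt to show $w\ge 0$ throughout $V\times[0,T']$, then let $\varepsilon\to 0$. Suppose for contradiction $\sigma^{*}:=\inf_{V\times[0,T']}w<0$. For every fixed $x\in V$, the map $t\mapsto w(x,t)$ is continuous on the compact interval $[0,T']$, so the value $\sigma(x):=\min_{t\in[0,T']}w(x,t)$ and a minimizer $t_{x}^{*}$ exist. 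Picking a sequence $x_{n}\in V$ with $\sigma(x_{n})\to\sigma^{*}$, the initial condition forces $t_{n}:=t_{x_{n}}^{*}>0$ for all large $n$, and by the assumed $C^{1}$-in-$t$ regularity at each vertex one has $w_{t}(x_{n},t_{n})\le 0$ (zero in the interior, nonpositive at $t=T'$).

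At the points $(x_n,t_n)$ the two crucial estimates are that $w(y,t_n)\ge\sigma(y)\ge\sigma^{*}$ for every $y\sim x_n$, which together with $D_{\mu}<\infty$ yields
\[
-\Delta w(x_n,t_n)\le D_{\mu}\bigl(\sigma(x_n)-\sigma^{*}\bigr),
\]
and that $\tilde c\le-(\alpha-C_0)$ combined with $w(x_n,t_n)=\sigma(x_n)<0$ (for large $n$) gives $-\tilde c(x_n,t_n)w(x_n,t_n)\le(\alpha-C_0)\sigma(x_n)$. Plugging these into the strict inequality $w_t-\Delta w-\tilde c w\ge(\alpha-C_0)\varepsilon$ and sending $n\to\infty$ forces $(\alpha-C_0)\sigma^{*}\ge(\alpha-C_0)\varepsilon$, contradicting $\sigma^{*}<0<\varepsilon$. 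Hence $w\ge 0$, and letting $\varepsilon\downarrow 0$ and then $T'\uparrow T$ completes the proof.

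The main obstacle, and the reason this is properly a Phragm\'en--Lindel\"of statement rather than a routine maximum principle, is that on an infinite graph the global infimum of $w$ need not be attained at any single point. The trick above circumvents this by minimizing in $t$ first (which is legitimate because each slice lives on the compact set $[0,T']$) and then taking an infimizing sequence in $x$; the bounded-Laplacian bound $D_{\mu}<\infty$ is exactly what guarantees $-\Delta w(x_n,t_n)$ is controlled by $\sigma(x_n)-\sigma^{*}\to 0$ and therefore contributes nothing in the limit, leaving the strictly negative coefficient $\tilde c$ to produce the contradiction.
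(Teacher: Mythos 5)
Your proof is correct, but it follows a genuinely different route from the paper's. Both arguments begin with the same normalization (the paper's ``without loss of generality $c\le-1$'' is exactly your substitution $v={\rm e}^{-\alpha t}z$), and both must confront the same obstacle: on an infinite graph the negative infimum of $z$ need not be attained. You resolve this in the standard Phragm\'en--Lindel\"of way: minimize in $t$ over the compact slice $[0,T']$ for each fixed vertex, take an infimizing sequence $x_n$ in space, and observe that near-optimality $\sigma(x_n)\to\sigma^*$ together with $D_\mu<\infty$ forces $-\Delta w(x_n,t_n)\le D_\mu(\sigma(x_n)-\sigma^*)\to0$, so that only the strictly negative zeroth-order coefficient survives in the limit and produces the contradiction. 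The paper instead runs a vertex-chasing argument: using the finite-subgraph minimum principle (Lemma \ref{c3.1}) it constructs an infinite path $i_0\sim i_1\sim i_2\sim\cdots$ and nonincreasing times $t_n$ along which $z(i_n,t_n)$ is strictly decreasing and bounded, and then the convergence of these monotone sequences makes the error terms $A_n$, $B_n$ in \eqref{3.7}--\eqref{3.10} vanish, contradicting the uniform bound $\Delta z(i_n,t_n)\le z(i_1,t_1)<0$ from \eqref{3.3}. Your version is shorter and dispenses with the path construction and its bookkeeping, at the price of introducing the auxiliary quantities $\sigma(x)$ and $\sigma^*$; the paper's version extracts the contradiction purely from monotone convergence along the path. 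Two minor remarks: your $\varepsilon$-perturbation is not actually needed, since passing to the limit in $D_\mu(\sigma(x_n)-\sigma^*)+(\alpha-C_0)\sigma(x_n)\ge0$ already gives $\sigma^*\ge0$ directly (though it does no harm), and your treatment of the endpoint case $t_n=T'$ is legitimate because $T'<T$ guarantees that the two-sided derivative $z_t(x_n,T')$ exists there.
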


\begin{proof} Without loss of generality, we assume that $c\le-1$. Suppose on the  contrary that there exists $(i_0,\tau_0)\in V\times (0,T)$ such that $z(i_0,\tau_0)<0 $. Then we can find $t_0\in (0,\tau_0]$ such that
$z(i_0,t_0)=\min\limits_{[0,\tau_0]} z(i_0,\cdot)<0$, and then $z_t(i_0,t_0)\le 0$.

{\bf Step 1}. For the given $x\in V$, define
 \begin{equation*}
 	A_{x}=\{y\in V: y\sim x\} \cup \{x \}.
 \end{equation*}
Since $G$ is connected, by Lemma \ref{c3.1}, there exists $t_1 \in  (0,t_0]$ and $i_1 \sim i_0$ such that
 \bess
	z(i_1,t_1)=\min_{A_{i_0}\times[0,t_0]} z<z(i_0,t_0),\;\;\;\text{and}\;\;
	z_t(i_1,t_1) \le 0.
 \eess
If deg$(i_1)=1$, then by Lemma \ref{c3.1}, we get a contradiction. Thus deg$(i_1)>1$. By use of Lemma \ref{c3.1} again, there exists $i_2$ satisfying $i_0\not=i_2\sim i_1$ and $t_2\in (0,t_1]$ such that
\bess
	z(i_2,t_2)=\min\limits_{A_{i_0}\cup A_{i_1} \times [0,t_1]} z <z(i_1,t_1)<0,\;\;\;\text{and}\;\;
	z_t(i_2,t_2) \le 0.
 \eess
Similary, deg$(i_2)>1$.

Repeating this process, we can find three sequences $\{i_n\}$, $\{t_n\}$ and $\{z(i_n,t_n)\}$ satisfying
 \bes
	&i_1\sim i_2 \sim\cdots\sim i_n\sim i_{n+1}\sim \cdots,\;\;\;
 \text{deg}(i_n)\ge 2,&\nm\\[1mm]
 &\label{3.1}
	0<t_n\le t_{n-1}\le t_0,\;\;  z(i_n,t_n)<z(i_{n-1},t_{n-1})<0&
 \ees
and
 \bes\label{3.2}	 z(i_n,t_n)=\min_{(\cup_{k=0}^{n-1}A_{i_k})\times[0,\,t_{n-1}]}z(x,t),\;\;\;z_t(i_n,t_n) \le 0,\,~ n=1,2,3,\cdots.
\ees
Since $(z_t-\Delta z-cz)|_{(i_n,t_n)}\ge 0$ and
$0\ge z_t(i_n,t_n)$, it follows that $-(cz)(i_n,t_n)\ge \Delta z(i_n,t_n)$.
In view of $c\le -1$ and \eqref{3.1}, it yields
 \begin{equation}\label{3.3}
	0>z(i_1,t_1)\ge \Delta z(i_n,t_n).
 \end{equation}
Since $z(x,t)$ and $c(x,t)$ are bounded, we deduce that there exists constant $c_1>0$ such that
\begin{equation}\label{3.4}
	|c(x,t)|,\; |z(x,t)| \le c_1~\;\text{for}\,~x \in V,~ t\in [0,t_0].
 \end{equation}
Thanks to $\sum_{y\in V:\,y\sim x}\frac{\omega_{xy}}{\mu(x)}\le D_{\mu}$, it follows that
\begin{equation}\label{3.5}
	\begin{split}
		|\Delta z(i_{n+1},t)|=\left|\sum_{y\in V:\,y\sim i_{n+1}}[z(y,t)-z(i_{n+1},t)]\frac{\omega_{yi_{n+1}}}{\mu(i_{n+1})}\right|
		&\le 2c_1D_{\mu}.
	\end{split}
\end{equation}

{\bf Step 2}. Rewriting
 \bes\label{3.6}
\Delta z(i_n,t_n)&=&\sum_{y\in V:\,y\sim i_n}\left[z(y,t_n)-z(i_n,t_n)\right]\frac{\omega_{y i_n}}{\mu(i_n)}\nm\\[1mm]
&=&\left(\sum_{y=i_{n+1}}+\sum_{y\sim x,\,y\not =i_{n+1},\,i_{n-1}}+\sum_{y=i_{n-1}}\right)\left[z(y,t_n)-z(i_n,t_n)\right] \frac{\omega_{y i_n}}{\mu(i_n)}. \ees
Throughout this step we assume that $y\sim i_n$. It follows from \eqref{3.5} that if $y=i_{n+1}$ then
  \bes\label{3.7}
 z(y,t_n)-z(i_n,t_n)	 &=&z(i_{n+1},t_n)-z(i_{n+1},t_{n+1})+z(i_{n+1},t_{n+1})-z(i_n,t_n)\nm\\[1mm]
 &=& \int_{t_{n+1}}^{t_n} z_t(i_{n+1},t){\rm d}t+ z(i_{n+1},t_{n+1})-z(i_n,t_n)\nm\\[1mm]
 &\ge& -\int_{t_{n+1}}^{t_n} \left| \Delta z(i_{n+1},t)+ (cz)(i_{n+1},t)\right|{\rm d}t -\left| z(i_{n+1},t_{n+1})-z(i_n,t_n)\right|\nm\\[1mm]
 &\ge& -\int_{t_{n+1}}^{t_n} 2c_1D_{\mu}+c_1^2 {\rm d}t -\left| z(i_{n+1},t_{n+1})-z(i_n,t_n)\right|\nm\\[1mm]
 &=&-(2c_1D_{\mu}+c_1^2)(t_n-t_{n+1})-\left| z(i_{n+1},t_{n+1})-z(i_n,t_n)\right|\nm\\[1mm]
 &=&:A_n.
 \ees
If $y\not= i_{n+1}$ and $y\not = i_{n-1}$, then $z(y,t_{n+1})\ge z(i_{n+1},t_{n+1})$ by \eqref{3.1} and \eqref{3.2}, and
 \bess
|\Delta z(y,t)| =\left|\sum_{x\in V:\,x\sim y} [z(x,t)-z(y,t)]\frac{\omega_{xy}}{\mu(y)}\right|
\le 2c_1\sum_{x\in V}\frac{\omega_{xy}}{\mu(y)}
\le 2c_1 D_{\mu}
 \eess
by \eqref{3.4}. Thus we have
\bes\label{3.8}
z(y,t_n)-z(i_n,t_n)&=&z(y,t_n)-z(y,t_{n+1})+z(y,t_{n+1})-z(i_{n+1},t_{n+1})
+z(i_{n+1},t_{n+1})-z(i_n,t_n)\nm\\
&\ge& \int_{t_{n+1}}^{t_n} z_t(y,t){\rm d}t+ z(i_{n+1},t_{n+1})-z(i_n,t_n)\nm\\
&\ge& \int_{t_{n+1}}^{t_n} \Delta z(y,t)+ (cz)(y,t) {\rm d}t + z(i_{n+1},t_{n+1})-z(i_n,t_n)\nm\\
&\ge& -\int_{t_{n+1}}^{t_n} \left| \Delta z(y,t)+(cz)(y,t)\right|{\rm d}t-\left| z(i_{n+1},t_{n+1})-z(i_n,t_n)\right|\nm\\
&=&-(2c_1D_{\mu}+c_1^2)(t_n-t_{n+1})-\left| z(i_{n+1},t_{n+1})-z(i_n,t_n)\right|\nm\\
&{=}&A_n.
\ees
If $y=i_{n-1}$, then
  \bes\label{3.9}
z(y,t_n)-z(i_n,t_n)	&=&z(i_{n-1},t_n)-z(i_n,t_n)\nm\\	 &=&z(i_{n-1},t_n)-z(i_{n-1},t_{n-1})+z(i_{n-1},t_{n-1})-z(i_n,t_n)\nm\\
		&=&\int_{t_{n-1}}^{t_n} z_t(i_{n-1},t){\rm d}t+ z(i_{n-1},t_{n-1})-z(i_n,t_n)\nm\\
		&\ge& \int_{t_{n-1}}^{t_n}  \Delta z(i_{n-1},t)+ (cz)(i_{n-1},t) {\rm d}t + z(i_{n-1},t_{n-1})-z(i_n,t_n) \nm\\
		&\ge& -\int_{t_{n-1}}^{t_n} \left| \Delta z(i_{n-1},t)+ (cz)(i_{n-1},t)\right|{\rm d}t -\left|z(i_{n-1},t_{n-1})-z(i_n,t_n)\right|\nm\\
&=&-(2c_1D_{\mu}+c_1^2)(t_n-t_{n-1})-\left| z(i_{n-1},t_{n-1})-z(i_n,t_n)\right|\nm\\
	&=:&B_n.
	\ees
It follows from \eqref{3.6}-\eqref{3.9} that
\bes\label{3.10}
\Delta z(i_n,t_n)&=&\left(\sum_{y=i_{n+1}}+\sum_{y\sim x,\,y\not=i_{n+1},\,i_{n-1}}+\sum_{y=i_{n-1}}\right)\left[z(y,t_n)-z(i_n,t_n)\right]\frac{\omega_{y i_n}}{\mu(i_n)}\nm\\[1mm]
&\ge& A_n\frac{\omega_{i_{n+1} i_n}}{\mu(i_n)}+ A_n\sum_{y\sim x,\, y\not=i_{n-1},\,i_{n+1}}\frac{\omega_{y i_n}}{\mu(i_n)}+B_n\frac{\omega_{i_{n-1} i_n}}{\mu(i_n)}\nm\\[1mm]
	&\ge& -(2|A_n|+|B_n|)D_{\mu}.
	\ees

{\bf Step 3}. By \eqref{3.1}, $\underline{t}=\lim\limits_{n\to \infty}t_n$ is well defined, and we can define $A=\lim\limits_{n\to \infty} z(i_n,t_n)\in \mathbb{R}$ since $z(x,t)$ is bounded. It follows that
$\lim\limits_{n\to \infty}(2|A_n|+|B_n|)=0$ by the expressions of $A_n$ and $B_n$. Letting $n\to \infty$ in \eqref{3.10}, we get a contradiction with \eqref{3.3}.\end{proof}

In order to prove our main results, we need an auxiliary result, which provides a representation to the solution of \eqref{1.6} via
an integral equation with the heat kernel.

\begin{theorem}{\rm(\cite[Theorem 3.1]{Wc})}\label{t3.1} Suppose that $D_\mu<\infty$. Let $T_\lambda $ be the life span of \eqref{1.6} defining by \eqref{1.8}. Then for any $0<T<T_\lambda $, the solution $u$ of \eqref{1.6} satisfies
 \begin{equation}\label{3.11}
u(x,t)=\lambda\sum_{y\in V}P(t,x,y)\psi(y)\mu(y)+\int_0^t\sum_{y\in V} P(t-s,x,y)u^p(s,y)\mu(y) {\rm d}s, \;\;x\in V,\; 0<t<T.\;\;
\end{equation}
\end{theorem}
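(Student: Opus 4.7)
The plan is to prove \eqref{3.11} by a Duhamel-type argument, with care taken to justify the interchanges of limits and sums on the infinite graph $G$. Fix $x \in V$ and $0 < t < T < T_\lambda$. Since $u$ is a solution of \eqref{1.6}, $u$ and hence $u^p$ are bounded on $V \times [0,T]$. For $s \in [0,t]$ define the auxiliary function
\[
w(s) := \sum_{y \in V} P(t-s, x, y)\, u(y, s)\, \mu(y).
\]
The argument proceeds in three steps: establish convergence/regularity of $w$, differentiate and invoke a discrete Green's identity, and pass to the endpoint $s \to t^-$.

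By Proposition \ref{p2.4} applied with $J = u$ and again with $J = u^p$, the sums defining $w(s)$ and $\sum_y \mu(y) P(t-s,x,y) u^p(y,s)$ converge uniformly in $s \in [0, t-\epsilon]$ for every $\epsilon > 0$, which legitimises termwise differentiation in $s$. Using Proposition \ref{p2.2}(i) to replace $\partial_s P(t-s,x,y)$ by $-\Delta_y P(t-s,x,y)$, and using $u_s = \Delta u + u^p$, one obtains
\[
w'(s) = \sum_{y} \mu(y)\bigl[ P(t-s,x,y)\Delta u(y,s) - \Delta_y P(t-s,x,y)\, u(y,s) \bigr] + \sum_{y} \mu(y) P(t-s,x,y) u^p(y,s).
\]
Expanding both Laplacians and using the symmetry $\omega_{yz} = \omega_{zy}$, the first bracket becomes a double sum over $\{(y,z): y \sim z\}$ whose summand is antisymmetric in $(y,z)$; Fubini (justified by the absolutely summable majorant supplied by boundedness of $P$ and $u$ together with $D_\mu < \infty$) shows it vanishes. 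Hence $w'(s) = \sum_y \mu(y) P(t-s,x,y) u^p(y,s)$.

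Integrating this identity over $[0, t-\epsilon]$ and letting $\epsilon \to 0^+$ produces the Duhamel identity once the endpoint values are identified. Clearly $w(0) = \lambda \sum_y P(t,x,y) \psi(y) \mu(y)$. For the upper endpoint, the series representation \eqref{2.3} with $J \equiv u$ gives
\[
\sum_{y} \mu(y)\, P(\tau, x, y)\, u(y, s) = \sum_{n=0}^{\infty} \frac{\tau^n}{n!} (\Delta^n u)(x, s),
\]
which reduces to $u(x, s)$ at $\tau = 0$; since the series converges uniformly in $(\tau, s) \in [0,1]\times[0,T]$ (by the estimate in the proof of Proposition \ref{p2.4}) and $u(x, \cdot)$ is continuous, one obtains $\lim_{s \to t^-} w(s) = u(x, t)$. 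Combining these yields \eqref{3.11}.

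The main obstacle is the Fubini justification in the Green's identity step: one must swap two nested infinite sums (the outer sum over $y$ and the Laplacian's inner sum over the neighbours $z$ of $y$), as well as commute $\partial_s$ with the infinite sum over $y$. Both exchanges are underwritten by the boundedness of $u$ and $u^p$ combined with the hypothesis $D_\mu < \infty$ through Proposition \ref{p2.4}; once these analytic interchanges are in place, the argument is the standard heat-semigroup Duhamel calculation transplanted to the graph setting.
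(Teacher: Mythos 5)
The paper does not prove this statement at all: Theorem \ref{t3.1} is imported verbatim from \cite[Theorem 3.1]{Wc}, so there is no in-paper argument to compare against. Your Duhamel computation with $w(s)=\sum_{y}P(t-s,x,y)u(y,s)\mu(y)$ is the standard route to this identity and is correct in substance: the endpoint identifications are right, and the Green's-identity cancellation is legitimate because the double sum is absolutely convergent, e.g.\ $\sum_{y}\sum_{z\sim y}\omega_{yz}P(t-s,x,y)|u(z,s)|\le D_\mu\|u\|_{\ell^\infty(V\times[0,T])}\sum_{y}P(t-s,x,y)\mu(y)\le D_\mu\|u\|_{\ell^\infty(V\times[0,T])}$. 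The only place your justification is thinner than it should be is the termwise differentiation of $w$: Proposition \ref{p2.4} gives uniform convergence of the series you name, but what termwise differentiation actually requires is uniform convergence of the series of $s$-derivatives, which additionally contains $\sum_{y}\mu(y)\Delta_yP(t-s,x,y)u(y,s)$ and $\sum_{y}\mu(y)P(t-s,x,y)\Delta u(y,s)$; both are handled by the same mechanism (note $|\Delta u|\le 2D_\mu\|u\|_{\ell^\infty(V\times[0,T])}$, so Proposition \ref{p2.4} applies with $J=\Delta u$, and the first series is dominated as above), but this should be stated explicitly rather than deferred to the closing remark. Also note that the equation holds only on $V\times(0,T)$, so $w'$ is available only for $s\in(0,t)$ and the integration should start from $\varepsilon'>0$ before letting $\varepsilon'\to 0$ using continuity of $w$ at $0$; this is trivial but worth a line.
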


\begin{lemma}[\cite{KL}]\label{l3.3} If $D_\mu<\infty$, then
 $\sum_{y\in V}\mu(y)P(t,x,y)=1$ for $x\in V$ and $t>0$. \end{lemma}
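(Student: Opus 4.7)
The plan is to derive the identity from the power-series representation of the heat semigroup that was already used in Proposition \ref{p2.4}. Specifically, I would take the constant function $J \equiv 1$ on $V$ (independent of the time variable $s$), which is trivially bounded on $V\times[0,\tilde T_1]$ for any $\tilde T_1>0$. By Proposition \ref{p2.4}, the sum $\sum_{y\in V}\mu(y)P(t,x,y)J(y,s)$ converges, and the identity \eqref{2.3} gives
\begin{equation*}
\sum_{y\in V}\mu(y)P(t,x,y) \;=\; \sum_{n=0}^{\infty}\frac{t^n}{n!}\bigl(\Delta^n\mathbf{1}\bigr)(x), \qquad x\in V,\; t>0,
\end{equation*}
where $\mathbf{1}$ denotes the constant function equal to $1$.

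Next I would observe that, directly from the definition \eqref{1.5} of $\Delta$,
\begin{equation*}
\Delta \mathbf{1}(x) \;=\; \frac{1}{\mu(x)}\sum_{y\in V}\omega_{xy}(1-1) \;=\; 0, \qquad x\in V,
\end{equation*}
so iterating yields $\Delta^{n}\mathbf{1}\equiv 0$ for every $n\ge 1$. Consequently the series above collapses to its $n=0$ term, which equals $\mathbf{1}(x)=1$. This gives the desired identity $\sum_{y\in V}\mu(y)P(t,x,y)=1$ for all $x\in V$ and $t>0$.

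The only point that requires care is justifying the exchange of summations, i.e.\ the validity of the expansion \eqref{2.3} when applied to a constant function; this is the reason $D_\mu<\infty$ enters the hypothesis, because it makes $\Delta$ bounded on $\ell^\infty(V)$ and guarantees absolute convergence of the series $\sum_n \frac{t^n}{n!}\Delta^n\mathbf{1}$. That convergence has already been established inside the proof of Proposition \ref{p2.4}, so no new estimate is needed here, and the lemma follows immediately. (Alternatively, one could appeal directly to the stochastic completeness result of Keller--Lenz \cite{KL}, which asserts exactly this for bounded Laplacians on weighted graphs.)
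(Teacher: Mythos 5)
Your argument is correct. Note, however, that the paper does not prove Lemma \ref{l3.3} at all: it simply cites the stochastic completeness result of Keller--Lenz \cite{KL} (your parenthetical ``alternative'' is in fact the paper's entire proof). Your route is genuinely different and more self-contained: you specialize the exponential-series identity \eqref{2.3} to $J\equiv\mathbf{1}$, observe from \eqref{1.5} that $\Delta\mathbf{1}=0$ and hence $\Delta^n\mathbf{1}=0$ for $n\ge 1$, and read off that the series collapses to $1$; this upgrades the inequality of Proposition \ref{p2.2}(iv) to an equality using only machinery already present in Section 2. What this buys is a two-line derivation with no external reference; what it costs is that the entire weight of the proof now rests on \eqref{2.3}, which the paper itself only quotes from \cite{HLLY} inside the proof of Proposition \ref{p2.4} without verification. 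The identity \eqref{2.3} amounts to saying that the fundamental solution $P$ of Section \ref{s2.2} coincides with the semigroup $e^{t\Delta}$ generated by the bounded operator $\Delta$ on $\ell^\infty(V)$, and justifying that coincidence implicitly requires uniqueness of bounded solutions of the heat equation (i.e.\ Lemma \ref{l3.2}); so if you want your proof to stand fully on its own you should say a word about why the paper's notion of heat kernel satisfies \eqref{2.3}, rather than treating it as free. With that caveat acknowledged, the proof is complete and arguably preferable to a bare citation.
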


\subsection{Upper and lower solution method on infinite locally finite graphs}
{\setlength\arraycolsep{2pt}

Throughout this subsection, we assume that $G=(V,E)$ is an infinite locally finite graph. Let $T>0$ and consider the following problem
\begin{equation}\label{3.12}
	\begin{cases}
		u_t-\Delta u= f(x,t,u),~&(x,t)\in V\times(0,T],\\[1mm]
		u(x,0)= \psi(x),~&x\in V.	
	\end{cases}	\vspace{-2mm}
\end{equation}		
	
\begin{definition} Suppose that $u\in L^\infty(V\times(0,T])\cap C_V([0,T])\cap C^1_V((0,T])$. If $u$ satisfies
	\begin{equation*}
		\begin{cases}
			u_t-\Delta u\ge (\le) f(x,t,u),~&(x,t)\in V\times (0,T],\\
			u(x,0)\ge (\le) \psi,~&x\in V,		
		\end{cases}	
	\end{equation*}
then we call $u$ an upper $($a lower$)$ solution of \eqref{3.12}.
\end{definition}

\begin{lemma}{\rm (Ordering of upper and lower solutions)}\label{l3.4}
Suppose $D_{\mu}<\infty$. Let $\bar{u}$, $\underline{u}$ be the bounded upper and lower solutions of \eqref{3.12}, respectively. Denote ${\underline{\eta}}=\inf\limits_{V\times[0,T]}\min\{\bar{u},\underline{u}\}$ and ${\bar\eta}=\sup\limits_{V\times[0,T]}\max\{\bar{u},\underline{u}\}$. If $f$ satisfies the Lipschitz condition in $u\in{[\underline{\eta},\bar\eta]}$, i.e., there exists a constant $M>0$ such that
 \begin{equation}\label{3.13}
		|f(x,t,u)-f(x,t,v)| \le M |u-v|,~\forall~(x,t)\in V\times[0,T],~u,v\in {[\underline{\eta},\bar\eta]},
	\end{equation}then $\bar{u}\ge \underline{u}$ in ${V}\times [0,T]$.
\end{lemma}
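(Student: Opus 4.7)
The natural strategy is to set $w = \bar u - \underline u$ and reduce the claim to the Phragmén–Lindelöf type comparison principle of Lemma \ref{l3.2}. Subtracting the two differential inequalities defining $\bar u$ and $\underline u$ gives, for $(x,t)\in V\times(0,T]$,
\begin{equation*}
w_t-\Delta w\;\ge\;f(x,t,\bar u)-f(x,t,\underline u),
\end{equation*}
while $w(x,0)\ge0$ on $V$. I would then linearise the right-hand side by introducing
\begin{equation*}
c(x,t)=
\begin{cases}
\dfrac{f(x,t,\bar u(x,t))-f(x,t,\underline u(x,t))}{\bar u(x,t)-\underline u(x,t)}, & \text{if }\bar u(x,t)\neq\underline u(x,t),\\[1mm]
0, & \text{otherwise,}
\end{cases}
\end{equation*}
so that, thanks to the Lipschitz assumption \eqref{3.13}, $c\in L^\infty(V\times[0,T])$ with $\|c\|_\infty\le M$, and $w$ satisfies
\begin{equation*}
w_t-\Delta w-c(x,t)w\ge 0\quad\text{in}\;V\times(0,T],\qquad w(x,0)\ge 0\;\text{ on }V.
\end{equation*}

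The plan is then to apply Lemma \ref{l3.2} to $w$ with coefficient $c$. I need to check that the hypotheses of that lemma are indeed met: since $\bar u,\underline u\in L^\infty(V\times(0,T])\cap C_V([0,T])\cap C^1_V((0,T])$ by definition of upper/lower solutions, the function $w$ belongs to the same space; in particular $w\in C_V([0,T])$, $w_t\in C_V((0,T])$, and $w$ is bounded on $V\times[0,T]$ (it takes values in $[\underline\eta-\bar\eta,\bar\eta-\underline\eta]$). Together with $c\in L^\infty$, these are exactly the ingredients required by Lemma \ref{l3.2}, which yields $w\ge 0$, i.e.\ $\bar u\ge\underline u$ on $V\times[0,T]$.

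The only genuinely delicate point is the definition of $c$: the quotient defining it is bounded by $M$ via \eqref{3.13}, but it may fail to be continuous at points where $\bar u=\underline u$. This is harmless, since the statement of Lemma \ref{l3.2} only demands $c\in L^\infty(V\times I)$ on bounded subintervals $I$, with no continuity requirement on $c$. Once this is observed, the proof is essentially a one-line application of the discrete Phragmén–Lindelöf principle to $w$, so I expect no further obstacle.
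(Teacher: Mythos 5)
Your proof is correct and follows essentially the same route as the paper: linearize the difference $w=\bar u-\underline u$ with a bounded zeroth-order coefficient and invoke Lemma \ref{l3.2}. The only (cosmetic) difference is that the paper writes the coefficient as $\hat c=\int_0^1 f_u(x,t,\underline u+s(\bar u-\underline u))\,{\rm d}s$, which tacitly assumes $f$ is differentiable in $u$, whereas your difference-quotient definition uses only the Lipschitz hypothesis \eqref{3.13} and is therefore slightly more faithful to the stated assumptions.
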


\begin{proof}Setting
	\begin{equation*}
	{\hat{c}(x, t)=\int_0^1f_u(x,t,\underline{u}(x, t)+s(\bar{u}(x, t)-\underline{u}(x, t))){\rm d}s,}
	\end{equation*}	
and $u=\bar{u}-\underline{u}$. Then $u$ satisfies
	\begin{equation*}\begin{cases}
u_{t}-{\Delta u}- \hat{c}(x,t)u(x,t)\ge 0,~~&(x,t)\in V\times(0,T],\\
	u(x,0)\ge 0,~&x\in V.
\end{cases}	\end{equation*}
Clearly, $\hat{c}(x,t)$ is bounded. It follows from Lemma \ref{l3.2} that $u(x,t)\ge 0$ for $x\in {V}$ and $t\in [0,T]$.
\end{proof}

\begin{theorem}\label{t3.2}
Assume {$D_{\mu}<\infty$}. Let $\bar u$, $\underline{u}$ be the bounded upper and lower solutions of \eqref{3.12}, respectively, and $0\le,\not\equiv \psi\in V^{\mathbb{R}}$ be given. Assume that
	\begin{equation}\label{3.14}
f(x,t,0)=0,\;\;\text{and}\;\;f(x,\cdot, u)\in C([0,T])\;\;\;\text{for}\;\;
x\in V, \;t\in[0,T],\;u\in [\underline{\eta},\bar{\eta}],
	\end{equation}
and $f$ satisfies the Lipschitz condition \eqref{3.13} in $u\in[\underline{\eta},\bar{\eta}]$, where $\underline{\eta}, \bar{\eta}$ are defined in Lemma {\rm\ref{l3.4}}.
Then \eqref{3.12} admits a unique solution $u\in C_V([0,T])\cap C^1_V((0,T])$ satisfying $\underline{u}\le u\le\bar u$ on $V\times[0,T]$.
\end{theorem}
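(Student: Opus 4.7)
The plan is a classical monotone iteration on upper and lower solutions, driven by the comparison Lemma \ref{l3.4} and using the heat-kernel representation of Theorem \ref{t3.1} to solve each linear step. Because $f$ is only Lipschitz, not monotone, in $u$, the first move is to absorb the linear term $Mu$ to create a monotone right-hand side: setting $F(x,t,u):=f(x,t,u)+Mu$, the Lipschitz condition \eqref{3.13} makes $F(x,t,\cdot)$ nondecreasing on $[\underline\eta,\bar\eta]$, and \eqref{3.12} is equivalent to the shifted problem $u_t-\Delta u+Mu=F(x,t,u)$ with $u(\cdot,0)=\psi$.

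\textbf{Linear solver and iteration.} For a bounded source $g\in L^\infty(V\times[0,T])$ define
\[
(\mathcal{S}g)(x,t)=e^{-Mt}\sum_{y\in V}P(t,x,y)\psi(y)\mu(y)+\int_0^t e^{-M(t-s)}\sum_{y\in V}P(t-s,x,y)g(y,s)\mu(y)\,{\rm d}s.
\]
Using $P_t=\Delta_xP$ (Proposition \ref{p2.2}(i)) together with the uniform convergence of the kernel sum supplied by Proposition \ref{p2.4}, a direct computation shows $\mathcal{S}g$ is bounded, belongs to $C_V([0,T])\cap C^1_V((0,T])$, and solves $u_t-\Delta u+Mu=g$, $u(\cdot,0)=\psi$; Lemma \ref{l3.2} guarantees uniqueness of this bounded solution. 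Set $\bar u^{(0)}=\bar u$, $\underline u^{(0)}=\underline u$ and
\[
\bar u^{(k+1)}=\mathcal{S}(F(\cdot,\cdot,\bar u^{(k)})),\qquad \underline u^{(k+1)}=\mathcal{S}(F(\cdot,\cdot,\underline u^{(k)})).
\]
Since $\bar u$ is also an upper solution of the shifted linear problem with source $F(\cdot,\cdot,\bar u)$, Lemma \ref{l3.2} applied to $\bar u-\bar u^{(1)}$ gives $\bar u^{(1)}\le\bar u^{(0)}$, and symmetrically $\underline u^{(1)}\ge\underline u^{(0)}$. Combining the monotonicity of $F$ with Lemma \ref{l3.2} inductively yields the sandwich
\[
\underline u\le\underline u^{(1)}\le\cdots\le\underline u^{(k)}\le\bar u^{(k)}\le\cdots\le\bar u^{(1)}\le\bar u,
\]
so every iterate takes values in $[\underline\eta,\bar\eta]$, keeping the Lipschitz hypothesis applicable at every step.

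\textbf{Passage to the limit and uniqueness.} Let $\bar u^*$ and $\underline u^*$ be the monotone pointwise limits of $\bar u^{(k)}$ and $\underline u^{(k)}$; both are bounded on $V\times[0,T]$. Because $F(\cdot,\cdot,\bar u^{(k)})$ is uniformly bounded and converges pointwise to $F(\cdot,\cdot,\bar u^*)$ (by continuity of $F$ in $u$), Proposition \ref{p2.4} licenses the interchange of the limit with the infinite sum over $y$ and with the finite $s$-integral inside $\mathcal{S}$, so passing $k\to\infty$ in $\bar u^{(k+1)}=\mathcal{S}(F(\cdot,\cdot,\bar u^{(k)}))$ gives $\bar u^*=\mathcal{S}(F(\cdot,\cdot,\bar u^*))$, and likewise $\underline u^*=\mathcal{S}(F(\cdot,\cdot,\underline u^*))$. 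Hence both $\bar u^*,\underline u^*\in C_V([0,T])\cap C^1_V((0,T])$ and solve \eqref{3.12}. Each is then simultaneously an upper and a lower solution of \eqref{3.12}, so Lemma \ref{l3.4} applied with the roles swapped to the bounded pair $(\underline u^*,\bar u^*)$ forces $\bar u^*\le\underline u^*$; combined with the sandwich this gives $\bar u^*=\underline u^*=:u$, the desired solution with $\underline u\le u\le\bar u$. A final invocation of Lemma \ref{l3.4} to any two bounded solutions yields uniqueness.

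\textbf{Main obstacle.} The technical heart lies in justifying the termwise differentiation that shows $\mathcal{S}g$ truly solves the linear equation, and the $k\to\infty$ interchange of limit with the sum $\sum_{y}\mu(y)P(t-s,x,y)F(y,s,\bar u^{(k)}(y,s))$ and the $s$-integral: Proposition \ref{p2.4} is tailored for this purpose, but one has to verify explicitly that the $F$-evaluated iterates remain within the $L^\infty$ bound implied by the sandwich so that the proposition applies uniformly in $k$. A secondary subtlety is that continuity of $u$ at $t=0$ is built into the defining property $\lim_{t\downarrow 0}\sum_{y}P(t,x,y)\psi(y)\mu(y)=\psi(x)$ of the heat kernel, so the initial condition is attained in the sense required by the definition of solution, and no additional smallness of $T$ or Gronwall argument is needed because the shift by $Mu$ has absorbed the Lipschitz constant once and for all.
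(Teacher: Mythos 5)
Your proposal is correct and follows essentially the same route as the paper: a monotone iteration between the given upper and lower solutions, where each step solves the shifted linear problem $u_t-\Delta u+Mu=f(x,t,v)+Mv$ via the heat-kernel (Duhamel) representation, with ordering supplied by Lemma \ref{l3.2} and uniqueness by Lemma \ref{l3.4}. The only notable (and harmless) deviation is at the limit stage: you pass to the limit in the fixed-point identity $u=\mathcal{S}(F(\cdot,\cdot,u))$ using dominated convergence in the kernel sum, whereas the paper passes to the limit in the integrated form of the differential equation $\bar u_k(x,t)-\bar u_k(x,t_1)=\int_{t_1}^{t}[\cdots]\,{\rm d}s$ and then bootstraps continuity and differentiability of the limit.
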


\begin{proof} {\bf Step 1}. Define
 \begin{equation*}\label{dy}
\langle\underline{u},\bar{u}\rangle=\big\{u\in C_V([0,T]):\,\underline{u}\le u \le \bar{u}\big\}.
 \end{equation*}	
Then for any given $v\in \langle\underline{u},\bar{u}\rangle$, using \eqref{3.14}, we see that
 \begin{equation}\label{3.15}
H(x,t)=f(x,t,v(x,t))+Mv(x,t)\in C_V([0,T]).
 \end{equation}
Recalling that $v\in \langle\underline{u},\bar{u}\rangle$, we know that $v$ is bounded. Thus by \eqref{3.14} and \eqref{3.15}, we know that there exists constant $C_{H}=C_{H}(T)>0$ such that
\begin{equation}\label{3.16}
|H(x,t)|\le C_{H},~\;\;(x,t)\in V\times[0,T].
\end{equation}

{\bf Step 2}. It will be shown that the linear problem
\begin{equation}\label{3.17}
\left\{\!\begin{array}{ll}
u_{t}-\Delta u+Mu=H(x,t),~&x\in V,\; t\in(0, T],\\[1.5mm]
u(x,0)=\psi(x), &x \in V
\end{array}\right.
 \end{equation}
admits a unique bounded solution $u\in C_V([0,T])\cap C^1_V((0,T])$. Let $v={\rm e}^{M t}u$, then $v$ satisfies
\begin{equation}\label{3.18}
\begin{cases}v_t-\Delta v={\rm e}^{Mt}H(x, t), &x\in V,\; t\in(0, T],\\[1mm] v(x,0)=\psi(x), &x\in V.\end{cases}
\end{equation}

(i)\, Let \begin{equation*}
w(x,t)=\int_0^t\sum_{y\in V}P(t-s,x,y){\rm e}^{Ms}H(y,s)\mu(y){\rm d}s,\;\; x\in V,\; t\in[0,T].
\end{equation*}
We shall show that $w$ satisfies
  \bes
  w_t={\rm e}^{Mt}H(x,t)+\Delta w.
  \label{3.19}\ees
For any $\delta>0$. Direct calculations yield that
 \bes
w(x,t+\delta)-w(x,t)&=&\int_t^{t+\delta}\sum_{y\in V}P(t+\delta-s,x,y){\rm e}^{M s}H(y,s)\mu(y){\rm d}s\nm\\
&&+\int_0^t\sum_{y\in V}[P(t+\delta-s,x,y)-P(t-s,x,y)]{\rm e}^{Ms}H(y,s)\mu(y){\rm d}s.\quad\;\;\label{3.20}
	\ees
Clearly, $|H(x,s){\rm e}^{Ms}|\le {\rm e}^{Mt}C_{H}$ for $x\in V$ and $s\in [0,t]$ by \eqref{3.16}. Hence, $\sum_{y\in V}P(t+\delta-s,x,y){\rm e}^{Ms} H(y,s)\mu(y)$ converges uniformly w.r.t. $s\in[0,t]$ by Proposition \ref{p2.4}, and
$\sum_{y\in V}P(t+\delta-s,x,y){\rm e}^{Ms} H(y,s)\mu(y)$ is continuous w.r.t. $s\in[0,t]$ by \eqref{3.15}. Thus, by the first mean value theorem of integrals,
 \bes
\lim_{\delta\to 0}\frac 1\delta\int_t^{t+\delta}\sum_{y\in V}P(t+\delta-s,x,y){\rm e}^{M s}H(y, s)\mu(y){\rm d}s&=&\sum_{y\in V} P(0,x,y){\rm e}^{M t}H(y,t)\mu(y)\nm\\[1mm]
 &=&{\rm e}^{Mt}H(x,t).\label{3.21}
	\ees
Recalling the definition of the heat kernel $P(t,x,y)$, it is easy to see that $P_t(t,x,y)$ is continuous in $t$. Similar to the above,
 \bes
 &&\lim_{\delta\to 0}\frac 1\delta\int_0^t\sum_{y\in V}[P(t+\delta-s, x, y)-P(t-s,x,y)]{\rm e}^{Ms}H(y, s)\mu(y){\rm d}s\nm\\
 &=&\int_0^t\sum_{y\in V}P_t(t-s,x,y){\rm e}^{Ms}H(y,s)\mu(y){\rm d}s\nm\\
 &=&\int_0^t\sum_{y\in V
 }\Delta_x P(t-s,x,y){\rm e}^{Ms}H(y,s)\mu(y){\rm d}s
 \label{3.22}\ees
by Proposition \ref{p2.2}(i).

On the other hand, in view of Proposition \ref{p2.2}(iv) and \eqref{3.16}, it can be deduced that
 \bess
&&\sum_{\substack{z\in V\\ z\sim x}}\sum_{y\in V}\left|[P(t-s,z,y)-P(t-s,x,y)]\frac{\omega_{zx}}{\mu(x)}{\rm e}^{Ms}H(y,s)\mu(y)\right|\nm\\
&\le&\sum_{\substack{z\in V\\ z\sim x}}\sum_{y\in V}P(t-s,z,y)\frac{\omega_{zx}}{\mu(x)} {\rm e}^{Ms}|H(y,s)|\mu(y)
+\sum_{\substack{z\in V\\ z\sim x}}\sum_{y\in V}P(t-s,x,y)\frac{\omega_{zx}}{\mu(x)}{\rm e}^{Ms}|H(y,s)|\mu(y)\nm\\
&\le&\sum_{\substack{z\in V\\ z\sim x}} 2C_H\frac{\omega_{z x}}{\mu(x)} {\rm e}^{Ms}\nm\\
&=&2C_H\frac{m(x)}{\mu(x)}{\rm e}^{Ms}.
 \eess
Certainly,
\bess
 \int_0^t\sum_{\substack{z\in V\\ z\sim x}}\sum_{y\in V}\left|[P(t-s,z,y)-P(t-s,x,y)]\frac{\omega_{zx}}{\mu(x)}{\rm e}^{Ms}H(y,s)\mu(y)\right|{\rm d}s\le 2C_H\frac{m(x)}{\mu(x)}\int_0^t{\rm e}^{Ms}{\rm d}s.
 \eess
Thus we have
 \bess
\sum_{y\in V}\Delta_x P(t-s,x,y){\rm e}^{Ms}H(y,s)\mu(y)
&=&\sum_{y\in V}\sum_{\substack{z\in V\\ z\sim x}}[P(t-s,z,y)-P(t-s,x,y)]{\rm e}^{M s}H(y,s)\mu(y)\frac{\omega_{z x}}{\mu(x)}\nm\\
		&=&\sum_{\substack{z\in V\\ z\sim x}}\sum_{y\in V}[P(t-s,z,y)-P(t-s,x,y)]{\rm e}^{M s}H(y,s)\mu(y)\frac{\omega_{z x}}{\mu(x)},
	\eess
and
 \bes
&&\int_0^t\sum_{y\in V}\Delta_x P(t-s,x,y){\rm e}^{Ms}H(y,s)\mu(y)\ds\nm\\
&=&\sum_{\substack{z\in V\\ z\sim x}}\int_0^t\sum_{y\in V}[P(t-s,z,y)-P(t-s,x,y)]{\rm e}^{M s}H(y,s)\mu(y)\frac{\omega_{z x}}{\mu(x)}\ds\nm\\
&=&\Delta_x \int_0^t\sum_{y\in V}P(t-s,x,y){\rm e}^{Ms}H(y,s)\mu(y)\ds\nm\\
&=& \Delta w(x,t).
 \label{3.23}\ees
It follows from \eqref{3.20}-\eqref{3.23} that \eqref{3.19} holds.

(ii)\, Set
  $$v=\sum_{y\in V} P(t,x,y)\psi(y)\mu(y)+w.$$
Then $v$ satisfies
\bess
	v_t=\Delta v+{\rm e}^{Mt}H(x,t),\quad x\in V,\; t\in(0,T]
\eess
and $v(x,0)=\psi(x), x\in V$. That is, $v$ is a solution to \eqref{3.18}. So, $u={\rm e}^{-M t}v$ is a bounded solution of \eqref{3.17}. The uniqueness of bounded solutions of \eqref{3.17} is derived by Lemma \ref{l3.2}.
	
{\bf Step 3}. Define $\mathscr{F}:\langle\underline{u},\bar{u}\rangle\to C_V([0,T])$ by $u=\mathscr{F}(v)$ and
	\begin{equation*} \underline{u}_0=\underline{u},~\underline{u}_k=\mathscr{F}(\underline{u}_{k-1}),~ \bar{u}_0=\bar{u},~\bar{u}_k=\mathscr{F}(\bar{u}_{k-1}),~k=1,2,\cdots.
	\end{equation*}
We first prove that ${\mathscr F}$ is monotonically increasing, i.e., $u_1, u_2\in\langle\underline{u},\bar{u}\rangle$ with $u_1\le u_2$ implies ${\mathscr F}(u_1)\leq {\mathscr F}(u_2)$. Let $w_i=\mathscr{F}(u_i)$, and $w=w_2-w_1$. By virtue of \eqref{3.15}, we see that $w$ satisfies
 \begin{equation*}
 \left\{\begin{array}{ll}
w_{t}-\Delta w+Mw=f(x,t,u_2)-f(x,t,u_1)+M(u_2-u_1)\ge 0,\;&(x,t)\in V\times(0, T],\\[1.5mm]
w(x,0)=0,~&x\in V.
\end{array}\right.
	\end{equation*}
By Lemma \ref{l3.2}, $w\ge 0$, i.e., $w_1\le w_2$. Then we claim that $\underline{u}\le\underline{u}_1$. In fact, set $w=\underline{u}_1-\underline{u}$. Then $w$ satisfies
	\begin{equation*}
\left\{\!\!\begin{array}{ll}
w_{t}-\Delta w+Mw\ge 0,\;&(x,t)\in V\times(0, T],\\[1mm]
w(x,0)=0,&x \in V,
\end{array}\right.
	\end{equation*}
and by Lemma \ref{l3.2}	we get $w\ge 0$, i.e., $\underline{u}\le\underline{u}_1$. Similarly, $\bar{u}_1\le\bar{u}$. Using these facts and the inductive process we can show  that
 \bess
\underline{u}\le\underline{u}_1\le\cdots \le\underline{u}_k\le \bar{u}_k\le\cdots\le\bar{u}_1\le\bar{u},\;\;\forall\; k\ge 1.\eess

{\bf Step 4}. Now, we may define $\tilde{u}=\lim\limits_{k\to \infty}\underline{u}_k$ and $\hat{u}=\lim\limits_{k\to \infty}\bar{u}_k$.
Then $\tilde{u}, \hat u\in\langle\underline{u},\bar{u}\rangle$.	We shall show that $\tilde{u}, \hat u$ are solutions of \eqref{3.12}.

For any given $x\in V$ and any $(t_1,t)\subset(0,T]$, it follows from $\bar{u}_k=\mathscr{F}(\bar{u}_{k-1})$ that
	\begin{equation*}
\bar{u}_k(x,t)-\bar{u}_k(x,t_1)=\int_{t_1}^{t}\left[\Delta \bar{u}_k(x,s)-M\bar{u}_k(x,s)+f(x,s,\bar{u}_{k-1}(x,s))
+M\bar{u}_{k-1}(x,s)\right]\ds.
	\end{equation*}
Noticing that $\Delta \bar{u}_k-M\bar{u}_k+f(\cdot,\cdot,\bar{u}_{k-1})
+M\bar{u}_{k-1}$ is bounded. In view of the Lebesgue dominated convergence theorem, it follows by letting $k\to \infty$ in above equation that
 \bes\label{3.24}
\hat{u}(x,t)-\hat{u}(x,t_1)&=&\int_{t_1}^{t} \left[\Delta\hat{u}(x,s)-M\hat{u}(x,s)+f(x,s,\hat{u}(x,s))+M\hat{u}(x,s)\right]\ds \nm\\
&=&\int_{t_1}^{t}[\Delta\hat{u}(x,s)+f(x,s,\hat{u}(x,s)]\ds.	
	\ees
As $\hat{u}$ is bounded, there exist constants $C_1, C_2>0$ such that
\begin{equation*}
	\left| \Delta\hat{u}(x,s)+f(x,s,\hat{u}(x,s))\right| \le 2 C_1\frac{m(x)}{\mu(x)}+ C_2\le 2C_1D_{\mu}+C_2,\;\;\forall\; t_1\le s\le T.
\end{equation*}
This combines with \eqref{3.24} shows that $\hat{u}(x,t)$ is continuous with respect to  $t\in[t_1,T]$. Thus, by \eqref{3.24} and the arbitrariness of $t_1$ and $t$, we have that
 \begin{equation*}
 \hat{u}_{t}(x,t)-\Delta\hat{u}(x,t)=f(x,t,\hat{u}(x,t)),\;\;t\in(0,T].
\end{equation*}
Since $\bar{u}_k(x,0)=\psi(x)$, it is obvious that $\hat{u}(x, 0)=\psi(x)$. Therefore, $\hat{u}\in C_V([0,T])\cap C^1_V((0,T])$ solves \eqref{3.12}. Similarly, $\tilde{u}\in C_V([0,T])\cap C^1_V((0,T])$ solves \eqref{3.12}.
	
{\bf Step 5}. The uniqueness of solutions of \eqref{3.12} locating in  $\langle\underline{u},\bar{u}\rangle$ can be deduced by Lemma \ref{l3.4}.
\end{proof}

\section {Proofs of Theorems \ref{t2.1}-\ref{t2.3}}

\subsection{Proof of Theorem \ref{t2.1}}

The proof of Theorem \ref{t2.1} consists of the following lemmas.

\begin{lemma}\label{l4.1} Let $\Omega\subset V$ be finite and $\psi(x)\not\equiv 0$ on $\Omega$. If $u$ is the global solution of the problem \eqref{1.6}, then
 \begin{equation}\label{4.1}	\int_{\Omega}\lambda\psi\phi{\rm d}\mu\le\lambda_1^{\frac1{p-1}},
\end{equation}
where $\lambda_1$ is the smallest eigenvalue of the eigenvalue problem \eqref{2.1} and $\phi>0$ is the normalized eigenfunction corresponding to $\lambda_1$.
 \end{lemma}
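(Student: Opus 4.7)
The plan is to implement Kaplan's first eigenvalue method in the graph setting. Introduce the functional
\[
F(t)=\int_{\Omega}u(\cdot,t)\phi\,{\rm d}\mu=\sum_{x\in\Omega^{\circ}}u(x,t)\phi(x)\mu(x),
\]
where $\phi>0$ on $\Omega^{\circ}$, $\phi=0$ on $\partial\Omega$, and $\sum_{\Omega}\phi\,\mu=1$. Extend $\phi$ to all of $V$ by $\tilde\phi=0$ on $V\setminus\Omega^{\circ}$. Since $\Omega$ is finite, $F(t)$ is a finite sum and we may differentiate termwise. Using \eqref{1.6}, I will obtain
\[
F'(t)=\sum_{x\in\Omega^{\circ}}\phi(x)\Delta u(x,t)\mu(x)+\sum_{x\in\Omega^{\circ}}\phi(x)u^{p}(x,t)\mu(x).
\]

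Next I would handle each of the two pieces. For the diffusion term, because $\tilde\phi$ has finite support, the standard summation-by-parts identity
\[
\sum_{x\in V}\mu(x)\tilde\phi(x)\Delta u(x,t)=\sum_{x\in V}\mu(x)u(x,t)\Delta\tilde\phi(x)
\]
is legitimate (only finitely many terms are nonzero after reindexing by $\omega_{xy}=\omega_{yx}$). On $\Omega^{\circ}$ we have $\Delta\tilde\phi=\Delta_{\Omega}\phi=-\lambda_{1}\phi$, while on $V\setminus\Omega^{\circ}$ we have $\tilde\phi(x)=0$ and $\tilde\phi\ge0$ elsewhere, so $\Delta\tilde\phi(x)\ge0$. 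Combined with $u\ge0$, this yields the one-sided inequality
\[
\sum_{x\in\Omega^{\circ}}\phi(x)\Delta u(x,t)\mu(x)\ge -\lambda_{1}F(t).
\]
For the reaction term, since $\phi(x)\mu(x)$ is a probability measure on $\Omega^{\circ}$ and $s\mapsto s^{p}$ is convex, Jensen's inequality gives
\[
\sum_{x\in\Omega^{\circ}}\phi(x)u^{p}(x,t)\mu(x)\ge F(t)^{p}.
\]
Adding the two bounds delivers the Kaplan differential inequality
\[
F'(t)\ge F(t)^{p}-\lambda_{1}F(t)=F(t)\bigl[F(t)^{p-1}-\lambda_{1}\bigr].
\]

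The conclusion follows by contradiction. Suppose $F(0)=\int_{\Omega}\lambda\psi\phi\,{\rm d}\mu>\lambda_{1}^{1/(p-1)}$. Then $F'(0)>0$ and, since the right-hand side of the inequality is positive whenever $F>\lambda_{1}^{1/(p-1)}$, the function $F$ remains $\ge F(0)$ for all $t\ge0$. Writing $c_{0}:=1-\lambda_{1}F(0)^{-(p-1)}>0$, this gives $F'(t)\ge c_{0}F(t)^{p}$, a Bernoulli ODE whose solutions blow up at some finite $T^{\star}<\infty$. But for any $T'<\infty$ the global solution $u$ satisfies $u\in L^{\infty}(V\times[0,T'])$, so $F(t)\le\|u(\cdot,t)\|_{\ell^{\infty}(V)}$ is finite at $T^{\star}$, a contradiction. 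Hence $F(0)\le\lambda_{1}^{1/(p-1)}$, which is \eqref{4.1}.

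The main technical point is not the ODE argument but the transfer-of-Laplacian step: one must make sure the summation by parts is literally a manipulation of a finite sum (which it is, thanks to the compact support of $\tilde\phi$) and then carefully account for the sign of $\Delta\tilde\phi$ at vertices in $\partial\Omega$ and outside $\Omega$, where positivity of $\tilde\phi$ at neighbours combined with $\tilde\phi=0$ at the vertex itself turns the eigenvalue identity into the desired inequality $\ge-\lambda_{1}F(t)$.
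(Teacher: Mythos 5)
Your proof is correct and follows essentially the same route as the paper: Kaplan's first eigenvalue method, yielding the differential inequality $F'\ge F^{p}-\lambda_{1}F$ and a finite-time blow-up of $F$ contradicting global boundedness. The only (immaterial) difference is in how the Laplacian is transferred to the eigenfunction — you sum by parts globally against $\tilde\phi$ and use $\Delta\tilde\phi\ge 0$ off $\Omega^{\circ}$, whereas the paper truncates $u$ to $\Omega$, invokes monotonicity of $\Delta$ and then self-adjointness of $\Delta_{\Omega}$ — and in the final ODE step, where you compare with a Bernoulli equation while the paper integrates explicitly via $\xi=\eta\,{\rm e}^{\lambda_{1}t}$.
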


\begin{proof} {\bf Step 1}. Let
  \begin{equation}\label{4.2}
	\eta(t)=\int_{\Omega}u(\cdot,t)\phi(\cdot){\rm d}\mu=\sum_{x\in\Omega}u(x,t)\phi(x)\mu(x),~t\ge 0.
	\end{equation} Then
	\begin{equation}\label{4.3}
	\eta'(t)=\int_{\Omega}u_{t}(\cdot,t)\phi(\cdot){\rm d}\mu=\int_{\Omega} \left[\Delta u(\cdot,t)+u^{p}(\cdot,t)\right] \phi(\cdot){\rm d}\mu,~t>0.
	\end{equation}
For clarity and simplicity, we denote $u_\Omega=u|_{\Omega}$ in the following.

{\bf Step 2}. We claim that
 \begin{equation}\label{4.4}
\phi(x)\Delta u(x,t)\ge\phi(x)\Delta_{\Omega}(u_{\Omega})(x,t),~~x\in\Omega,\; t\ge0.
 \end{equation}
In fact, since
 \bess
u(y,t)\left\{\begin{array}{ll}
\ge 0=\widetilde{u_{\Omega}}(y,t)~&\text{when}~y\not\in\Omega,\\[2mm]
=\widetilde{u_{\Omega}}(y,t)~&\text{when}~y\in\Omega,
	\end{array}\right.
 \eess
and $\phi(x)>0$ on $\Omega$, we have that, for all $x\in\Omega$,
\begin{equation*}
	\label{th4.1:4}
	\begin{split}
\phi(x)\Delta u(x,t)&=\phi(x)\sum_{y\in V:\,y\sim x}\left[u(y,t)-u(x,t)\right]\frac{\omega_{yx}}{\mu(x)}\\
&\ge\phi(x)\sum_{y\in V:\,y\sim x}\left[\widetilde{u_{\Omega}}(y,t)-\widetilde{u_{\Omega}}(x,t)\right]
\frac{\omega_{yx}}{\mu(x)}\\
&=\phi(x)\Delta_{\Omega}u_{\Omega}(x,t).
	\end{split}
\end{equation*}

{\bf Step 3}. Noticing that $\Delta_{\Omega}$ is self-adjoint and $\int_{\Omega} \phi {\rm d}\mu=1$. In view of \eqref{4.3}, \eqref{4.4} and Jensen's inequality, it deduces that
\begin{equation*}\begin{split}
	\eta'(t)&\geq \int_{\Omega}\phi(\cdot)\Delta_{\Omega}u_{\Omega}(\cdot,t){\rm d}\mu +\int_{\Omega}\phi(\cdot)u^{p}(\cdot,t){\rm d}\mu\\
	&=\int_{\Omega}(u_{\Omega})(\cdot,t)\Delta_{\Omega}\phi(\cdot){\rm d}\mu+\int_{\Omega} \phi(\cdot)u^{p}(\cdot,t){\rm d}\mu\\
	&=-\lambda_1\int_{\Omega}u(\cdot,t)\phi(\cdot){\rm d}\mu+\int_{\Omega}u^{p}(\cdot,t)\phi(\cdot){\rm d}\mu\\
	&\ge -\lambda_1\int_{\Omega}u(\cdot,t)\phi(\cdot){\rm d}\mu+\left(\int_{\Omega}u(\cdot,t)\phi(\cdot){\rm d}\mu\right)^{p}\\
	&=-\lambda_1\eta(t)+\eta^{p}(t),~t>0.
	\end{split}\end{equation*}
Set $\xi(t)=\eta(t){\rm e}^{\lambda_1t}$. Then
\bes\label{4.5}
	\xi'(t)&=&{\rm e}^{\lambda_1t}\left(\eta'(t)+\lambda_1\eta(t)\right)\nm\\[1mm]
	&\ge& {\rm e}^{\lambda_1t}\eta^{p}(t) \nm\\[1mm]
	&=&{\rm e}^{\lambda_1t}{\rm e}^{-\lambda_1pt}\xi^{p}(t) \nm\\[1mm]
	 &=&{\rm e}^{(1-p)\lambda_1t}\xi^{p}(t),~t>0.
\ees
Since $\lambda\psi(x)\ge,\not\equiv 0$ on $V$, we conclude that
$\eta(0)= \int_{\Omega}\lambda\psi\phi{\rm d}\mu>0$,
and hence $\xi(0)=\eta(0)>0$. Thus, by \eqref{4.5}, $\xi(t)>0$ for $t>0$. This combined with \eqref{4.5} allows us to deduce that $\frac{\xi'(t)}{\xi^{p}(t)}\ge{\rm e}^{(1-p)\lambda_1t}$, which implies
\begin{equation}\label{4.6}
0<\xi^{1-p}(t)\le\xi^{1-p}(0)+\frac{{\rm e}^{(1-p)\lambda_1t}-1}{\lambda_1}.
\end{equation}

{\bf Step 4}. Suppose by way of contradiction that
 \begin{equation*}
\lambda\int_{\Omega}\psi\phi{\rm d}\mu>\lambda_1^{\frac1{p-1}}.
 \end{equation*}
Since $\xi(0)=\eta(0)=\lambda\int_{\Omega}\psi\phi {\rm d}\mu>0$,
this implies that $1-\lambda_1\xi^{1-p}(0)>0$. Thus $$t_{*}=\frac{\text{ln}[1-\lambda_1\xi^{1-p}(0)]}{(1-p)\lambda_1}>0.$$
Since \begin{equation*}
	\lim\limits_{t\to t_*^{+}}\left(\xi^{1-p}(0)+\frac{{\rm e}^{(1-p)\lambda_1 t}-1}{\lambda_1}\rr)=0,
\end{equation*}
it is derived by \eqref{4.6} that $\lim_{t\to t_*^{+}}\xi(t)=\infty$,
and hence $\lim\limits_{t\to t_*^{+}}\eta(t)=\lim\limits_{t\to t_*^{+}}\xi(t){\rm e}^{-\lambda_1t}=\infty$.

On the other hand,
 \bess
\eta(t)\leq\int_{\Omega}\|u(\cdot,t)\|_{\ell^{\infty}(V)}\phi(x){\rm d}\mu
 =\|u(\cdot,t)\|_{\ell^{\infty}(V)},\;~t\ge 0,
	\eess
this implies $\lim_{t\to t_*^{+}}\|u(\cdot,t)\|_{\ell^{\infty}(V)}=\infty$.
This is a contradiction as $u$ is a global solution. Therefore, \eqref{4.1} holds.
\end{proof}

\begin{lemma}\label{l4.2} Suppose $D_{\mu}<+\infty$. Let $t_i>0$, $u_i\in C_V([0, t_i))\cap C^1_V((0,t_i))$ and $u_i\in L^\infty(V\times[0, t'_i])$ for any given  $0<t'_{i}<t_{i}$,~$i=1,2$. Suppose that $\lim\limits_{t\nearrow  t_i}\|u_{i}(\cdot,t)\|_{\ell^{\infty}(V)}=\infty$, $i=1,2$.
If	
 \begin{equation*}\left\{\begin{array}{ll}
u_{1t}-\Delta u_1\geq u_1^p,\;\;&(x,t)\in V\times[0, t_1),\\[1mm]
u_1(x, 0)\geq\psi(x),&x\in V,
		\end{array}\right.
	\end{equation*}
and
	\begin{equation*}\left\{\begin{array}{ll}
	u_{2t}-\Delta u_2\leq u_2^p,\;\;&(x,t)\in V\times[0, t_2),\\[1mm]
	u_2(x, 0)\leq\psi(x),~&x\in V,
		\end{array}\right.
	\end{equation*}
 then $t_1\le t_2$.
\end{lemma}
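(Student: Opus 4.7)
The strategy is to combine the ordering result for upper and lower solutions (Lemma \ref{l3.4}) with the blow-up hypotheses on $u_1$ and $u_2$ to derive a contradiction if $t_1>t_2$. The plan is to first establish the pointwise comparison $u_1\ge u_2$ on the common time interval of boundedness, and then argue that the prescribed blow-up of $u_2$ at $t_2$ would force $u_1$ to blow up at or before $t_2$, clashing with the $L^\infty$ hypothesis on $u_1$.

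First I would fix an arbitrary $T\in(0,\min\{t_1,t_2\})$. By the hypothesis $u_i\in L^\infty(V\times[0,t_i'])$ for every $t_i'<t_i$, both $u_1$ and $u_2$ belong to $L^\infty(V\times[0,T])$; in particular
\begin{equation*}
\underline{\eta}=\inf_{V\times[0,T]}\min\{u_1,u_2\},\qquad \bar{\eta}=\sup_{V\times[0,T]}\max\{u_1,u_2\}
\end{equation*}
are finite. On the interval $[\underline{\eta},\bar{\eta}]$ the map $u\mapsto u^p$ is locally Lipschitz (with a constant bounded by $p\,\bar{\eta}^{p-1}$, using that solutions are nonnegative by construction), so $f(x,t,u)=u^p$ satisfies the Lipschitz requirement of Lemma \ref{l3.4} on this range. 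With $u_1$ serving as a bounded upper solution and $u_2$ as a bounded lower solution of the IVP with initial datum $\psi$ on $V\times[0,T]$, Lemma \ref{l3.4} applies and yields $u_1\ge u_2$ on $V\times[0,T]$. Since $T<\min\{t_1,t_2\}$ was arbitrary, I conclude
\begin{equation*}
u_1(x,t)\ge u_2(x,t),\qquad (x,t)\in V\times[0,\min\{t_1,t_2\}).
\end{equation*}

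Now suppose, for contradiction, that $t_1>t_2$. Then $\min\{t_1,t_2\}=t_2$, and taking the $\ell^\infty(V)$ norm in the inequality above gives $\|u_1(\cdot,t)\|_{\ell^\infty(V)}\ge\|u_2(\cdot,t)\|_{\ell^\infty(V)}$ for all $t\in[0,t_2)$. Letting $t\nearrow t_2$ and invoking $\lim_{t\nearrow t_2}\|u_2(\cdot,t)\|_{\ell^\infty(V)}=\infty$ forces $\limsup_{t\nearrow t_2}\|u_1(\cdot,t)\|_{\ell^\infty(V)}=\infty$. However, since $t_2<t_1$, the choice $t_1'=t_2$ is admissible in the hypothesis on $u_1$, so $u_1\in L^\infty(V\times[0,t_2])$ and therefore $\sup_{t\in[0,t_2]}\|u_1(\cdot,t)\|_{\ell^\infty(V)}<\infty$. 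This contradicts the blow-up just deduced, and hence $t_1\le t_2$.

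The only mild obstacle is checking that Lemma \ref{l3.4} is genuinely applicable on every slab $V\times[0,T]$ with $T<\min\{t_1,t_2\}$; this reduces to the observation that on such a slab both $u_1$ and $u_2$ are uniformly bounded (straight from the hypothesis on the blow-up times), so the nonlinearity $u^p$ behaves as a Lipschitz function and the comparison in Lemma \ref{l3.4} goes through. Once the comparison is available on every strictly smaller time window, the conclusion is a direct consequence of the definition of the blow-up time.
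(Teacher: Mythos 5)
Your proposal is correct and follows essentially the same route as the paper: assume $t_1>t_2$, establish $u_1\ge u_2$ on $V\times[0,t_2)$ by the comparison machinery (the paper cites Lemma \ref{l3.2} directly, while you route through Lemma \ref{l3.4}, which is itself a packaging of Lemma \ref{l3.2} via the same Lipschitz linearization of $u^p$ on the bounded range), and then derive a contradiction from the blow-up of $u_2$ at $t_2$ against the boundedness of $u_1$ on $[0,t_2]$. Your version merely spells out the slab-by-slab verification of the hypotheses that the paper leaves implicit.
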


 \begin{proof} Suppose by way of contradiction that $t_1>t_2$. Then by Lemma \ref{l3.2},
	\begin{equation*}
		u_1\left(x,t\right)\geq u_2\left(x,t\right),\;\;\;(x,t)\in V\times[0, t_2).
	\end{equation*}
Since $\lim\limits_{t\nearrow t_2}\|u_2(\cdot,t)\|_{\ell^{\infty}(V)}=\infty$, it follows that $\lim\limits_{t\nearrow t_2}\|u_1(\cdot,t)\|_{\ell^{\infty}(V)}=\infty$. This is a contradiction.
\end{proof}

\begin{lemma}\label{l4.3} Assume $D_{\mu}<+\infty$. There exists a constant $\Lambda=\Lambda(p,\psi,V)$ such that when $\lambda>\Lambda$, there hold $T_\lambda <\infty$ and
  \begin{equation}\label{4.7}
\lim_{\lambda\to \infty}\lambda^{p-1}T_\lambda =\frac1{(p-1)\|\psi\|_{\ell^{\infty}(V)}^{p-1}}.
	\end{equation}
\end{lemma}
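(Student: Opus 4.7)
The plan is to sandwich $\lambda^{p-1}T_\lambda$ between two quantities that both tend to $\frac{1}{(p-1)\|\psi\|_{\ell^\infty(V)}^{p-1}}$ as $\lambda\to\infty$. For the lower bound on $T_\lambda$, I would introduce the spatially constant function $v(t)$ solving the scalar ODE $v'=v^p$ with $v(0)=\lambda\|\psi\|_{\ell^\infty(V)}$, which blows up at $T^\ast:=\frac{1}{(p-1)(\lambda\|\psi\|_{\ell^\infty(V)})^{p-1}}$. Because $\Delta v\equiv 0$ and $v(0)\ge\lambda\psi(x)$, the function $v$ is a bounded upper solution of \eqref{1.6} on $V\times[0,T]$ for every $T<T^\ast$, while $u$ is a lower solution; applying Lemma \ref{l3.4} on any $[0,T]$ with $T<\min\{T_\lambda,T^\ast\}$ (where $f(u)=u^p$ is Lipschitz on the relevant bounded range) yields $u\le v$. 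Hence $u$ stays bounded up to $T^\ast$, forcing $T_\lambda\ge T^\ast$, and therefore $\liminf_{\lambda\to\infty}\lambda^{p-1}T_\lambda\ge\frac{1}{(p-1)\|\psi\|_{\ell^\infty(V)}^{p-1}}$.

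For the matching upper bound I would freeze the solution at a single vertex $x_0\in V$ and reduce \eqref{1.6} to a scalar differential inequality. Writing $\eta(t)=u(x_0,t)$ and $\hat\lambda_1:=m(x_0)/\mu(x_0)\le D_\mu$, the nonnegativity of $u$ gives $\Delta u(x_0,t)=\mu(x_0)^{-1}\sum_{y\sim x_0}\omega_{yx_0}(u(y,t)-u(x_0,t))\ge-\hat\lambda_1\eta(t)$, so \eqref{1.6} implies
\[
\eta'(t)\ge -\hat\lambda_1\eta(t)+\eta(t)^p,\qquad \eta(0)=\lambda\psi(x_0).
\]
Setting $\xi(t)=\eta(t)e^{\hat\lambda_1 t}$ and integrating exactly as in \eqref{4.5}-\eqref{4.6} produces $\xi^{1-p}(t)\le(\lambda\psi(x_0))^{1-p}+\hat\lambda_1^{-1}(e^{(1-p)\hat\lambda_1 t}-1)$. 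Once $\lambda>\hat\lambda_1^{1/(p-1)}/\psi(x_0)$, the right-hand side reaches zero at
\[
t_\ast(\lambda)=\frac{1}{(p-1)\hat\lambda_1}\ln\frac{1}{1-\hat\lambda_1(\lambda\psi(x_0))^{-(p-1)}},
\]
forcing $u(x_0,\cdot)$, and hence $\|u(\cdot,t)\|_{\ell^\infty(V)}$, to blow up strictly before $t_\ast(\lambda)$. Thus $T_\lambda\le t_\ast(\lambda)<\infty$, and fixing any $x_0$ with $\psi(x_0)>0$ (which exists since $\psi\not\equiv 0$) supplies the required threshold $\Lambda$. Expanding $\ln\frac{1}{1-y}=y+O(y^2)$ as $\lambda\to\infty$ gives $\lambda^{p-1}t_\ast(\lambda)\to\frac{1}{(p-1)\psi(x_0)^{p-1}}$, so $\limsup_{\lambda\to\infty}\lambda^{p-1}T_\lambda\le\frac{1}{(p-1)\psi(x_0)^{p-1}}$; selecting a sequence $x_n\in V$ with $\psi(x_n)\to\|\psi\|_{\ell^\infty(V)}$ and letting $n\to\infty$ delivers $\limsup_{\lambda\to\infty}\lambda^{p-1}T_\lambda\le\frac{1}{(p-1)\|\psi\|_{\ell^\infty(V)}^{p-1}}$, which combined with the lower bound proves \eqref{4.7}.

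The main obstacle I anticipate is that $\|\psi\|_{\ell^\infty(V)}$ need not be attained anywhere on $V$, so one cannot pick a single "maximiser" and quote Lemma \ref{l4.1} directly; the reduction to a one-vertex inequality via the elementary estimate $\Delta u(x_0,t)\ge-\hat\lambda_1 u(x_0,t)$ circumvents this by providing, for \emph{every} $x_0\in V$, an explicit lifespan upper bound depending only on $\psi(x_0)$, after which taking the supremum in $x_0$ recovers the sharp constant. A secondary technical nuisance is that Lemma \ref{l3.4} demands bounded comparators and a Lipschitz nonlinearity, which requires the comparison with $v$ to be carried out on intervals $[0,T]$ strictly below $\min\{T_\lambda,T^\ast\}$ and the inequality $T_\lambda\ge T^\ast$ to be recovered by passing to the limit $T\to T^\ast$.
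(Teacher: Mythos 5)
Your proposal is correct, and its overall strategy (sandwiching $\lambda^{p-1}T_\lambda$, with the explicit ODE solution giving the lower bound) is the paper's; the lower-bound half is essentially verbatim the paper's Step 1, modulo the fact that the paper invokes Lemma \ref{l4.2} directly while you invoke Lemma \ref{l3.4} plus a continuation argument, so you do need to use (as the paper does implicitly after \eqref{1.8}) that a solution bounded on $[0,T_\lambda)$ with $T_\lambda<\infty$ could be extended via Theorem \ref{t3.2}. Where you genuinely diverge is the upper bound: the paper runs Kaplan's eigenvalue method, i.e.\ Lemma \ref{l4.1} and Steps 2--4, testing the equation against the first Dirichlet eigenfunction of a finite subset $\Omega$ to get $\eta'\ge\eta^p-\lambda_1(\Omega)\eta$, and only at the very end specializes $\Omega=\{\tilde x\}$; you instead evaluate the equation at a single vertex $x_0$ and use the elementary bound $\Delta u(x_0,t)\ge-\frac{m(x_0)}{\mu(x_0)}u(x_0,t)$, which is legitimate since $u\ge0$, arriving at the same inequality with $\hat\lambda_1=m(x_0)/\mu(x_0)$ in place of $\lambda_1(\Omega)$ --- and indeed this is numerically what $\lambda_1(\{\tilde x\})$ is. Your route buys three things: it avoids the eigenvalue machinery entirely (and sidesteps the awkward point that for a singleton $\Omega$ in an infinite graph the paper's convention gives $\Omega^\circ=\emptyset$, so \eqref{2.1} for $\Omega=\{\tilde x\}$ requires a charitable reading); it proves $T_\lambda<\infty$ for $\lambda>\Lambda$ and the $\limsup$ bound in one computation rather than splitting them between Lemma \ref{l4.1} and Steps 2--4; and your substitution $\xi=\eta e^{\hat\lambda_1 t}$ integrates cleanly without the paper's separate argument that $\eta'>0$ up to $T_\lambda$. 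What the paper's more general $\Omega$ buys is reusability --- the same Lemma \ref{l4.1} is recycled in Lemma \ref{l4.7} and Section 5, where a genuine multi-vertex $\Omega$ is needed --- but for the present lemma your single-vertex reduction is sufficient and arguably tidier. The final passage to $\|\psi\|_{\ell^\infty(V)}$ by taking the infimum over $x_0$ of the $x_0$-dependent bounds on the fixed number $\limsup_{\lambda\to\infty}\lambda^{p-1}T_\lambda$ is exactly the paper's ``arbitrariness of $\tilde x$'' step and is sound.
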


\begin{proof} {\bf Step 1}. As $\psi$ satisfies \eqref{1.7}, we can find a finite subset  $\Omega\subset V$ such that $\psi(x)\not\equiv 0$ on $\Omega$. By Lemma \ref{l4.1}, we see that if
	\begin{equation}\label{4.8}
		\lambda>\lambda_1^{\frac1{p-1}}\left(\int_{\Omega}\psi\phi {\rm d}\mu\right)^{-1},
	\end{equation}
then $T_\lambda <\infty$, where $\lambda_1$ and $\phi$ are given in Lemma \ref{l4.1}.

Now we suppose that $\lambda>\lambda_1^{\frac1{p-1}}\left(\int_{\Omega}\psi\phi {\rm d}\mu\right)^{-1}$. It is easy to check that for any given $0<T_0<\frac{\left(\lambda\|\psi\|_{\ell^{\infty}(V)}\right)^{-(p-1)}}{p-1}$, the function
\begin{equation*}
\bar{u}(t)=\left[\left(\lambda\|\psi\|_{\ell^{\infty}(V)}\right)^{-(p-1)}-(p-1)t\right]^{-\frac1{p-1}}
\end{equation*}
is an upper solution of \eqref{1.6} in $V\times[0, T_0]$. Thus, by Lemma \ref{l4.2},
 \begin{equation}\label{4.9} T_\lambda\ge\frac{\left(\lambda\|\psi\|_{\ell^{\infty}(V)}\right)^{-(p-1)}}{p-1},
\;\;\;{\rm i.e.}, \;\;\lambda^{p-1}T_\lambda \ge\frac1{(p-1)\|\psi\|_{\ell^{\infty}(V)}^{p-1}}.
\end{equation}

{\bf Step 2}. Let us recall that the function $\eta(t)=\int_{\Omega}u(\cdot,t)\phi(\cdot){\rm d}\mu$, defined by \eqref{4.2}, is continuous in $[0,T_\lambda )$ and satisfies
 \begin{equation}\label{4.10}
 	\eta'(t)\geq\eta^p(t)-\lambda_1 \eta(t),\;\; 0<t<T_\lambda;\;\;\;
 	\eta(0)=\lambda\int_{\Omega}\psi\phi {\rm d}\mu>0.
 \end{equation}
Define
 $$\eta'(0)=\int_{\Omega}(\Delta u+u^{p})(\cdot,0)\phi(\cdot){\rm d}\mu.$$
Noticing that $u\in C_V([0, T_\lambda))$ and
 $$\eta'(t)=\int_{\Omega}(\Delta u+u^{p})(\cdot,t)\phi(\cdot){\rm d}\mu,\quad 0<t<T_\lambda .$$
Using the expression of $\Delta u(x,t)$ we can see $\lim\limits_{t\to 0}\eta'(t)=\eta'(0)$.
Hence, $\eta'(t)$ is continuous in $[0,T_\lambda )$.
Let $t\to 0^+$ in the first equation of \eqref{4.10} to derive $\eta'(0)\ge\eta^p(0)-\lambda_1 \eta(0)$. Thanks to \eqref{4.8},
  \begin{equation}\label{4.12}
  	\eta(0)=\int_{\Omega}u(x,0)\phi(x){\rm d}\mu>\lambda_1^{\frac1{p-1}},
  \end{equation}
which implies $\eta^p(0)-\lambda_1 \eta(0)>0$, and $\eta'(0)>0$.
There exists $\delta\in (0,T_\lambda )$ such that $\eta(t)>0$ and $\eta'(t)>0$ for $0<t<\delta$. Define
 \bess
	T=\sup\left\{0<\delta<T_\lambda :\, \eta(t)>0, \eta'(t)>0~\text{for}~0<t<\delta\right\} .
\eess

{\bf Step 3}. We claim that $T=T_\lambda $. Suppose by way of contradiction that $T<T_\lambda $. By the definition of $T$, we see that $\eta'(t)>0$ for $0<t<T$, which implies $\eta(T)>\eta(0)>\lambda_1^{\frac1{p-1}}>0$
by \eqref{4.12}. This combines with \eqref{4.10} indicates that $\eta'(T)\geq\eta^p(T)-\lambda_1\eta(T)>0$.
This is impossible as we have assumed $T<T_\lambda $. Therefore, $T=T_\lambda $.

{\bf Step 4}. It follows from Step 3 that $\eta'(t)>0$ and $\eta(t)>0$ for $t\in[0,T_\lambda)$. This combined with \eqref{4.12} allows us to derive
$\eta(t)>\eta(0)>\lambda_1^{\frac1{p-1}}$ for $t\in (0,T_\lambda)$.
For any given $t\in (0,T_\lambda )$. According to \eqref{4.10},
 \bess
 t&\leq&\int_{\eta(0)}^{\eta\left(t\right)}\frac{{\rm d}y}{y^p-\lambda_1y}\nm\\ &=&\left.\frac1{\lambda_1(p-1)}\ln\frac{y^{p-1}-\lambda_1}{y^{p-1}}
 \right|_{\eta(0)}^{\eta\left(t\right)}\nm\\[1mm]
 &=&\frac1{\lambda_1(p-1)}\left(\ln\frac{\eta^{p-1}\left(t\right)-\lambda_1}
 {\eta^{p-1}\left(t\right)}-\ln\frac{\eta^{p-1}(0)-\lambda_1}
 {\eta^{p-1}(0)}\right)\\[1mm]
 &\leq&-\frac1{\lambda_1(p-1)}\ln\frac{\eta^{p-1}(0)-\lambda_1}{\eta^{p-1}(0)}.
 	\eess
The arbitrariness of $t$ implies
 \begin{equation*}	T_\lambda \leq-\frac1{\lambda_1(p-1)}\ln\frac{\eta^{p-1}(0)
 -\lambda_1}{\eta^{p-1}(0)}=-\frac1{\lambda_1(p-1)}\ln
 \left(1-\frac{\lambda_1}{\eta^{p-1}(0)}\right).
 \end{equation*}
Using the second equation in \eqref{4.10} and $\lim\limits_{y\to0^+}\frac{\ln(1-\lambda_1y)}y=-\lambda_1$, we have
 \bes\label{4.13}
 \limsup_{\lambda\to \infty}\lambda^{p-1}T_\lambda&\leq&\limsup_{\lambda \to \infty}\frac{\lambda^{p-1}}{\lambda_1(p-1)}
 \left[-\ln\left(1-\frac{\lambda_1}{\eta^{p-1}(0)}\right)\right]\nm\\[1mm]
&=&\lim_{\lambda\to \infty}\frac{\lambda^{p-1}}{\lambda_1(p-1)}\frac{\lambda_1}{\lambda^{p-1}\left(\int_{\Omega} \psi \phi d \mu\right)^{p-1}}\nm\\
&=&\frac1{(p-1)\left(\int_{\Omega}\psi\phi {\rm d}\mu\right)^{p-1}}.
	\ees
For the given $\tilde x \in\{x\in V:\,\psi(x)\not=0\}$. We take $\Omega=\{\tilde x\}$. Since $\int_{\Omega}\phi d\mu=1$, we have $\phi(\tilde x)\mu(\tilde x)=1$. From this and \eqref{4.13}, we see $\limsup_{\lambda\to\infty}\lambda^{p-1}T_\lambda\leq\frac1{(p-1)\psi^{p-1}(\tilde x)}$.
The arbitrariness of $\tilde x$ implies
  \bess
\limsup_{\lambda\to\infty}\lambda^{p-1}T_\lambda\leq\frac1{(p-1)\|\psi\|_{\ell^{\infty}(V)}^{p-1}}.
\eess
This combined with \eqref{4.9} allows us to derive \eqref{4.7}.
\end{proof}

Theorem \ref{t2.1} follows from Lemma \ref{l4.3} immediately.

\subsection{Proof of Theorem \ref{t2.2}}{\setlength\arraycolsep{2pt}

To prove Theorem \ref{t2.2}, we first prove some lemmas.

\begin{lemma}\label{l4.4}
Suppose $D_{\mu}, D_{\omega}<\infty$ and $\inf\limits_{V}\mu>0$.  Assume that $G=(V,E)$ satisfies the hypotheses of Theorem \ref{t2.2}. Fix $\tau>0$. If there exists $\bar x\in V$ such that $\liminf_{d\left(\bar x,x\right)\to\infty}\psi(x)>0$,  then
\begin{equation}\label{4.14}
\inf_{x\in V}\sum_{y\in V}P(\tau,x,y)\psi(y)\mu(y)>0.
\end{equation}
\end{lemma}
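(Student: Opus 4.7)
The plan is to reduce the claim to a uniform lower bound on the heat semigroup applied to the indicator of a ``far'' set, and then split the analysis according to whether $x$ is near or far from $\bar x$. From the hypothesis $\liminf_{d(\bar x,x)\to\infty}\psi(x)>0$, I fix $R_0>0$ and $\delta>0$ so that $\psi(y)\ge\delta$ whenever $y\in A:=\{y\in V:d(\bar x,y)\ge R_0\}$. The assumptions $D_\mu,D_\omega<\infty$ together with $\omega_{\min}>0$ force the vertex degrees of $G$ to be uniformly bounded (since $\mu_{\max}=\omega_{\min}D_\omega<\infty$, $m(x)\le D_\mu\mu_{\max}$, and $m(x)\ge\mathrm{deg}(x)\omega_{\min}$), so $B(\bar x,R_0)$ is a finite set; since $V$ is infinite, $A\ne\emptyset$. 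By nonnegativity of $P$ and $\psi$,
$$\sum_{y\in V}P(\tau,x,y)\psi(y)\mu(y)\;\ge\;\delta\sum_{y\in A}P(\tau,x,y)\mu(y),$$
so it suffices to bound the right-hand sum below uniformly in $x\in V$.

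For the \emph{far} case $x\in A$, I would discard every term but the diagonal one:
$$\sum_{y\in A}P(\tau,x,y)\mu(y)\;\ge\;P(\tau,x,x)\,\mu(x)\;\ge\;P(\tau,x,x)\,\inf_V\mu.$$
Proposition \ref{p2.3}, available under $CDE'(n,0)$ together with $D_\mu,D_\omega<\infty$, supplies $P(\tau,x,x)\ge C(n)/\mathcal V(x,\sqrt\tau)$, and the volume growth condition \textbf{(VG)} combined with $\mu_{\max}<\infty$ bounds $\mathcal V(x,\sqrt\tau)$ uniformly in $x$: use \textbf{(VG)} directly when $\sqrt\tau\ge r_0$, and the bounded-degree bound $|B(x,r)|\le(D+1)^r$ times $\mu_{\max}$ for small $\tau$. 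This gives a uniform positive lower bound in this case. For the \emph{near} case $x\in B(\bar x,R_0)$, which is a finite set, I fix any $y_0\in A$ and use the strict positivity $P(\tau,x,y_0)>0$ from Proposition \ref{p2.2}(ii); taking the minimum over the finitely many such $x$ yields a positive constant. Combining the two bounds and multiplying through by $\delta$ produces \eqref{4.14}.

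The main obstacle is matching the prescribed $\tau$ to the time restriction $t>t_0>0$ appearing in Proposition \ref{p2.3}; for small $\tau$ the on-diagonal lower bound can still be recovered from the short-time expansion $P(t,x,x)\mu(x)=1-t\,m(x)/\mu(x)+O(t^2)\ge 1/2$ for $t\le 1/(2D_\mu)$, which is uniform in $x$ because $\Delta$ is bounded on $\ell^\infty(V)$ when $D_\mu<\infty$, or by using the semigroup identity in Proposition \ref{p2.2}(v) to pass from a time above $t_0$ down to $\tau$. Aside from this technicality, the argument is a clean near/far case split driven by the uniform positive lower bound on $\psi$ at infinity and by the local/global control on the heat kernel supplied by Propositions \ref{p2.2} and \ref{p2.3}.
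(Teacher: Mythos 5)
Your proof is correct and follows essentially the same route as the paper's: a near/far split relative to $\bar x$, with the far case handled by keeping only the diagonal term $P(\tau,x,x)\mu(x)$ and invoking Proposition \ref{p2.3} together with \textbf{(VG)} and $\inf_V\mu>0$, and the near case handled by finiteness of the ball around $\bar x$ plus strict positivity of the heat kernel. Your extra care about the $t>t_0$ restriction and about $\sqrt\tau<r_0$ is a reasonable tightening of details the paper glosses over, but it does not change the argument.
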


\begin{proof}[Proof] Letting $\psi_{\infty}=\liminf\limits_{d\left(\bar x,x\right)\to\infty}\psi(x)$. Without loss of generality, we assume that there exist $0<\varepsilon<\psi_{\infty}$ and a positive integer $k$ such that
 \begin{equation}\label{4.15}
\psi(x)= \begin{cases}0, &d(x,\bar x)\leq k,\\[0.1mm]
\psi_{\infty}-\varepsilon, &d\left(x, \bar x\right)\geq k+1.\end{cases}
 \end{equation}
Since $V$ is locally finite, we have that
 \bess
\#\{x\in V: d(\bar x,x)<k+1\}<\infty,\;\;\;\#\left\{y\in V:\, k+1\le d(\bar x,y)\leq
  k+2\right\}<\infty.
 \eess		
Fix $\tau>0$. Thanks to $P(\tau,x,y), \psi(y)$ and $\mu(y)>0$ for $x,y\in V$, by \eqref{4.15}, we see  that
 \bes\label{4.16}
	 \inf\limits_{\substack{x\in V\\ d(x,\bar x)<k+1}}\;\sum_{y\in V}P(\tau,x,y)\psi(y)\mu(y)&\ge& \inf\limits_{\substack{x\in V\\ d(x,\bar x)<k+1}}\sum_{\substack{y\in V\\ k+1\leq d(\bar x,y)\leq k+2}}\!\!\!\!\!\!\!\!P(\tau,x,y)\psi(y)\mu(y)\nm\\[1mm]
	&=&\inf\limits_{\substack{x\in V\\d(x,\bar x)<k+1}}\sum_{\substack{y\in V\\ k+1\leq d(\bar x,y)\leq k+2}}\!\!\!\!\!\!\!\! P(\tau,x,y)\left(\psi_{\infty}-\varepsilon\right)\mu(y)\nm\\
 &=&\min_{\substack{x\in V\\ d(x,\bar x)<k+1}}\sum_{\substack{y\in V\\ k+1\leq d(\bar x,y)\leq k+2}}\!\!\!\!\!\!\!\!P(\tau,x, y)\left(\psi_{\infty}-\varepsilon\right)\mu(y)\nm\\
 &>0&
\ees
and
\begin{equation}\label{4.17}
\inf_{\substack{x\in V\\ {d(x,\bar x)\geq k+1}}}\;\sum_{y\in V}P(\tau,x,y)\psi(y)\mu(y)\geq\inf_{\substack{x\in V\\{d(x,\bar x)\geq k+1}}}\!\!P(\tau,x,x)\psi(x)\mu(x).
\end{equation}
Since the graph $G$ satisfies $CDE'(n,0)$, $D_{\mu}, D_{\omega}<\infty$, $\psi_{\infty}-\varepsilon>0$ and $\inf\limits_{V}\mu >0$. Using \eqref{4.15}, Proposition \ref{p2.3} and the condition {\bf(VG)} in sequence, we have
 \bess
 \inf_{\substack{x\in V\\ {d(x,\bar x)\geq k+1}}}\!\!P(\tau,x,x)\psi(x)\mu(x)&\geq&\inf_{\substack{x\in V\\{d(x,\bar x)\geq k+1}}}\frac{C(n)}{\mathcal{V}(x,\sqrt{\tau})}(\psi_{\infty}-\varepsilon)\inf_V\mu\nm\\
 &\geq&\inf_{\substack{x\in V\\ {d(x,\bar x)\geq k+1}}}\frac{C(n)}{c_0 \tau^{\frac{m}2}}(\psi_{\infty}-\varepsilon)\inf_V\mu\nm\\[1mm]
 &>&0.
 \eess
This combines with \eqref{4.16} and \eqref{4.17} to deduce \eqref{4.14}. The proof is complete.
\end{proof}

\begin{lemma}\label{l4.5}
 Assume that the hypotheses in Lemma \ref{l4.4} hold. Then for any $\tau>0$,
 there holds
 \bes
 \sum_{y\in V}P(t,x,y)\psi(y)\mu(y)\ge\sigma_0(\tau),\;\;~x\in V,~t>\tau,
 \label{4.18}\ees
where
\begin{equation*}
\sigma_0(\tau)=\inf_{x\in V}\sum_{y\in V}P(\tau,x,y)\psi(y)\mu(y)>0.
	\end{equation*}
\end{lemma}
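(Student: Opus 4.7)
\medskip

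\noindent\textbf{Proof proposal.} The plan is to reduce the bound at time $t>\tau$ to the bound at time $\tau$ by splitting the heat kernel via the semigroup identity in Proposition \ref{p2.2}(v), and then to use Lemma \ref{l3.3} to absorb the extra time $t-\tau$. Positivity of $\sigma_0(\tau)$ is already given by Lemma \ref{l4.4}, so the only issue is to transport the uniform lower bound at time $\tau$ to all larger times.

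Concretely, fix $\tau>0$ and $t>\tau$. Writing $t=(t-\tau)+\tau$ and applying Proposition \ref{p2.2}(v), I would start from
\begin{equation*}
\sum_{y\in V}P(t,x,y)\psi(y)\mu(y)
=\sum_{y\in V}\mu(y)\psi(y)\sum_{z\in V}P(t-\tau,x,z)P(\tau,z,y)\mu(z).
\end{equation*}
Since every factor ($P$, $\psi$, $\mu$) is nonnegative, Tonelli's theorem for sums lets me interchange the two summations without any convergence concerns, yielding
\begin{equation*}
\sum_{y\in V}P(t,x,y)\psi(y)\mu(y)
=\sum_{z\in V}P(t-\tau,x,z)\mu(z)\left(\sum_{y\in V}P(\tau,z,y)\psi(y)\mu(y)\right).
\end{equation*}

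Next, by the very definition of $\sigma_0(\tau)$ the inner parenthesis is at least $\sigma_0(\tau)$ for every $z\in V$, which is the step where Lemma \ref{l4.4} delivers the crucial positivity $\sigma_0(\tau)>0$. Factoring this constant out and invoking Lemma \ref{l3.3} (which applies because $D_\mu<\infty$) to evaluate $\sum_{z\in V}P(t-\tau,x,z)\mu(z)=1$, I obtain
\begin{equation*}
\sum_{y\in V}P(t,x,y)\psi(y)\mu(y)\ge\sigma_0(\tau)\sum_{z\in V}P(t-\tau,x,z)\mu(z)=\sigma_0(\tau),
\end{equation*}
which is \eqref{4.18}.

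There is essentially no hard step here; the argument is just the semigroup property combined with stochastic completeness. The one point requiring a line of care is the swap of the two infinite sums, but nonnegativity of the summands makes this an immediate application of Tonelli, so the entire lemma follows in a few lines once Lemma \ref{l4.4} is in hand.
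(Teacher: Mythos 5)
Your proposal is correct and follows essentially the same route as the paper: the semigroup identity from Proposition \ref{p2.2}(v), interchange of the two sums, the pointwise lower bound $\sum_{y}P(\tau,z,y)\psi(y)\mu(y)\ge\sigma_0(\tau)$, and stochastic completeness via Lemma \ref{l3.3}. The only cosmetic difference is that you justify the interchange by Tonelli's theorem for nonnegative terms, whereas the paper first bounds the double sum by $\|\psi\|_{\ell^{\infty}(V)}$ using Proposition \ref{p2.2}(iv); both justifications are valid.
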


\begin{proof} Clearly, $\sigma_0(\tau)>0$ by Lemma \ref{l4.4}. In view of Proposition \ref{p2.2} (iv) and (v), it follows that
  \bess
	\sum_{y\in V}\sum_{z\in V}P(t-\tau,x,z)P(\tau,z,y)\psi(y)\mu(z)\mu(y)
	&=&\sum_{y\in V}\left(\sum_{z\in V}P(t-\tau,x,z)P(\tau,z,y)\mu(z)\right) \psi(y)\mu(y)\\
	&=&\sum_{y\in V}P(t,x,y)\psi(y)\mu(y)\\
	&\leq&\|\psi\|_{\ell^{\infty}(V)}\;~~\text{for}~x\in V,~t>\tau.
	\eess
Using this and Proposition \ref{p2.2} (v), we have 	
 \bess
\sum_{y\in V}P(t,x,y)\psi(y)\mu(y)&=&\sum_{y\in V}\left(\sum_{z\in V} P(t-\tau,x,z)P(\tau,z,y)\mu(z)\right)\psi(y)\mu(y)\\
&=&\sum_{z\in V}\sum_{y \in V}P(t-\tau,x,z) P(\tau,z,y)\psi(y)\mu(y)\mu(z).
  \eess		
Since $\sum\limits_{y\in V}P(\tau,z,y)\psi(y)\mu(y)\ge \sigma_0(\tau)$ for $z\in V$ and $P(t-\tau,x,z)\mu(z)>0$ for $t>\tau$ and $x,z\in V$. It is easy to see that
\bess
	\sum_{z\in V}\sum_{y \in V}P(t-\tau,x,z)P(\tau,z,y)\psi(y)\mu(y)\mu(z)
	&=&\sum_{z\in V}P(t-\tau,x,z)\mu(z)\sum_{y \in V}P(\tau,z,y)\psi(y)\mu(y)\\	
  &\geq&\sum_{z\in V}P(t-\tau,x,z) \sigma_0(\tau)\mu(z),\;\;~x\in V,~t>\tau.\eess
By virtue of $D_{\mu}<\infty$ and Lemma \ref{l3.3}, it follows that
	\bess
\sum_{z\in V}P(t-\tau,x,z) \sigma_0(\tau)\mu(z)=\sigma_0(\tau),\;\;~x\in V,~t>\tau.\eess
Based on the above discussions, it is easy to see that \eqref{4.18} holds true.
\end{proof}
	
\begin{lemma}\label{l4.6} Suppose $D_{\mu}<\infty$. If $u$ solves \eqref{1.6} in  $[0,T_\lambda )$, then for any vertex $\bar x \in V$,
	\begin{equation}
		\left(\lambda\sum_{y\in V}P(t,\bar x,y)\psi(y)\mu(y)\right)^{1-p}\geq(p-1)t,\quad 0<t<T_\lambda .\label{4.19}
	\end{equation}
\end{lemma}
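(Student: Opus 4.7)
The plan is to fix $t\in(0,T_\lambda)$ and $\bar x\in V$ and track the auxiliary function
$$J(s)=\sum_{y\in V}P(t-s,\bar x,y)\,u(y,s)\,\mu(y),\qquad 0\le s<t.$$
At $s=0$ we have $J(0)=\lambda\sum_{y\in V}P(t,\bar x,y)\psi(y)\mu(y)$, so the target \eqref{4.19} is exactly $J(0)^{1-p}\ge(p-1)t$. The intuition is that $J$ is the mean of $u(\cdot,s)$ against the heat semigroup started at $\bar x$, and the source $u^p$ should force $J$ to satisfy the model blow-up ODE $J'\ge J^p$.

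\textbf{Step 1 (differentiate $J$).} Using Proposition \ref{p2.2}(i), $\partial_s P(t-s,\bar x,y)=-\Delta_y P(t-s,\bar x,y)$, together with $u_t=\Delta u+u^p$, termwise differentiation gives
$$J'(s)=-\sum_{y\in V}\Delta_y P(t-s,\bar x,y)\,u(y,s)\mu(y)+\sum_{y\in V}P(t-s,\bar x,y)\bigl[\Delta u(y,s)+u^p(y,s)\bigr]\mu(y).$$
The symmetry of the graph Laplacian with respect to $\mu$ cancels the two $\Delta$-terms, leaving $J'(s)=\sum_{y\in V}P(t-s,\bar x,y)\,u^p(y,s)\,\mu(y)$.

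\textbf{Step 2 (Jensen and ODE comparison).} Since $D_\mu<\infty$, Lemma \ref{l3.3} gives $\sum_{y\in V}P(t-s,\bar x,y)\mu(y)=1$, so $P(t-s,\bar x,\cdot)\mu(\cdot)$ is a probability measure on $V$. Convexity of $z\mapsto z^p$ on $[0,\infty)$ (with $p>1$) and Jensen's inequality yield
$$J'(s)\ge\Bigl(\sum_{y\in V}P(t-s,\bar x,y)\,u(y,s)\mu(y)\Bigr)^{p}=J(s)^{p}.$$
Since $u>0$ and $P,\mu>0$, we have $J(s)>0$. Multiplying by $(1-p)J(s)^{-p}<0$ and integrating from $0$ to $s<t$ gives $J(s)^{1-p}-J(0)^{1-p}\le(1-p)s$, hence $J(0)^{1-p}\ge(p-1)s+J(s)^{1-p}>(p-1)s$. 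Letting $s\nearrow t$ and using that $u$ remains bounded on $V\times[0,s']$ for $s'<t<T_\lambda$ (so $J(s)$ stays finite) delivers $J(0)^{1-p}\ge(p-1)t$, which is \eqref{4.19}.

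\textbf{Expected main obstacle.} The substantive point is justifying, on the infinite graph $V$, both the termwise differentiation of the sum defining $J$ and the symmetry identity $\sum_y\Delta_y P(t-s,\bar x,y)\,u(y,s)\mu(y)=\sum_y P(t-s,\bar x,y)\Delta u(y,s)\mu(y)$ used in Step 1. Here I would invoke $D_\mu<\infty$ (hence $\Delta$ is bounded on $\ell^\infty(V,\mu)$) and boundedness of $u$ on $V\times[0,s']$ for any $s'<t$, and then use the uniform-convergence machinery of Proposition \ref{p2.4} (as in the proof of Theorem \ref{t3.2}) to swap the sum with $\partial_s$ and to perform the discrete ``integration by parts''. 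Once these analytic preliminaries are in place, Steps 1--2 are routine.
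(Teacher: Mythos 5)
Your argument is correct in substance and rests on exactly the same mechanism as the paper's: the quantity you call $J(s)$ is precisely the paper's $\sum_{x\in V}P(t-\bar t,\bar x,x)u(x,\bar t)\mu(x)$, and the Jensen step (via Lemma \ref{l3.3}, so that $P(t-s,\bar x,\cdot)\mu(\cdot)$ is a probability measure) and the ODE comparison $J'\ge J^p\Rightarrow J(0)^{1-p}\ge(p-1)t$ are identical to \eqref{4.29}--\eqref{4.30}. The real difference is how the key identity $J(\bar t)=J(0)+\int_0^{\bar t}\sum_{y}P(t-s,\bar x,y)u^p(y,s)\mu(y)\,{\rm d}s$ is obtained. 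You derive its differential form by differentiating $J$ termwise and invoking self-adjointness of $\Delta$, and you correctly flag that justifying the termwise differentiation and the discrete integration by parts on an infinite graph is the substantive obstacle --- but you leave it undischarged. The paper avoids both issues entirely: it starts from the Duhamel representation of Theorem \ref{t3.1} (already established), multiplies by $P(t-\bar t,\bar x,\cdot)\mu(\cdot)$, and uses the Chapman--Kolmogorov identity (Proposition \ref{p2.2}(v)) to collapse the kernels; the only analysis needed is the interchange of sums and integrals, handled by Proposition \ref{p2.4} and the uniform bounds \eqref{4.22}--\eqref{4.23}. Your flagged gap is closable --- with $D_\mu<\infty$, $P(t-s,\bar x,\cdot)\in\ell^1(V,\mu)$ and $u(\cdot,s)\in\ell^\infty(V)$ the double sum $\sum_y\sum_{z\sim y}\omega_{yz}\bigl(P(t-s,\bar x,z)-P(t-s,\bar x,y)\bigr)u(y,s)$ converges absolutely (it is dominated by $2\|u\|_\infty D_\mu$ after summing $m(y)P(t-s,\bar x,y)\le D_\mu\mu(y)P(t-s,\bar x,y)$), so Fubini gives the symmetry identity; the termwise differentiation needs a uniform-in-$s$ domination on compact subintervals --- but carrying this out is essentially re-proving the ingredients that Theorem \ref{t3.1} packages for you. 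So your route is a legitimate differential rephrasing, at the cost of redoing analytic work the paper imports wholesale.
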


\begin{proof}\; For any $0<t<T_\lambda $ and any $\bar{t}\in[0,t)$. According to Theorem \ref{t3.1},
	\bes\label{4.20}
u(x,\bar{t})=\lambda\sum_{y\in V}P(\bar{t},x,y)\psi(y)\mu(y)+\int_0^{\bar{t}}\sum_{y\in V}P(\bar{t}-s,x,y)u^p(y,s)\mu(y){\rm d}s.
	\ees

We next show that
\bes\label{4.21}
\sum_{x\in V}P(t-\bar{t},\bar x,x)u(x,\bar{t})\mu(x)&=&\sum_{x\in V}P(t-\bar{t},\bar x,x)\sum_{y \in V}P(\bar{t},x,y)\lambda\psi(y)\mu(y)\mu(x)\nm\\
	&&+\sum_{x\in V}P(t-\bar{t},\bar x,x)\!\int_0^{\bar{t}}\sum_{y\in V}P(\bar{t}-s,x,y)u^p(y,s)\mu(y){\rm d}s\mu(x).\qquad
\ees
Clearly, there exists $C=C(t)$ such that
  \bes\label{4.22}|u^{p}(y,s)|\le C,\;\;\;\forall\, y\in V,~s\in[0,\bar{t}].\ees
Using this and Proposition \ref{p2.2} (iv) we have
 \bes\label{4.23}	
\sum_{x,y\in V}\!P(t-\bar{t},\bar x,x)P(\bar{t}-s,x,y)u^p(y,s)\mu(y)\mu(x)
&\leq&C\sum_{x,y\in V}\!P(t-\bar{t},\bar x,x)P(\bar{t}-s,x,y)\mu(y)\mu(x)\nm\\
&\leq&C\sum_{x\in V}\!P(t-\bar{t},\bar x,x)\mu(x)\sum_{y\in V}\!P(\bar{t}-s,x,y) \mu(y)\nm\\
&\leq&C.
 \ees
From \eqref{4.22} and Proposition \ref{p2.4}, we know that $\sum_{y\in V}P(t-\bar{t},\bar x,x)P(\bar{t}-s,x,y)u^{p}(y,s)\mu(y)\mu(x)$ converges uniformly w.r.t. $s\in[0,\bar{t})$. As $P(t-\bar{t},\bar x,x)P(\bar{t}-s,x,y)u^{p}(y,s)\mu(y)\mu(x)$ is continuous w.r.t. $s\in[0,\bar{t})$, we see that
\begin{equation}\label{4.24}
	\sum\limits_{y\in V} P(t-\bar{t},\bar x,x)P(\bar{t}-s,x,y)u^{p}(y,s)\mu(y)\mu(x)\text{ is continuous w.r.t. }s\in[0,\bar{t}).
\end{equation}
Thanks to \eqref{4.22} and Lemma \ref{l3.3}, it can be deduced that
  $\big| \sum_{y\in V}P(\bar{t}-s,x,y)u^{p}(y,s)\mu(y)\big| \le C$.
Hence, by Proposition \ref{p2.4}, $\sum_{x\in V}P(t-\bar{t},\bar x,x)\sum_{y\in V}P(\bar{t}-s,x,y)u^{p}(y,s)\mu(y)\mu(x)$ converges uniformly w.r.t. $s\in[0,\bar{t})$. By virtue of \eqref{4.24} and \eqref{4.23}, it follows that
\bes\label{4.25}
	&&\sum_{x\in V}P(t-\bar{t},\bar x,x)\int_0^{\bar{t}}\sum_{y\in V} P(\bar{t}-s,x,y)u^p(y,s)\mu(y) \mu(x){\rm d}s\nm\\
	&=&\int_0^{\bar{t}}\sum_{x\in V}\sum_{y\in V} P(t-\bar{t},\bar x,x)P(\bar{t}-s,x,y)u^p(y,s)\mu(y)\mu(x){\rm d}s\nm\\
	&=&\int_0^{\bar{t}}\sum_{y\in V}\sum_{x\in V} P(t-\bar{t},\bar x,x)P(\bar{t}-s,x,y)u^p(y,s)\mu(y)\mu(x){\rm d}s\nm\\
	&=:&I.
	\ees
Making use of Proposition \ref{p2.2} (iv), (v) and \eqref{4.22} one has
 \bess
I&=&\int_0^{\bar{t}}\sum_{y\in V}u^p(y,s)\mu(y)\sum_{x\in V} P(t-\bar{t},\bar x,x)P(\bar{t}-s,x,y)\mu(x){\rm d}s\nm\\
&=&\int_0^{\bar{t}}\sum_{y \in V}P(t-s,\bar x,y)u^p(y,s)\mu(y){\rm d}s\nm\\
 &\le& C\bar{t}.
 \eess
This combined with \eqref{4.25} allows us to derive
\bes\label{4.26}
\sum_{x\in V}P(t-\bar{t},\bar x,x)\int_0^{\bar{t}}\sum_{y\in V} P(\bar{t}-s,x,y)u^p(y,s)\mu(y) \mu(x){\rm d}s&=&\int_0^{\bar{t}}\sum_{y \in V}P(t-s,\bar x,y)u^p(y,s)\mu(y){\rm d}s\nm\\
&\le&C\bar{t}.
\ees
By similar arguments as above, we can obtain
 \bes\label{4.27}
 \lambda\sum_{x\in V}P(t-\bar{t},\bar x,x)\sum_{y\in V}P(\bar{t}, x,y)\psi(y)\mu(y)\mu(x)&=&\lambda\sum_{y\in V}P(t,\bar x,y)\psi(y)\mu(y)\nm\\
&\le&\lambda\|\psi\|_{\ell^{\infty}(V)}.
\ees
On the basis of \eqref{4.20}, \eqref{4.26} and \eqref{4.27}, it is easy to derive \eqref{4.21}.
By comprehensively utilizing \eqref{4.21}, \eqref{4.25}, \eqref{4.26} and \eqref{4.27}, it can be concluded that
 \bes
\sum_{x\in V}\!P(t\!-\!\bar{t},\bar x,x)u(x,\bar{t})\mu(x)
&=&\lambda\sum_{y\in V}\!P(t,\bar x,y)\psi(y)\mu(y)\!+\!\int_0^{\bar{t}}\sum_{y\in V}\!P(t\!-\!s,\bar x,y)u^p(y,s)\mu(y){\rm d}s.\qquad\label{4.28}
\ees

Due to $D_{\mu}<\infty$, in view of Lemma \ref{l3.3} we have $\sum_{y\in V}P(t-s,\bar x,y)\mu(y)=1$ for $0<s<t$. It follows by use of Jensen's inequality that
 \bess
	\sum_{y\in V}P(t-s,\bar x,y)u^p(y,s)\mu(y)\ge \left(\sum_{y\in V} P(t-s,\bar x,y)u(y,s)\mu(y)\right)^p.
 \eess
This combined with \eqref{4.28} allows us to deduce
 \bess
\sum_{x\in V}\!P(t-\bar{t},\bar x,x)u(x,\bar{t})\mu(x)
&\geq&\lambda\sum_{y\in V}\!P(t,\bar x,y)\psi(y)\mu(y)+\int_0^{\bar{t}}\left(\sum_{y\in V}\!P(t-s,\bar x,y)u(y,s)\mu(y)\right)^p {\rm d}s\nm\\[1mm]
&=:& G(\bar{t}).
	\eess
Clearly,
  \begin{equation}\label{4.29}
G(\bar{t})\ge G(0)=\lambda\sum_{y\in V}P(t,\bar x,y)\psi(y)\mu(y)
\end{equation}
and
 \begin{equation}\label{4.30}
\frac{{\rm d}G}{{\rm d}\bar t}=\left(\sum_{y\in V}P(t-\bar{t},\bar x,y)u(y,\bar{t})\mu(y)\right)^p\geq G^p(\bar{t}).
 \end{equation}
Due to $\psi(x)\ge,\not\equiv 0$ on $V$, there exists $x_1\in V$ such that $\psi(x_1)>0$. Thus, by \eqref{4.29}, $G(\bar{t})\ge\lambda P(t,\bar x,x_1)\psi(x_1)\mu(x_1)>0$. Therefore
$\frac1{1-p}\left[G^{1-p}(\bar{t})-G^{1-p}(0)\right]\geq \bar{t}$ by \eqref{4.30}. As $p>1$ and $G(\bar{t})>0$, it follows that $G^{1-p}(0)\geq(p-1)\bar{t}$,
i.e.,
\begin{equation*}
	\left(\lambda\sum_{y\in V}P(t,\bar x,y)\psi(y)\mu(y)\right)^{1-p}\geq(p-1)\bar{t}.
\end{equation*}
Letting $\bar{t} \to t$ we conclude that \eqref{4.19} holds.
\end{proof}

\begin{proof}[Proof of Theorem \ref{t2.2}]	For any $\lambda>0$, choose $0<\tau_{\lambda}<T_\lambda $. Then, by \eqref{4.19},
 \begin{equation}\label{4.31}
 \left(\lambda\sum_{y\in V}P(t,\bar x,y)\psi(y)\mu(y)\right)^{1-p}\geq(p-1)t,~\tau_\lambda<t<T_\lambda .
	\end{equation}
In view of Lemma \ref{l4.5}, $\sum_{y\in V}P(t,\bar x,y)\psi(y)\mu(y)\ge \sigma_0(\tau_\lambda)$ for $\tau_\lambda<t<T_\lambda$. This combined with \eqref{4.31} allows us to derive $\sigma_0^{1-p}\left(\tau_\lambda\right)\lambda^{1-p}\geq (p-1)t$ for $\tau_\lambda<t<T_\lambda$. This implies $T_\lambda <\infty$ for any $\lambda>0$. The proof is complete.
\end{proof}

\subsection{Proof of Theorem \ref{t2.3}}

\begin{lemma}\label{l4.7}
	Suppose that there exist $\tilde x\in V$ and $\psi_{\infty}>0$ such that $		 \lim\limits_{d\left(\tilde x,x\right)\to\infty}\psi(x)=\psi_{\infty}$,
and that $G$ satisfies the condition {\bf(EC)}. Then
 \bess
 \lambda^{p-1}T_\lambda\leq \frac1{(p-1)\psi_\infty^{p-1}}.
\eess
\end{lemma}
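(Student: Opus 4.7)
The plan is to mimic the first-eigenvalue ODE argument of Lemma~\ref{l4.3}, but with a strategically chosen finite subset $\Omega$ supplied by the eigenvalue condition \textbf{(EC)}. Since $\psi(x)\to\psi_\infty$ as $d(\tilde x,x)\to\infty$, for fixed $\varepsilon'>0$ I can find $\delta>0$ such that $\psi(x)\ge\psi_\infty-\varepsilon'$ whenever $d(\tilde x,x)>\delta$. Then \textbf{(EC)} produces, for any prescribed $\varepsilon>0$, a finite connected $\Omega\subset V$ with $\lambda_1(\Omega)<\varepsilon$ entirely contained in $\{x\in V:d(\tilde x,x)>\delta\}$, so that $\psi\ge\psi_\infty-\varepsilon'>0$ on $\Omega$. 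Let $\phi>0$ be the normalized eigenfunction associated with $\lambda_1(\Omega)$ and set $\eta(t)=\int_\Omega u(\cdot,t)\phi\,d\mu$.

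Repeating Steps~1--3 of the proof of Lemma~\ref{l4.1} verbatim (Jensen's inequality, self-adjointness of $\Delta_\Omega$, and the pointwise bound $\phi\Delta u\ge\phi\Delta_\Omega u_\Omega$ on $\Omega$) yields the differential inequality
$$\eta'(t)\ge\eta^p(t)-\lambda_1(\Omega)\eta(t),\qquad 0<t<T_\lambda,$$
together with $\eta(0)=\lambda\int_\Omega\psi\phi\,d\mu\ge\lambda(\psi_\infty-\varepsilon')$, since $\int_\Omega\phi\,d\mu=1$. Choosing $\varepsilon$ so small that $\lambda_1(\Omega)<\varepsilon<\lambda^{p-1}(\psi_\infty-\varepsilon')^{p-1}$ guarantees $\eta^{p-1}(0)>\lambda_1(\Omega)$, which is precisely the hypothesis needed to run Steps~2--4 of Lemma~\ref{l4.3}. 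Those steps show $\eta,\eta'>0$ on $[0,T_\lambda)$ and, upon integrating the ODE inequality, give
$$T_\lambda\le-\frac{1}{\lambda_1(\Omega)(p-1)}\ln\!\left(1-\frac{\lambda_1(\Omega)}{\eta^{p-1}(0)}\right)\le-\frac{1}{\lambda_1(\Omega)(p-1)}\ln\!\left(1-\frac{\lambda_1(\Omega)}{\lambda^{p-1}(\psi_\infty-\varepsilon')^{p-1}}\right).$$

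Since $T_\lambda$ itself is independent of $\varepsilon$, I let $\varepsilon\to 0$; this forces $\lambda_1(\Omega)\to 0$, and the identity $\lim_{y\to 0^+}\frac{\ln(1-ay)}{y}=-a$ converts the right-hand side into $\frac{1}{(p-1)\lambda^{p-1}(\psi_\infty-\varepsilon')^{p-1}}$. Finally, letting $\varepsilon'\to 0$ yields $\lambda^{p-1}T_\lambda\le\frac{1}{(p-1)\psi_\infty^{p-1}}$. The main obstacle is organizational rather than technical: one must verify that the ODE machinery of Lemma~\ref{l4.3} carries over to an \emph{arbitrary} finite connected test set $\Omega$ on which $\psi$ is positive, the only substantive requirement being $\eta(0)>\lambda_1(\Omega)^{1/(p-1)}$. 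It is exactly the decoupling of ``$\lambda_1(\Omega)$ small'' from ``$\psi$ close to $\psi_\infty$ on $\Omega$'' provided by \textbf{(EC)} combined with the asymptotic hypothesis on $\psi$ that secures this inequality for all sufficiently small $\varepsilon$, and this is where the argument genuinely differs from the Euclidean or hyperbolic settings.
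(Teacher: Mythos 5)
Your proposal is correct and follows essentially the same route as the paper: use the asymptotic hypothesis on $\psi$ together with \textbf{(EC)} to produce a finite connected $\Omega$ far from $\tilde x$ on which $\psi\ge\psi_\infty-\varepsilon'$ and $\lambda_1(\Omega)$ is arbitrarily small, then run the Kaplan-type eigenvalue ODE argument for $\eta(t)=\int_\Omega u\phi\,{\rm d}\mu$. The only (immaterial) difference is in the final integration: you integrate $\eta'\ge\eta^p-\lambda_1\eta$ exactly and send $\lambda_1(\Omega)\to0$ in the resulting logarithmic bound, whereas the paper first discards the linear term via $\eta^p-\lambda_1\eta>(1-\varepsilon)\eta^p$ (using $\lambda_1<\varepsilon\eta^{p-1}(0)$) and integrates the pure power ODE; both yield the same limit.
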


\begin{proof} By our assumption, for any given $\varepsilon\in(0,\min\{1,\psi_{\infty}\})$, there exists $\delta>0$ such that 	
\begin{equation}\label{4.32}
\psi(x)>\psi_{\infty}-\varepsilon\quad\text{ for }x\in\left\{x\in V: d\left(\tilde x, x\right)>\delta\right\}.
\end{equation}
For any given $\lambda>0$, choosing
\bess
 \varepsilon'=\varepsilon\lambda^{p-1}(\psi_{\infty}-\varepsilon)^{p-1},\;\;\;\text{and} \;\;\delta'>\delta.
\eess
Since $G$ satisfies the condition {\bf(EC)}, there exists $\Omega=\Omega(\varepsilon',\delta')\subset V$ such that
$\lambda_1(\Omega)<\varepsilon'$ and $d\left(x,\tilde x\right)>\delta'$
for all $x\in\Omega$. Then we have $\psi(x)>\psi_{\infty}-\varepsilon$ in $\Omega$ by \eqref{4.32}. Let $\phi(x)>0$ be the normalized eigenfunction corresponding to $\lambda_1(\Omega)$, i.e., $\int_{\Omega}\phi{\rm d}\mu=1$. It follows that
 \bess
\varepsilon\lambda^{p-1}\left(\int_{\Omega}\psi(x)\phi(x){\rm d}\mu\right)^{p-1}>\varepsilon \lambda^{p-1}\left(\psi_{\infty}-\varepsilon\right)^{p-1}>\lambda_1(\Omega).
 \eess
Define $\eta(t)$ as in \eqref{4.2}.
Then $\varepsilon\eta^{p-1}(0)>\lambda_1(\Omega)$, and so
$\eta^p(0)-\lambda_1(\Omega)\eta(0)>(1-\varepsilon)\eta^p(0)>0$.
By a similar discussion as in the proof of Lemma \ref{l4.3}, we deduce that
\begin{equation}\label{4.33}
 \eta'(t)\ge\eta^{p}(t)-\lambda_1(\Omega)\eta(t),~\eta(t)>0,\text{ and }\eta'(t)>0,~t\in(0, T_\lambda ).
\end{equation}
This combines with $\varepsilon\eta^{p-1}(0)>\lambda_1(\Omega)$ indicates $\varepsilon\eta^{p-1}(t)>\varepsilon\eta^{p-1}(0)>\lambda_1(\Omega)$, and hence $\eta^{p}(t)-\lambda_1(\Omega)\eta(t)>(1-\varepsilon)\eta^p(t)>0$ for $t\in(0, T_\lambda)$. From this and \eqref{4.33} we have $\eta'(t)>(1-\varepsilon)\eta^p(t)$ for $t\in(0, T_\lambda)$.
It follows that, for any given $t_1\in(0,T_\lambda)$,
 \bess
t_1&\leq&\frac1{1-\varepsilon}\int_{\eta(0)}^{\eta(t_1)}\frac{{\rm d}\eta}{\eta^p}\nm\\
&=&\frac 1{(1-\varepsilon)(p-1)}\left[\eta^{1-p}(0)-\eta^{1-p}(t_1)\right] \nm\\
&<&\frac 1{(1-\varepsilon)(p-1)}\eta^{1-p}(0)\nm\\
&=&\frac1{\lambda^{p-1}(1-\varepsilon)(p-1)\left[\int_{\Omega}\psi(\cdot)\phi(\cdot){\rm d}\mu\right]^{p-1}}.
 \eess
The arbitrariness of $t_1$ implies
 \bess
T_\lambda \le\frac1{\lambda^{p-1}(1-\varepsilon)(p-1)\left(\int_{\Omega}\psi(\cdot)\phi(\cdot){\rm d}\mu\right)^{p-1}}.
 \eess
Making use of $\int_{\Omega}\phi {\rm d}\mu=1$ and $\psi(x)>\psi_{\infty}-\varepsilon$ in $\Omega$  we have
\bess
\lambda^{p-1}T_\lambda \leq\frac1{(1-\varepsilon)(p-1)\left(\int_{\Omega}\psi(\cdot)\phi(\cdot){\rm d}\mu\right)^{p-1}}
\le\frac1{(1-\varepsilon)(p-1)\left(\psi_{\infty}-\varepsilon\right)^{p-1}}.
\eess
The arbitrariness of $\varepsilon$ completes the proof.
\end{proof}

\begin{lemma}\label{l4.8} Suppose $D_{\mu}<+\infty$. Assume the hypotheses of Lemma \ref{l4.7} hold. Suppose that $\psi(x)\le \psi_{\infty}, x\in V$, where $\psi_{\infty}$ is a positive constant. Then
\bes
\liminf\limits_{\lambda\to0}\lambda^{p-1}T_\lambda\geq
\frac1{(p-1)\psi_{\infty}^{p-1}}.\label{4.34}
\ees
\end{lemma}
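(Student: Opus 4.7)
The plan is to establish the pointwise inequality $T_\lambda \ge [(p-1)(\lambda\psi_\infty)^{p-1}]^{-1}$ for every $\lambda>0$; then \eqref{4.34} follows immediately by multiplying through by $\lambda^{p-1}$ and taking $\liminf_{\lambda\to 0}$. The argument amounts to reusing the spatially constant upper solution from Step 1 of the proof of Lemma \ref{l4.3}, but with $\psi_\infty$ in place of $\|\psi\|_{\ell^{\infty}(V)}$. These two quantities in fact coincide under the standing hypotheses: $\psi\le\psi_\infty$ on $V$ yields $\|\psi\|_{\ell^{\infty}(V)}\le\psi_\infty$, while $\lim_{d(\tilde x,x)\to\infty}\psi(x)=\psi_\infty$ produces vertices at which $\psi$ is arbitrarily close to $\psi_\infty$, forcing the reverse inequality.

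For each $\lambda>0$, I would consider the spatially constant function
\[
\bar u(t)=\bigl[(\lambda\psi_\infty)^{-(p-1)}-(p-1)t\bigr]^{-1/(p-1)},\qquad 0\le t<\bar T:=\frac{1}{(p-1)(\lambda\psi_\infty)^{p-1}}.
\]
Since $\Delta\bar u\equiv 0$ and $\bar u$ solves the ODE $\bar u'=\bar u^p$, while $\bar u(0)=\lambda\psi_\infty\ge\lambda\psi(x)$ on $V$, $\bar u$ is a bounded upper solution of \eqref{1.6} on $V\times[0,T_0]$ for every $T_0<\bar T$, and clearly $\lim_{t\nearrow\bar T}\bar u(t)=+\infty$.

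If $T_\lambda=\infty$, then $T_\lambda\ge\bar T$ holds trivially. Otherwise, by the remark immediately following \eqref{1.8}, $u$ blows up at $T_\lambda$, so Lemma \ref{l4.2} applied with $u_1=u$ and $u_2=\bar u$ yields $T_\lambda\ge\bar T$. Either way, $\lambda^{p-1}T_\lambda\ge[(p-1)\psi_\infty^{p-1}]^{-1}$ for every $\lambda>0$, and passing to $\liminf_{\lambda\to 0}$ closes the proof. The only subtlety is that the supersolution argument in Step 1 of the proof of Lemma \ref{l4.3} was stated only for $\lambda>\Lambda$; that restriction, however, was used solely to invoke Lemma \ref{l4.1} to guarantee $T_\lambda<\infty$ and thereby apply Lemma \ref{l4.2}. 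In our setting, the $T_\lambda=\infty$ case is handled separately and trivially, so no lower bound on $\lambda$ is needed.
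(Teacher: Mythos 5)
Your proposal is correct and follows essentially the same route as the paper: both construct the spatially constant supersolution $\bar u(t)=[(\lambda\psi_\infty)^{-(p-1)}-(p-1)t]^{-1/(p-1)}$, note that $\bar u(0)=\lambda\psi_\infty\ge\lambda\psi(x)$ and that $\bar u$ blows up at $\bar T=[(p-1)(\lambda\psi_\infty)^{p-1}]^{-1}$, and invoke Lemma \ref{l4.2} to get $T_\lambda\ge\bar T$ for every $\lambda>0$. One small slip: in Lemma \ref{l4.2} the roles should be $u_1=\bar u$ (the supersolution, with $u_1(x,0)\ge\lambda\psi$) and $u_2=u$ (the solution of \eqref{1.6}, viewed as a subsolution), so that the conclusion $t_1\le t_2$ reads $\bar T\le T_\lambda$; with your assignment $u_1=u$, $u_2=\bar u$ the hypothesis $u_2(x,0)\le\lambda\psi(x)$ fails and the inequality would point the wrong way. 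Your separate treatment of the case $T_\lambda=\infty$ is a welcome extra precaution that the paper leaves implicit.
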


\begin{proof} Set $T_*=\frac1{(p-1)\lambda^{p-1}\psi_{\infty}^{p-1}}$.
It is easy to see that 	\begin{equation*}
\bar{v}(t)=\big[\left(\lambda\psi_{\infty}\right)^{-(p-1)}-(p-1)t\big]^{-\frac1{p-1}}
\end{equation*}
satisfies \begin{equation*}
	\left\{\begin{array}{lll}
		{\bar{v}}'(t)=\bar{v}^p(t),~&0<t<T_*,\\[1.5mm]
		{\bar{v}}(0)=\lambda\psi_{\infty}\geq \lambda\psi(x),~&x\in V.
	\end{array}\right.
 \end{equation*}
Noticing $\lim\limits_{t\nearrow T_*}\bar{v}(t)=\infty$, and $\lim\limits_{t\nearrow T[\lambda \psi]}\|u(\cdot,t)\|_{\ell^{\infty}(V)}=\infty$.  By Lemma \ref{l4.2}, $T_\lambda \geq\frac1{(p-1)\lambda^{p-1}\psi_{\infty}^{p-1}}$.	This implies \eqref{4.34}.
\end{proof}

Theorem \ref{t2.3} ($1$) follows from Lemmas \ref{l4.7} and \ref{l4.8} immediately.

\begin{proof}[Proof of Theorem \ref{t2.3} $(2)$]
Let $T_0=\frac1{(p-1)\lambda^{p-1}\|\psi\|_{\ell^{\infty}(V)}^{p-1}}$ and
  \begin{equation*}
	\begin{aligned}	 S_1(t)=\big[\left(\lambda\|\psi\|_{\ell^{\infty}(V)}\right)^{-(p-1)}-(p-1)t\big]^{-\frac1{p-1}}.
	\end{aligned}
\end{equation*}
Clearly, $S_1(t)$ satisfies
  \bess
	\left\{\begin{array}{ll}
	{S_1}'(t)=S_1^p(t),~&0<t<T_0,\\[1.5mm]
	{S_1}(0)=\lambda\|\psi\|_{\ell^{\infty}(V)}\geq\lambda\psi(x),~&x\in V,
	\end{array}\right.\eess
Similar to the above, $T_0\leq T_\lambda$, which implies  $\frac1{(p-1)\|\psi\|_{\ell^{\infty}(V)}^{p-1}}\le\liminf_{\lambda\to 0}\lambda^{p-1}T_\lambda$. By Lemma \ref{l4.7}, $\limsup_{\lambda\to 0} \lambda^{p-1}T_\lambda \le\frac1{(p-1)\psi_{\infty}^{p-1}}$.
Therefore,
 \begin{equation*}
	\frac1{(p-1)\|\psi\|_{\ell^{\infty}(V)}^{p-1}}\le\liminf_{\lambda\to 0}\lambda^{p-1} T_\lambda \le \limsup_{\lambda \to 0} \lambda^{p-1} T_\lambda \le\frac1{(p-1) \psi_{\infty}^{p-1}}.
\end{equation*}
The proof is complete.
\end{proof}

\section{Proof of Theorem \ref{t2.4}}

In this section, we suppose that $G=(V,E)$ is a finite connected graph.

\begin{lemma}
 Assume that $u(x,t)$ is the solution of \eqref{1.6}.
Then there exists constant $\Lambda_1=\Lambda_1(p,\psi,V)$ such that $T_\lambda <\infty$ when $\lambda>\Lambda_1$.	
\end{lemma}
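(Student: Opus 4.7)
The plan is to mimic Kaplan's first-eigenvalue argument (Lemma \ref{l4.1}) on the whole finite vertex set $V$, exploiting that on a finite graph the smallest Neumann eigenvalue of $-\Delta$ is $0$, realized by the constant eigenfunction. Since the argument gives a genuine differential inequality for a weighted average of $u$ with positive initial mass, it will produce finite-time blowup for \emph{every} $\lambda>0$, which is stronger than what the lemma asserts. Thus the main work is to carry out the averaging step cleanly; setting $\Lambda_1=0$ (or any convenient positive constant) afterwards trivially fulfils the stated conclusion.

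Concretely, I would set $\phi(x)\equiv 1/\mathcal V(V)$, so that $\phi>0$ on $V$ and $\int_V\phi\,d\mu=1$, and let
$\eta(t)=\int_V u(\cdot,t)\phi\,d\mu=\frac{1}{\mathcal V(V)}\sum_{x\in V}u(x,t)\mu(x)$ for $t\in[0,T_\lambda)$. Then $\eta(0)=\lambda\int_V\psi\phi\,d\mu>0$ because $\psi\ge,\not\equiv 0$. Using the symmetry $\omega_{xy}=\omega_{yx}$, a telescoping (equivalently, the self-adjointness of $\Delta$ on $\ell^2(V,\mu)$) gives $\sum_{x\in V}\Delta u(x,t)\mu(x)=0$, hence from the equation in \eqref{1.6},
\[
\eta'(t)=\frac{1}{\mathcal V(V)}\sum_{x\in V}u^p(x,t)\mu(x)=\int_V u^p(\cdot,t)\phi\,d\mu,\qquad t\in(0,T_\lambda).
\]
Applying Jensen's inequality to the convex function $s\mapsto s^p$ and the probability measure $\phi\,d\mu$ yields $\int_V u^p\phi\,d\mu\ge\eta(t)^p$, so $\eta'(t)\ge \eta(t)^p$ on $(0,T_\lambda)$.

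Integrating this ODE inequality gives $\eta(t)^{1-p}\le\eta(0)^{1-p}-(p-1)t$, so $\eta$ blows up by the finite time
\[
t^*\le\frac{\eta(0)^{1-p}}{p-1}=\frac{\mathcal V(V)^{p-1}}{(p-1)\kk(\lambda\sum_{x\in V}\psi(x)\mu(x)\rr)^{p-1}}.
\]
Since $\eta(t)\le\|u(\cdot,t)\|_{\ell^\infty(V)}$, the blow-up of $\eta$ forces $\lim_{t\to t^*}\|u(\cdot,t)\|_{\ell^\infty(V)}=\infty$, whence $T_\lambda\le t^*<\infty$. In particular, taking any $\Lambda_1>0$ (say $\Lambda_1:=1$), we obtain $T_\lambda<\infty$ for all $\lambda>\Lambda_1$, as claimed.

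No serious obstacle is anticipated: the finiteness of $V$ makes every interchange of sum and derivative automatic, Jensen reduces to the power-mean inequality on a finite convex combination, and differentiability of $\eta$ on $(0,T_\lambda)$ follows immediately from $u\in C^1_V((0,T_\lambda))$. The only point worth stating explicitly is the cancellation $\sum_{x\in V}\Delta u(x,t)\mu(x)=0$, which is where the compactness of the graph is used in a way unavailable in the infinite setting of Lemmas \ref{l4.1}--\ref{l4.3}.
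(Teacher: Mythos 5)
Your proof is correct, but it takes a genuinely different route from the paper's, and in fact proves more. The paper augments $G$ by an auxiliary vertex $z$ joined to some $\tilde x$, poses the Dirichlet eigenvalue problem on the enlarged graph with boundary $\{z\}$, obtains a first eigenvalue $\lambda_1>0$ with positive eigenfunction $\phi$, checks $\Delta u\ge\Delta_{V_1}u$ on $V$, and arrives at $\eta'\ge\eta^p-\lambda_1\eta$; the damping term $-\lambda_1\eta$ then forces the threshold $\eta(0)>\lambda_1^{1/(p-1)}$, i.e.\ $\lambda>\Lambda_1=\lambda_1^{1/(p-1)}\left(\int_V\psi\phi\,{\rm d}\mu\right)^{-1}$. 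You instead test against the constant function $1/\mathcal V(V)$ --- the eigenfunction of the zero eigenvalue of $-\Delta$ on a finite graph without boundary --- so the Laplacian term cancels exactly (by $\omega_{xy}=\omega_{yx}$) and Jensen gives the undamped inequality $\eta'\ge\eta^p$ with $\eta(0)>0$. This yields $T_\lambda\le\eta(0)^{1-p}/(p-1)<\infty$ for \emph{every} $\lambda>0$, which trivially implies the stated lemma and also subsumes the paper's Lemma 5.4 (Theorem 2.4(2)) without the hypothesis $\min_V\psi>0$. What the paper's construction buys is uniformity of method with the infinite-graph case (Lemma 4.1), where one must work on a finite subset and the Dirichlet eigenvalue is necessarily positive; what your argument buys is simplicity and a strictly stronger conclusion, exploiting precisely the feature (no boundary, hence a zero eigenvalue with positive eigenfunction) that is unavailable in the infinite setting. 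All the steps you flag --- the cancellation $\sum_{x\in V}\mu(x)\Delta u(x,t)=0$, Jensen for the probability measure $\phi\,{\rm d}\mu$, differentiability of $\eta$, and the passage from blow-up of $\eta$ to blow-up of $\|u(\cdot,t)\|_{\ell^\infty(V)}$ --- are sound on a finite graph.
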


\begin{proof} Select a vertex $\tilde x\in V$. We add a new vertex $z$ and a new edge $\tilde x\sim z$, and then define a new graph $G_1=G_1(V_1,E_1)$, where $V_1=V \cup\{z\}, E_1=E\cup\left\{\tilde xz\right\}$. We may extend $\omega$ to the set $E_1$ by setting $\tilde{\omega}_{yx}\arrowvert_{E}=\omega$, $\tilde{\omega}_{z\tilde{x}}=1$, for convenience, $\tilde{\omega}$ is still denoted by $\omega$, and extend $u(\cdot,t)$ to $\{z\}$ by letting $u(z,t)=0$. Let $\lambda_1$ be the smallest eigenvalue of the eigenvalue problem
	\begin{equation}\label{5.1}
\left\{\begin{array}{ll}
-\Delta_{V_1}\phi(x)=-\dd\frac{1}{\mu(x)}\sum\limits_{V_1\ni y\sim x}[\phi(y)-\phi(x)]\omega_{y x}=\lambda\phi(x),\;\;&x\in V,\\[3mm]
\phi(x)=0,&x\in\partial V_1=\left\{z\right\}
 \end{array}\right.	\end{equation}
and $\phi$ be the eigenfunction corresponding to $\lambda_1$ satisfying $\int_{V}\phi {\rm d}\mu=1$ and  $\phi>0$ on $V$.
Define $\eta(t)$ as in \eqref{4.2} with $\Omega=V$. Then
 $$\eta'(t)=\int_V \left[\Delta u(\cdot,t)+u^p(\cdot,t)\right]\phi(\cdot){\rm d}\mu.$$

We next show that
\begin{equation}\label{5.2}
\Delta u(x,t)\ge\Delta_{V_1}u(x,t)\;\;~\text{for}~(x,t)\in V\times(0, T_\lambda).
\end{equation}
For any $x\in V$. If $x=\tilde x$, recalling that $u(z,t)=0$, and $u(x,t)\ge 0$ for $(x,t)\in V\times(0, T_\lambda)$, we deduce that
\bess
\Delta_{V_1}u(\tilde x,t)&=&\frac{1}{\mu(\tilde x)}\sum_{y\in V_1,\,y\sim \tilde x}[u(y,t)-u(\tilde x,t)]\omega_{y\tilde x}\\
&=&\frac{1}{\mu(\tilde x)}\Bigg([u(z,t)-u(\tilde x,t)]\omega_{z\tilde x}
+\sum_{y\in V,\,y_{\sim}\tilde x}[u(y,t)-u(\tilde x,t)]\omega_{y \tilde x}\Bigg),\\
&\leq&\frac{1}{\mu(\tilde x)}\sum_{y\in V,\,y_{\sim}\tilde x}[u(y,t)-u(\tilde x,t)]\omega_{y \tilde x},\\
&=&\Delta u(\tilde x,t).
\eess
If $x\not=\tilde x$, it is easy to check that $\Delta u(x,t)=\Delta_{V_1}u(x,t)$. Thus, \eqref{5.2} holds.

Since $u(z,t)=0$, $\phi(z)=0$ and $\phi>0$ on $V$, integrating by parts and using \eqref{5.1} we have
\bess
\eta'(t)&=&\int_{V}\left(\Delta u(\cdot,t)+u^p(\cdot,t)\right)\phi(\cdot){\rm d}\mu\\
&\geq&\int_{V_{}}\left(\Delta_{V_1}u(\cdot,t)+u^p(\cdot,t)\right)\phi(\cdot)d \mu\\
&=&\int_{V_1}\left(\Delta_{V_1}u(\cdot,t)+u^p(\cdot,t)\right)\phi(\cdot){\rm d}\mu\\
&=&\int_{V_1}[u(\cdot,t)\Delta_{V_1}\phi(\cdot)+u^p(\cdot,t)]\phi(\cdot){\rm d}\mu\\
&=&-\lambda_1\int_{V_1}u(\cdot,t)\phi(\cdot){\rm d}\mu+\int_{V_1} u^p(\cdot,t)\phi(\cdot){\rm d}\mu\\
&=&-\lambda_1\int_Vu(\cdot,t)\phi(\cdot){\rm d}\mu+\int_V u^p(\cdot,t)\phi(\cdot){\rm d}\mu.
	\eess
Since $\int_{V}\phi {\rm d}\mu=1$, by Jensen's inequality, it follows that
\bess
\eta'(t)\geq-\lambda_1\int_Vu(\cdot,t)\phi(\cdot){\rm d}\mu+\left(\int_V u(\cdot,t)\phi(\cdot){\rm d}\mu\right)^p
\geq\eta^p(t)-\lambda_1\eta(t),\;\;0<t<T_\lambda.
\eess
Thus, by similar arguments as in the proof of Lemma \ref{l4.1}, we deduce that if $\lambda>\lambda_1^\frac1{{p-1}}\left(\int_V \psi\phi{\rm d}\mu\right)^{-1}=\Lambda_1$, then $T_\lambda <\infty$.
\end{proof}

By a similar discussion as in the proof of Lemma \ref{l4.2} (using Lemma 2.3 in \cite{Tian} instead of Lemma \ref{l3.2}), we obtain the following result.

\begin{lemma}\label{l5.2}Let $u_i\in C_V([0,t_i))\cap C^1_V((0,t_i))$, and $\lim\limits_{t\nearrow T_i}\|u_{i}(\cdot,t)\|_{\ell^{\infty}(V)}=\infty,~i=1,2$.
If	\begin{equation*}
		\left\{\begin{array}{ll}
\partial_tu_1-\Delta u_1\geq u_1^p,~&(x,t)\in V\times[0, t_1),\\[1.5mm]
u_1(x,0)\geq\psi(x),~&x \in V,
		\end{array}\right.
	\end{equation*}
and
\begin{equation*}
	\left\{\begin{array}{ll}
\partial_tu_2-\Delta u_2\leq u_2^p,~&(x,t)\in V\times\left[0, t_2\right),\\[1.5mm]
		u_2(x,0)\leq \psi(x),~&x\in V,
	\end{array}\right.
\end{equation*}
then $t_1\le t_2$.
\end{lemma}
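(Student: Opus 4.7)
The plan is to mirror the proof of Lemma \ref{l4.2}, arguing by contradiction and using the comparison principle on a finite graph (Lemma 2.3 of \cite{Tian}, as indicated by the hint) in place of the discrete Phragm\'en--Lindel\"of principle Lemma \ref{l3.2}. Since $G=(V,E)$ is finite, the subtlety of controlling the solution at ``infinity'' does not arise, so the argument is actually simpler than the one used for Lemma \ref{l4.2}.

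Suppose, for contradiction, that $t_1>t_2$. Fix an arbitrary $T'\in(0,t_2)$. On $V\times[0,T']$, both $u_1$ and $u_2$ belong to $C_V([0,T'])\cap C^1_V((0,T'])$, and since $V$ is finite, they are automatically bounded there. Set $w=u_1-u_2$. Writing
\begin{equation*}
u_1^p-u_2^p=\hat c(x,t)\,w,\qquad \hat c(x,t)=p\int_0^1[\,\theta u_1(x,t)+(1-\theta)u_2(x,t)\,]^{p-1}{\rm d}\theta,
\end{equation*}
we see that $\hat c$ is bounded on $V\times[0,T']$, and $w$ satisfies
\begin{equation*}
\left\{\begin{array}{ll}
w_t-\Delta w-\hat c(x,t)\,w\geq 0, & (x,t)\in V\times(0,T'],\\[1mm]
w(x,0)\geq 0, & x\in V.
\end{array}\right.
\end{equation*}

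Applying Lemma 2.3 of \cite{Tian} (the maximum/comparison principle on finite graphs) yields $w\geq 0$ on $V\times[0,T']$, i.e., $u_1\geq u_2$. Since $T'\in(0,t_2)$ is arbitrary, we conclude
\begin{equation*}
u_1(x,t)\geq u_2(x,t),\qquad (x,t)\in V\times[0,t_2).
\end{equation*}
The hypothesis $\lim_{t\nearrow t_2}\|u_2(\cdot,t)\|_{\ell^\infty(V)}=\infty$ then forces $\lim_{t\nearrow t_2}\|u_1(\cdot,t)\|_{\ell^\infty(V)}=\infty$. However, $t_2<t_1$ means that $u_1$ is defined on $V\times[0,t_2]$; being a continuous function on a finite vertex set over a compact time interval, it must be bounded, a contradiction. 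Hence $t_1\leq t_2$.

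The only step that requires any genuine checking is the invocation of the comparison principle for $w$. The hypotheses needed there, boundedness of $\hat c$ and of $w$ on $V\times[0,T']$, are free here because $V$ is finite and both $u_1,u_2$ are continuous in $t$; thus there is no analogue of the delicate ``chain of vertices'' construction carried out in the proof of Lemma \ref{l3.2}, and the argument reduces cleanly to the strategy used for Lemma \ref{l4.2}.
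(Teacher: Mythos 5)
Your proposal is correct and follows exactly the route the paper intends: the paper proves this lemma by the same contradiction argument as Lemma \ref{l4.2}, substituting the finite-graph comparison principle (Lemma 2.3 of \cite{Tian}) for Lemma \ref{l3.2}, which is precisely what you do, with the added benefit of spelling out the linearization $u_1^p-u_2^p=\hat c\,w$ and the boundedness checks that the paper leaves implicit.
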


\begin{lemma}\label{l5.3} The following holds:
	\begin{equation}\label{5.3}
\lim_{\lambda\to \infty}\lambda^{p-1}T_\lambda =\frac1{(p-1)\big(\max_V \psi\big)^{p-1}}.
	\end{equation}
\end{lemma}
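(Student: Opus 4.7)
The plan is to prove \eqref{5.3} by establishing matching $\liminf$ and $\limsup$ bounds, set $M:=\max_V\psi>0$.

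For the lower bound, I would use the spatially constant upper solution
\begin{equation*}
\bar u(t)=\left[(\lambda M)^{-(p-1)}-(p-1)t\right]^{-\frac{1}{p-1}},\qquad 0\le t<T_*:=\frac{1}{(p-1)(\lambda M)^{p-1}}.
\end{equation*}
Since $\Delta\bar u\equiv 0$, $\bar u'(t)=\bar u^p(t)$, and $\bar u(0)=\lambda M\ge\lambda\psi(x)$ on $V$, $\bar u$ is a bounded upper solution of \eqref{1.6} on every subinterval of $[0,T_*)$ and blows up at $T_*$. If $T_\lambda<\infty$, Lemma \ref{l5.2} applied to the super/sub pair $(\bar u,u)$ yields $T_*\le T_\lambda$; if $T_\lambda=\infty$ this is trivial. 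Hence $\lambda^{p-1}T_\lambda\ge\frac{1}{(p-1)M^{p-1}}$, so $\liminf_{\lambda\to\infty}\lambda^{p-1}T_\lambda\ge\frac{1}{(p-1)M^{p-1}}$.

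For the upper bound I pick $\bar x\in V$ with $\psi(\bar x)=M$ and invoke Lemma \ref{l4.6} (whose hypothesis $D_\mu<\infty$ is automatic on a finite graph): for every $t\in(0,T_\lambda)$,
\begin{equation*}
\left(\lambda F(t)\right)^{1-p}\ge(p-1)t,\qquad F(t):=\sum_{y\in V}P(t,\bar x,y)\psi(y)\mu(y).
\end{equation*}
By the defining property of the heat kernel recalled in Section \ref{s2.2}, $F$ is continuous on $[0,\infty)$ with $F(0)=\psi(\bar x)=M$. Rearranging,
\begin{equation*}
\lambda^{p-1}t\le\frac{1}{(p-1)F(t)^{p-1}},\qquad t\in(0,T_\lambda).
\end{equation*}

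The main obstacle is to push $t$ up to $T_\lambda$ while keeping $F(t)$ close to $M$, which forces us to first control the size of $T_\lambda$. I would do this via the eigenvalue argument already set up in the proof of Lemma \ref{l5.1}: repeating the Step 4 estimate of Lemma \ref{l4.3} on the augmented graph $G_1$ with the normalized $\phi$ of \eqref{5.1} gives, for $\lambda$ large enough,
\begin{equation*}
T_\lambda\le-\frac{1}{\lambda_1(p-1)}\ln\!\left(1-\frac{\lambda_1}{\lambda^{p-1}\bigl(\int_V\psi\phi\,{\rm d}\mu\bigr)^{p-1}}\right),
\end{equation*}
whose right-hand side tends to $0$ as $\lambda\to\infty$, so $T_\lambda\to 0$. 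Now fix $\varepsilon\in(0,M)$; continuity of $F$ at $0$ yields $\delta>0$ with $F(t)\ge M-\varepsilon$ on $[0,\delta]$, and for all sufficiently large $\lambda$ one has $T_\lambda<\delta$. Letting $t\nearrow T_\lambda$ in the rearranged inequality gives
\begin{equation*}
\lambda^{p-1}T_\lambda\le\frac{1}{(p-1)(M-\varepsilon)^{p-1}}.
\end{equation*}
Sending $\lambda\to\infty$ and then $\varepsilon\to 0^+$ produces $\limsup_{\lambda\to\infty}\lambda^{p-1}T_\lambda\le\frac{1}{(p-1)M^{p-1}}$, which together with the lower bound gives \eqref{5.3}.
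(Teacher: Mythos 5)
Your proposal is correct, and its lower-bound half coincides with the paper's: both use the spatially constant ODE supersolution $\bar w(t)=\big[(\lambda\max_V\psi)^{-(p-1)}-(p-1)t\big]^{-1/(p-1)}$ together with the finite-graph comparison Lemma \ref{l5.2} to get $\lambda^{p-1}T_\lambda\ge\frac{1}{(p-1)(\max_V\psi)^{p-1}}$. For the upper bound, however, you take a genuinely different route. The paper repeats the Kaplan first-eigenvalue argument of Lemma \ref{l4.3} with the singleton $\Omega=\{\tilde x\}$ at a maximizer of $\psi$; since the normalized eigenfunction satisfies $\phi(\tilde x)\mu(\tilde x)=1$, the quantity $\int_\Omega\psi\phi\,{\rm d}\mu$ equals $\max_V\psi$ and the sharp constant drops directly out of the logarithmic bound on $T_\lambda$. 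You instead invoke the heat-kernel inequality of Lemma \ref{l4.6} (the tool the paper reserves for Lemma \ref{l5.4}), exploiting continuity of $t\mapsto\sum_{y\in V}P(t,\bar x,y)\psi(y)\mu(y)$ at $t=0$ with value $\psi(\bar x)=\max_V\psi$; this needs the extra a priori input $T_\lambda\to 0$ as $\lambda\to\infty$, which you correctly supply from a cruder form of the eigenvalue argument on the augmented graph constructed in the first lemma of Section 5. So your upper bound is not independent of the Kaplan method, but it only uses the qualitative consequence $T_\lambda\to 0$ rather than the quantitatively sharp version; a side benefit is that you avoid posing the Dirichlet problem \eqref{2.1} on a one-point $\Omega$, whose interior $\Omega^\circ$ is empty under the paper's conventions. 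Both arguments are sound and yield \eqref{5.3}.
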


\begin{proof} Clearly, there exists $\tilde x\in V$ such that $\psi(\tilde x)=\max\limits_{V}\psi=\psi_M$. Let $\Omega=\{\tilde x\}$, and $\lambda_1$ be the smallest eigenvalue to the eigenvalue problem \eqref{2.1} and $\phi(x)$ be the normalized eigenfunction corresponding to $\lambda_1$. Let $\eta(t)$ be defined by  \eqref{4.2}. Similar to the argument as in the proof of Lemma \ref{l4.3}, we deduce that
\begin{equation*}
	\eta'(t)\geq-\lambda_1 \eta(t)+\eta^p(t),\;\;\; t\in[0, T_\lambda),
\end{equation*}
and that
\begin{equation}
\limsup_{\lambda\to \infty}\lambda^{p-1} T_\lambda \leq\frac1{(p-1)\left(\int_{\Omega}\psi\phi {\rm d}\mu\right)^{p-1}}=\frac1{(p-1)\psi^{p-1}\left(\tilde x\right)}=\frac1{(p-1)\psi_M^{p-1}}.\label{x5.2}
\end{equation}
Let $\bar{w}(t)=\big[\big(\lambda\psi_{M}\big)^{-(p-1)}-(p-1)t\big]^{-\frac1{p-1}}$ and $T_0=\frac1{(p-1)\lambda^{p-1}\psi_M^{p-1}}$. Then $\lim\limits_{t\nearrow T_0}\bar{w}(t)=\infty$ and
 \bess
\bar{w}'(t)=\bar{w}^{p}(t), \;\;t\in [0,T_0);\;\;\;\;
\bar{w}(0)=\lambda\psi_{M}\ge\lambda\psi(x)=u(x,0),~~x\in V.
 \eess
Recall that $\lim\limits_{t\nearrow T_\lambda}\|u(\cdot,t)\|_{\ell^{\infty}(V)}=\infty$. By Lemma \ref{l5.2}, $T_0\leq T_\lambda$, and hence $\frac1{(p-1)\psi_M^{p-1}}\leq\dd\liminf_{\lambda\to \infty}\lambda^{p-1}T_\lambda$. This combines with \eqref{x5.2} gives \eqref{5.3}.
\end{proof}

\begin{lemma}\label{l5.4}
	If $\min\limits_{V}\psi(x)>0$, then $T_\lambda <\infty$ for every $\lambda>0$.
\end{lemma}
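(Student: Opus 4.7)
The plan is to construct an explicit spatially constant subsolution that blows up in finite time, and then invoke the comparison principle (Lemma \ref{l5.2}) to force the genuine solution of \eqref{1.6} to blow up no later.

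Since $V$ is finite, $m := \min\limits_{V}\psi(x) > 0$ is attained. First I would consider the ODE
\begin{equation*}
v'(t) = v^{p}(t), \qquad v(0) = \lambda m,
\end{equation*}
whose explicit solution is $v(t) = \big[(\lambda m)^{-(p-1)} - (p-1)t\big]^{-1/(p-1)}$ on $[0, T^{*})$, with $T^{*} = \dfrac{1}{(p-1)(\lambda m)^{p-1}} < \infty$ and $\lim\limits_{t\nearrow T^{*}} v(t) = +\infty$.

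Viewing $v$ as a function on $V \times [0, T^{*})$ that is constant in the spatial variable, we immediately have $\Delta v \equiv 0$, hence
\begin{equation*}
v_{t} - \Delta v = v^{p} \;\; \text{ on } V \times [0, T^{*}),
\end{equation*}
and by choice of $m$,
\begin{equation*}
v(x,0) = \lambda m \le \lambda \psi(x) = u(x,0), \quad x\in V.
\end{equation*}
Thus $v$ satisfies the lower-solution hypothesis of Lemma \ref{l5.2}, while the genuine solution $u$ of \eqref{1.6} trivially satisfies the upper-solution hypothesis with equality. Applying Lemma \ref{l5.2} with $u_{1} = u$, $t_{1} = T_{\lambda}$ and $u_{2} = v$, $t_{2} = T^{*}$ (noting that if $T_{\lambda}$ were infinite, we could still compare on $[0, T^{*})$ and $u$ would blow up by $T^{*}$ by the same argument), we conclude $T_{\lambda} \le T^{*} < \infty$, which is what was required.

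The proof is short because the finiteness of $V$ and the uniform positive lower bound on $\psi$ remove any geometric or tail complications: no curvature, volume, or eigenvalue hypothesis on $G$ is needed, and the barrier can be taken to be a pure ODE. The only point that requires care is checking that $v$ satisfies the regularity and boundedness needed to apply Lemma \ref{l5.2}, i.e., $v \in C_{V}([0, T']) \cap C^{1}_{V}((0, T'))$ with $v \in L^{\infty}(V \times [0, T'])$ for every $T' < T^{*}$; these are all automatic from the explicit formula.
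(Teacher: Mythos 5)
Your proof is correct, but it follows a genuinely different route from the paper's. The paper proves this lemma with the heat-kernel machinery: it invokes the integral representation of Theorem \ref{t3.1}, repeats the Jensen-inequality argument of Lemma \ref{l4.6} to get $\bigl(\sum_{y\in V}P(t,\tilde x,y)\lambda\psi(y)\mu(y)\bigr)^{1-p}\ge(p-1)t$ for $0<t<T_\lambda$, and then uses stochastic completeness ($\sum_{y}\mu(y)P(t,\tilde x,y)=1$, valid since $D_\mu<\infty$ on a finite graph) together with $\psi\ge\min_V\psi$ to conclude $(p-1)t\le(\lambda\min_V\psi)^{1-p}$. You instead build the spatially constant ODE subsolution $v(t)=\bigl[(\lambda m)^{-(p-1)}-(p-1)t\bigr]^{-1/(p-1)}$ and compare. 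Both arguments yield exactly the same bound $T_\lambda\le\frac{1}{(p-1)(\lambda\min_V\psi)^{p-1}}$; yours is more elementary (no heat kernel, no Jensen) and in fact already delivers the upper estimate that the paper only extracts later, in Lemma \ref{l5.5}, via the subsolution $S_2$. The paper's heat-kernel route has the advantage that it is the same mechanism that works on infinite graphs where $\psi$ is only bounded below near infinity (Lemmas \ref{l4.4}--\ref{l4.6}), which is presumably why the authors reuse it here. One point you rightly flag but should make fully explicit: Lemma \ref{l5.2} as stated assumes $\lim_{t\nearrow t_1}\|u_1(\cdot,t)\|_{\ell^\infty(V)}=\infty$, i.e.\ it presupposes that $u$ blows up at $T_\lambda$, which is precisely what is to be proved; so you cannot cite that lemma verbatim. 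The fix is the one you sketch in your parenthetical: assume $T_\lambda>T^*$, apply the underlying comparison principle (Lemma 2.3 of \cite{Tian}, as in the proof of Lemma \ref{l5.2}) on $[0,T']$ for every $T'<T^*$ to get $u\ge v$ there, and contradict the boundedness of $u$ on $V\times[0,T^*]$. With that step spelled out, the argument is complete.
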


\begin{proof}Since $G$ is finite, we know that
 \begin{equation*}
		D_{\mu}=\sup_{x\in V}\frac 1{\mu(x)}\sum\limits_{y\in V:y\sim x}\omega_{xy}=\max_{x\in V}\frac 1{\mu(x)}\sum\limits_{y\in V:y\sim x}\omega_{xy}<\infty.
	\end{equation*}
Hence, by Lemma \ref{l3.3}, for any $\tilde x \in V$,
  \begin{equation}\label{5.6}
	\sum_{y\in V}\mu(y)P(t,\tilde x,y)=1,\quad 0<t<T_\lambda .
 \end{equation}
Similar to the arguments as in the proof of Lemma \ref{l4.6} we can get
	\begin{equation}\label{5.5.}
		\left(\sum_{y\in V}P(t,\tilde x,y)\lambda\psi(y)\mu(y)\right)^{1-p}\geq(p-1)t,\quad 0<t<T_\lambda.
	\end{equation}
Since $\psi(x)\ge \min\limits_{V}\psi(x)>0$ for $x\in V$, it follows from \eqref{5.5.} and \eqref{5.6} that $(p-1)t\le(\lambda\min_{V}\psi)^{1-p}$ for $0<t<T_\lambda$. This implies $T_\lambda<\infty$.
\end{proof}

\begin{lemma}\label{l5.5}
	Suppose $\min\limits_{V}\psi(x)>0$. Then
	\begin{equation}\label{5.5}
\frac1{(p-1)\big(\max_{V}\psi\big)^{p-1}}\leq\liminf_{\lambda\to 0}\lambda^{p-1}T_\lambda \leq\limsup_{\lambda\to 0}\lambda^{p-1}T_\lambda \leq\frac1{(p-1){\big(\min_{V}\psi}\big)^{p-1}}.
	\end{equation}
\end{lemma}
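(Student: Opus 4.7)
The plan is to derive two matching estimates on $T_\lambda$ that are valid for \emph{every} $\lambda>0$, so that the desired $\liminf$/$\limsup$ inequalities follow immediately upon multiplying by $\lambda^{p-1}$. The ingredients are already in place: the integral inequality of Lemma~\ref{l5.4} gives the upper estimate, while the comparison principle of Lemma~\ref{l5.2} together with a spatially constant upper solution gives the lower estimate.

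For the upper bound on $T_\lambda$, I would fix any $\tilde x\in V$ and invoke the inequality established inside the proof of Lemma~\ref{l5.4} (via the Jensen argument in Lemma~\ref{l4.6}), namely
\[
\left(\lambda\sum_{y\in V}P(t,\tilde x,y)\psi(y)\mu(y)\right)^{1-p}\geq (p-1)t,\qquad 0<t<T_\lambda.
\]
Since $G$ is finite, Lemma~\ref{l3.3} yields $\sum_{y\in V}P(t,\tilde x,y)\mu(y)=1$, and $\psi(y)\ge \min_V\psi>0$, hence
\[
\sum_{y\in V}P(t,\tilde x,y)\psi(y)\mu(y)\ge \min_V\psi.
\]
Because $1-p<0$, the map $s\mapsto s^{1-p}$ is decreasing, and the previous two displays combine to
\[
(p-1)t\le \bigl(\lambda\min_V\psi\bigr)^{1-p},\qquad 0<t<T_\lambda.
\]
Letting $t\nearrow T_\lambda$ yields $T_\lambda\le \frac{1}{(p-1)\lambda^{p-1}(\min_V\psi)^{p-1}}$, and multiplying through by $\lambda^{p-1}$ gives the desired upper bound uniformly in $\lambda$, so $\limsup_{\lambda\to 0}\lambda^{p-1}T_\lambda\le \frac{1}{(p-1)(\min_V\psi)^{p-1}}$.

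For the lower bound on $T_\lambda$, I would construct the spatially constant candidate
\[
\bar v(t)=\bigl[(\lambda\max_V\psi)^{-(p-1)}-(p-1)t\bigr]^{-\frac{1}{p-1}},
\]
defined on $[0,T_*)$ with $T_*=\tfrac{1}{(p-1)\lambda^{p-1}(\max_V\psi)^{p-1}}$. A direct check shows $\bar v'(t)=\bar v^{\,p}(t)$ and $\Delta\bar v\equiv 0$, together with $\bar v(0)=\lambda\max_V\psi\ge \lambda\psi(x)$ for every $x\in V$. Thus $\bar v$ is a supersolution of \eqref{1.6}, and $\lim_{t\nearrow T_*}\bar v(t)=\infty$. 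Applying the comparison principle for blow-up times on finite graphs (Lemma~\ref{l5.2}) to the solution $u$ of \eqref{1.6} and to $\bar v$ gives $T_*\le T_\lambda$, i.e.
\[
\lambda^{p-1}T_\lambda \ge \frac{1}{(p-1)(\max_V\psi)^{p-1}}\qquad\text{for all }\lambda>0,
\]
which in particular yields the required lower bound for $\liminf_{\lambda\to 0}\lambda^{p-1}T_\lambda$.

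There is essentially no hard step: both halves are one-line corollaries of lemmas already proved. The only point that deserves minor care is ensuring that the hypotheses of Lemma~\ref{l5.4} (hence of Lemma~\ref{l4.6}), specifically $D_\mu<\infty$ and the validity of the heat-kernel representation $\sum_y P(t,x,y)\mu(y)=1$, are available in the present finite-graph setting; this is automatic since $G$ finite forces $D_\mu<\infty$ and gives access to Lemma~\ref{l3.3}. Concatenating the two one-sided estimates then produces the chain \eqref{5.5}.
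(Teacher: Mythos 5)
Your proposal is correct. The lower half ($t_1\le T_\lambda$ via the spatially constant supersolution $\bar v$ and Lemma \ref{l5.2}) is exactly the paper's argument. For the upper half you diverge: the paper compares $u$ with a second ODE solution $S_2(t)=\bigl[(\lambda\min_V\psi)^{-(p-1)}-(p-1)t\bigr]^{-\frac{1}{p-1}}$, which is a subsolution blowing up at $t_2=\frac{1}{(p-1)\lambda^{p-1}(\min_V\psi)^{p-1}}$, and invokes Lemma \ref{l5.2} a second time (using that $u$ itself blows up, which is supplied by Lemma \ref{l5.4}) to get $T_\lambda\le t_2$. You instead extract the quantitative content already present in the proof of Lemma \ref{l5.4}: the heat-kernel/Jensen inequality $\bigl(\lambda\sum_y P(t,\tilde x,y)\psi(y)\mu(y)\bigr)^{1-p}\ge(p-1)t$ together with $\sum_y P(t,\tilde x,y)\mu(y)=1$ and $\psi\ge\min_V\psi$ gives $(p-1)t\le(\lambda\min_V\psi)^{1-p}$ directly. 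Both routes are valid; yours has the small advantage of not needing to know in advance that $u$ blows up at $T_\lambda$ (a hypothesis of Lemma \ref{l5.2}), while the paper's is more symmetric and avoids re-entering the heat-kernel machinery. Your closing remark that finiteness of $G$ supplies $D_\mu<\infty$ and Lemma \ref{l3.3} matches the observation made in the paper's proof of Lemma \ref{l5.4}.
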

\begin{proof}
It is easily seen that functions
 $$		 S_1(t)=\Big(\big(\lambda\max_{V}\psi\big)^{-(p-1)}-(p-1)t\Big)^{-\frac1{p-1}},\;\;\;
 S_2(t)=\Big(\big(\lambda\min_{V}\psi\big)^{-(p-1)}-(p-1)t\Big)^{-\frac1{p-1}}
	$$
satisfy
	\bess
	&&S_1'(t)=S_1^p(t),\quad t\in\left[0,t_1\right),\\
&&S_2'(t)=S_2^p(t),\quad t \in\left[0,t_2\right),\\
	&&S_1(0)\geq \lambda \psi(x)=u(x,0)\geq S_2(0),
 \eess
and $\lim_{t\nearrow t_1}S_1(t)=\lim_{t\nearrow t_2}S_2(t)=\infty$, where
$$t_1=\frac1{(p-1)\lambda^{p-1}{(\max_{V}\psi(x)})^{p-1}},\;\;\; t_2=\frac1{(p-1)\lambda^{p-1}{(\min_{V}\psi(x)})^{p-1}}.$$
Recalling that $\lim\limits_{t\nearrow T_\lambda}\|u(\cdot,t)\|_{\ell^{\infty}(V)} =\infty$. We can apply Lemma \ref{l5.2} to  $S_1(t), u(x,t)$ and $S_2(t)$ to deduce that
$t_1\leq T_\lambda\leq t_2$. Thus \eqref{5.5} holds.
\end{proof}

Theorem \ref{t2.4} follows from Lemmas \ref{l5.3}, \ref{l5.4} and \ref{l5.5} immediately.

\section{Proof of Theorem \ref{t1.6}}

For a graph $G=(V,E)$, any $\beta>0$ and $r\in \mathbb{Z}^{+}$, define
  \bess
    D(\beta; r)=\sup_{x \in V}\frac{\mathcal{V}\big( B_x^r\cap\{y:\, \phi(y)\ge\beta\}\big)}{\mathcal{V}\big( B_x^r\big)},\;\;\;
   \bar{D}(\beta)= \limsup_{\mathbb{Z}^{+}\ni r\to +\infty}D(\beta;r).
  \eess

\begin{lemma}\label{l6.1} Let $G=(V, E)$ be an infinite locally finite graph. Suppose $D_\mu<+\infty$ and $\mu(x) \equiv 1$ on $V$. Assume that the heat kernel $P=P(t,\cdot,\cdot;G)$ on $G$ satisfies
\begin{equation}\label{6.1}
 P(t,x,y)=P(t,z,w)
 \end{equation}
for any $x,y,z,w\in V$ satisfying $d(x,y)=d(z,w)$ and any $t>0$. If there exist $m\in\mathbb{R}$ and a constant $A=A(G)>0$ such that
 \begin{equation}\label{6.2}
\mathcal{V}(x,r)=Ar^m+O\left(r^{m-1}\right),\;\;\forall\; x\in V,\; r\in\mathbb{R}\text {, }
 \end{equation}
and there exists $\beta>0$ so that $\bar{D}(\beta)>0$, then
 \bes\label{6.3}
    T_1 \le \frac{1}{p-1}(\beta \bar{D}(\beta))^{1-p},
 \ees
 where $T_1$ is the life span of the solution of the problem \eqref{1.6} with $\lambda=1$.
\end{lemma}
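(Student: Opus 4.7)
The strategy is to combine Lemma~\ref{l4.6} (the integral inequality at $\lambda=1$) with an averaging lower bound on the heat-kernel mass of the super-level set $W:=\{y\in V:\psi(y)\ge\beta\}$. Since $\psi\ge\beta\mathbf{1}_W$ pointwise and $\mu\equiv 1$, Lemma~\ref{l4.6} yields
\begin{equation*}
\beta\sum_{y\in W}P(t,\bar x,y)\le\sum_{y\in V}P(t,\bar x,y)\psi(y)\le\bigl[(p-1)t\bigr]^{-1/(p-1)},\;\;\forall\,\bar x\in V,\;\forall\,t\in(0,T_1).
\end{equation*}
Hence it suffices to prove that, for every fixed $t>0$,
\begin{equation*}
\sup_{\bar x\in V}\sum_{y\in W}P(t,\bar x,y)\ge\bar D(\beta),
\end{equation*}
since then $\beta\bar D(\beta)\le[(p-1)t]^{-1/(p-1)}$ for all $t\in(0,T_1)$, which is precisely the claimed upper bound on $T_1$.

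To establish the supremum estimate I fix $t>0$, bound the supremum from below by an average over a large ball centred at some $x_0\in V$, and use the symmetry $P(t,\bar x,y)=P(t,y,\bar x)$ from Proposition~\ref{p2.2}(iii) to swap sums:
\begin{equation*}
\sup_{\bar x}\sum_{y\in W}P(t,\bar x,y)\ge\frac{1}{\mathcal{V}(x_0,R)}\sum_{y\in W}Q_R(y,x_0),\qquad Q_R(y,x_0):=\sum_{\bar x\in B_{x_0}^R}P(t,y,\bar x).
\end{equation*}
By the triangle inclusion $B_y^{R-d(y,x_0)}\subset B_{x_0}^R$ and the radial symmetry \eqref{6.1}, one has $Q_R(y,x_0)\ge\sum_{k=0}^{R-d(y,x_0)}p_t(k)N_k(y)$, where $p_t(k):=P(t,z,w)$ for any $z,w$ at distance $k$ and $N_k(y):=\#\{\bar x:d(y,\bar x)=k\}$. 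The crux is a uniform tail estimate: for every $\eta>0$ there is $K_0=K_0(t,\eta)$ with $\sum_{k>K_0}p_t(k)N_k(y)\le\eta$ for every $y\in V$. This follows because \eqref{6.2} (applied to $\mathcal{V}(y,k)$ and $\mathcal{V}(y,k-1)$) yields $N_k(y)\le Ck^{m-1}+C_0$ uniformly in $y$, while Lemma~\ref{l3.3} at any reference vertex forces $\sum_k p_t(k)k^{m-1}<\infty$, whence the tails are uniformly small. Granted this, $Q_R(y,x_0)\ge 1-\eta$ whenever $d(y,x_0)\le R-K_0$, and therefore
\begin{equation*}
\sup_{\bar x}\sum_{y\in W}P(t,\bar x,y)\ge(1-\eta)\,\frac{\mathcal{V}(B_{x_0}^{R-K_0}\cap W)}{\mathcal{V}(x_0,R)}.
\end{equation*}
Choosing $x_0$ to nearly realise $D(\beta;R-K_0)$, sending $R\to\infty$ along a subsequence for which $D(\beta;R-K_0)\to\bar D(\beta)$, and invoking \eqref{6.2} to conclude $\mathcal{V}(x_0,R-K_0)/\mathcal{V}(x_0,R)\to 1$ (with $K_0$ fixed), one obtains the desired supremum estimate after letting $\eta\to 0$.

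The main obstacle is the uniform tail estimate above: the radial symmetry \eqref{6.1} is what reduces the tail of $P(t,y,\cdot)$ to a function of the shell counts $N_k(y)$ alone, and the uniform polynomial volume growth \eqref{6.2} is what makes those shell counts controllable uniformly in $y$. Both hypotheses are genuinely used, and the interplay between the \emph{geometric} lower bound on the density of $W$ in a ball and the \emph{analytic} spreading of the heat kernel across that ball is the technical heart of the argument.
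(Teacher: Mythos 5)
Your overall architecture is the same as the paper's: reduce to the single estimate $\sup_{\bar x\in V}\sum_{y\in V}P(t,\bar x,y)\psi(y)\ge\beta\bar D(\beta)$ via Lemma \ref{l4.6} (this is exactly \eqref{6.13}), and prove that estimate by averaging the heat-kernel mass over a large ball that nearly realises $\bar D(\beta)$, discarding a boundary layer whose volume is negligible by \eqref{6.2}. Your use of a \emph{fixed} truncation radius $K_0$ (versus the paper's $r_i=[\sqrt{s_i}]$) and your early replacement of $\psi$ by $\beta\chi_W$ are harmless cosmetic differences. However, there is one genuine gap, and it sits at the technical heart you yourself identify: the uniform tail estimate. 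You claim that Lemma \ref{l3.3} at a reference vertex $z_0$, i.e.\ $\sum_{k}p_t(k)N_k(z_0)=1$, ``forces'' $\sum_k p_t(k)k^{m-1}<\infty$. That implication requires a pointwise lower bound $N_k(z_0)\ge c\,k^{m-1}$ on the individual shell counts, and \eqref{6.2} does not give this: it controls only the cumulative sums $\sum_{j\le k}N_j(z_0)=Ak^m+O(k^{m-1})$, and the error term $E_k-E_{k-1}$ in $N_k(z_0)=Am\,k^{m-1}+O(k^{m-2})+(E_k-E_{k-1})$ can be as large as $2Ck^{m-1}$ in absolute value, so individual shells may be far smaller than $k^{m-1}$ (for an infinite connected graph one only gets $N_k\ge 1$). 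Consequently $\sum_k p_t(k)N_k(z_0)=1$ is compatible, as far as your stated hypotheses go, with $\sum_k p_t(k)k^{m-1}=\infty$, and the uniform smallness of $\sum_{k>K_0}p_t(k)N_k(y)$ does not follow.

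The paper closes this same point differently, in \eqref{6.7}: it uses the radiality hypothesis \eqref{6.1} to assert that the truncated mass $\sum_{x\in B_y^{r}}P(t,x,y)=\sum_{k\le r}p_t(k)N_k(y)$ is the \emph{same for every vertex} $y$ (equivalently, that the shell counts $N_k(y)$ do not depend on $y$, as is the case for $\mathbb{Z}^N$ and $T_q$). Granted that, stochastic completeness at a single reference vertex immediately makes the tails $\sum_{k>K_0}p_t(k)N_k(y)$ small uniformly in $y$, which is precisely \eqref{6.8}. Your proof is repaired by substituting this observation for the volume-counting step: everything else in your argument --- the averaging inequality, the inclusion $B_y^{R-d(y,x_0)}\subset B_{x_0}^R$, the passage $\mathcal{V}(x_0,R-K_0)/\mathcal{V}(x_0,R)\to 1$, and the final limit in $t\nearrow T_1$ --- checks out.
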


\begin{proof} {\it Step 1}. As $\bar{D}(\beta)>0$, for any $\varepsilon>0$, there exist $z_i\in V$, $s_i\in\mathbb{Z}^{+}$, $i=1,2,\cdots$, such that $\lim_{i \to\infty}s_i=+\infty$ and
 \bes
 \frac{\mathcal{V}\bigl(B_{z_i}^{s_i}\cap\{y:\, \phi(y)\ge\beta\}\bigr)}{\mathcal{V}(B_{z_i}^{s_i})}
\ge\bar{D}(\beta)-\varepsilon.\label{6.4}
\ees
In the following, we let $r_i=[\sqrt{s_i}]$, where $[\sqrt{s_i}]$ is the greatest integer not more than $\sqrt{s_i}$.

We claim that for all $\delta \in (0,1)$, there exists $K\in\mathbb{N}$ so that for any $i>K$
 \bes\label{6.5}
&&\sup_{x\in B_{z_i}^{s_i-r_i}}\sum_{y\in B_{z_i}^{s_i}}\chi_{B_x^{r_i}(y)}P(T_{1},x,y)\phi(y)\nm\\
&\ge&\frac{1-\delta}{\mathcal{V}(B_{z_i}^{s_i-r_i})}
\Big\{\beta\bigl(\bar{D}(\beta)
-\varepsilon\bigr)\mathcal{V}\left(B_{z_i}^{s_i}\right)-\|\phi\|_{\ell^{\infty}(V)}\bigl[\mathcal{V}\left( B_{z_i}^{s_i}\setminus B_{z_i}^{s_i-2r_i}\right)\bigr]\Big\}.
 \ees
Here $\chi_{E}$ is the characteristic function of the set $E$.

Define a sequence of functions $F_i:
    B_{z_i}^{s_i} \to [0,+\infty)$ as
    \begin{equation*}
        F_i(x)=\sum_{y\in B_{z_i}^{s_i}}\chi_{B_x^{r_i}(y)}P(T_{1},x,y)\phi(y).
    \end{equation*}
Since $B_{z_i}^{s_i-r_i}\subset B_{z_i}^{s_i}$, we have
  \bes\label{6.6}
    \sum_{x\in B_{z_i}^{s_i-r_i}}F_i(x)&=&\sum_{x\in B_{z_i}^{s_i}}\chi_{B_{z_i}^{s_i-r_i}(x)}F_i(x)\nm \\
    &=&\sum_{x\in B_{z_i}^{s_i}}\chi_{B_{z_i}^{s_i-r_i}(x)}
    \sum_{y\in B_{z_i}^{s_i}}\chi_{B_x^{r_i}(y)}P(T_{1},x,y)\phi(y)\nm\\
    &=&\sum_{y\in B_{z_i}^{s_i}} \Bigl(\sum_{x\in B_{z_i}^{s_i}}\chi_{B_{z_i}^{s_i-r_i}(x)} \chi_{B_x^{r_i}(y)}P(T_{1},x,y)\,\Bigr)\phi(y)\nm \\
    &=&\sum_{y\in B_{z_i}^{s_i}}I_i(y)\phi(y),	
    \ees
where
    \begin{equation*}
        I_i (y)=\sum_{x\in B_{z_i}^{s_i}}\chi_{B_{z_i}^{s_i-r_i}(x)}
        \chi_{B_x^{r_i}(y)}P(T_{1},x,y).
    \end{equation*}
Fix a vertex $z \in V$. Take advantage \eqref{6.1} it yields
 \bes\label{6.7}
    \sum_{x\in V}\chi_{B_y^{r_i}(x)}P(T_{1},x,y)
    =\sum_{x\in B_y^{r_i}}P(T_{1},x,y)
    =\sum_{x\in V\cap B_z^{r_i}}P(T_{1},z,x).
    \ees
On the other hand, due to $D_{\mu}<\infty$, by Proposition \ref{p2.4}, we have $\sum_{x\in V}P(T_{1},z,x)=1$. This combines with \eqref{6.7} implies that for any fixed $\delta>0$ there exists $R=R(\delta)>0$ such that
 \bess
  \sum_{x\in V}\chi_{B_y^{r_i}(x)}P(T_{1},x,y)
  \ge1-\delta,\;\;\forall\;y\in V,\;r_i>R.
    \eess
Since $r_i=[\sqrt{s_i}]\to\infty$ as $i\to\infty$, there exists $K>0$ such that $r_i>R$ for $i>K$. Thus,
 \begin{equation}\label{6.8}
\sum_{x\in V}\chi_{B_y^{r_i}(x)}P(T_{1},x,y)\ge 1-\delta~\text{for}~i>K.
    \end{equation}
In the following proof we shall always take $i>K$. For $y\in B_{z_i}^{s_i-2r_i}$, it is clear that $\chi_{B_y^{r_i}(x)}=\chi_{B_x^{r_i}(y)}$ and $B_y^{r_i}\subset B_{z_i}^{s_i-r_i}$. Then we have, by \eqref{6.8},
\begin{equation*}
\begin{split}
I_i(y)&=\sum_{x\in B_{z_i}^{s_i-r_i}}\chi_{B_x^{r_i}(y)}P(T_{1},x,y)\\
&=\sum_{x\in B_{z_i}^{s_i-r_i}}\chi_{B_y^{r_i}(x)}P(T_{1},x,y)\\
&=\sum_{x\in V}\chi_{B_y^{r_i}(x)}P(T_{1},x,y)\ge 1-\delta
\end{split}
    \end{equation*}
for $y\in B_{z_i}^{s_i-2r_i}$. Consequently,
 \bess
I_i(y)\ge\begin{cases}
1-\delta, \, &y \in B_{z_i}^{s_i-2r_i},\\
 0,& y \in B_{z_i}^{s_i} \setminus B_{z_i}^{s_i-2r_i}.
 \end{cases}\eess
This combines with \eqref{6.6} gives
\begin{equation*}
\begin{split}
\sum_{x\in B_{z_i}^{s_i-r_i}}F_i(x)
&=\sum_{y\in B_{z_i}^{s_i}}I_i(y)\phi(y)\\
&=\sum_{y\in B_{z_i}^{s_i-2r_i}}\!\!I_i(y)\phi(y)
+\sum_{y\in B_{z_i}^{s_i}\setminus B_{z_i}^{s_i-2r_i}}\!\!\!\!I_i(y)\phi(y)\\
&\ge\sum_{y\in B_{z_i}^{s_i-2r_i}}\!\!I_i(y)\phi(y)\,\ge
(1-\delta)\sum_{y\in B_{z_i}^{s_i-2r_i}}\phi(y).
        \end{split}
\end{equation*}
Let $x_i \in B_{z_i}^{s_i-r_i}$ so that $F_i(x_i)=\max\limits_{B_{z_i}^{s_i-r_i}}F_i(\cdot)$. It then  follows that
\begin{equation}\label{6.9}
 F_i(x_i)\ge\frac{1-\delta}{\mathcal{V}(B_{z_i}^{s_i-r_i})}
 \sum\limits_{y\in B_{z_i}^{s_i-2r_i}}\phi(y).
    \end{equation}
Make use of \eqref{6.4},
  \bess
\sum_{y\in B_{z_i}^{s_i}}\phi(y)\ge\beta\!\!\sum_{y\in B_{z_i}^{s_i}\cap\{y:\,\phi(y)\ge\beta\}}\!\!\!\!1
\ge\beta\mathcal{V}\bigl(B_{z_i}^{s_i}\cap\{x:\,\phi(x)\ge\beta\}\bigr)
\ge\beta\bigl(\bar{D}(\beta)-
\varepsilon\bigr)\mathcal{V}(B_{z_i}^{s_i}),
\eess
and hence
 \bess
\sum_{y\in B_{z_i}^{s_i-2r_i}}\phi(y)&=&\sum_{y\in B_{z_i}^{s_i}}\phi(y)-
\sum_{y\in B_{z_i}^{s_i}\setminus B_{z_i}^{s_i-2r_i}}\phi(y)\nm\\
&\ge&\beta\bigl(\bar{D}(\beta)-\varepsilon\bigr)\mathcal{V}
\bigl(B_{z_i}^{s_i}\bigl)-\|\phi\|_{\ell^{\infty}(V)}\mathcal{V}\bigl(B_{z_i}^{s_i}\setminus B_{z_i}^{s_i-2r_i}\bigl).
 \eess
Make use of this and \eqref{6.9}, it follows that
 \bess
F_i(x_i)\ge\frac{1-\delta}{\mathcal{V}(B_{z_i}^{s_i-r_i})}
\bigl[\beta\bigl(\bar{D}(\beta)-\varepsilon\bigr)
\mathcal{V}\left(B_{z_i}^{s_i}\right)
-\|\phi\|_{\ell^{\infty}(V)}\mathcal{V}\bigl(B_{z_i}^{s_i}\setminus B_{z_i}^{s_i-2r_i}\bigl)\bigl].
\eess
The inequality \eqref{6.5} is obtained.

{\it Step 2}. It follows from \eqref{6.5} that, for any $i>K$,
  \bes\label{6.10}
\sup_{z\in V}\sum_{y\in V}P(T_{1},z,y)\phi(y)&\ge&\sup_{x\in B_{z_i}^{s_i-r_i}}\sum_{y \in B_{z_i}^{s_i}}\chi_{B_x^{r_i}(y)}P(T_{1},x,y)\phi(y),\nonumber\\
&\ge&\frac{1-\delta}{\mathcal{V}(B_{z_i}^{s_i-r_i})}\big[\beta\bigl(\bar{D}(\beta)
-\varepsilon\bigr)\mathcal{V}(B_{z_i}^{s_i})-
\|\phi\|_{\ell^{\infty}(V)}\mathcal{V}\bigl(B_{z_i}^{s_i}\setminus B_{z_i}^{s_i-2r_i}\bigl)\big].\qquad
 \ees
Making use of \eqref{6.2} we have
 \bes
\mathcal{V}(B_{z_i}^{s_i}\backslash B_{z_i}^{s_i-2r_i})& =&\mathcal{V}(B_{z_i}^{s_i})-\mathcal{V}(B_{z_i}^{s_i-2r_i})\nm\\
&=&A s_i^m+O(s_i^{m-1})-\big(A(s_i-2 r_i)^m+O(s_i-2r_i)^{m-1}\big)\nm\\
&=:&M_i.\label{6.11}
 \ees
It follows from \eqref{6.2}, \eqref{6.10} and \eqref{6.11} that
 \bess
\sup_{z\in V}\sum_{y\in V}P(T_{1},z,y)\phi(y)\ge(1-\delta) \frac{\left[A s_i^m+O\left(s_i^{m-1}\right)\right]\beta(\bar{D}(\beta)-\varepsilon)
-M_i\|\phi\|_{l^{\infty}(V)}}{A\left(s_i-r_i\right)^m+O\left(s_i-r_i\right)}.
 \eess
Take $i\to\infty$ to get $\sup_{z\in V}\sum_{y\in V}P(T_{1},z,y)\phi(y)
\ge(1-\delta)\beta\bigl(\bar{D}(\beta)-\varepsilon\bigr)$. By the arbitrariness of $\varepsilon$ and $\delta$,
\begin{equation}\label{6.12}
\sup_{z\in V}\sum_{y\in V}P(T_{1},z,y)\phi(y)
    \ge \beta\bar{D}(\beta).
\end{equation}

By similar arguments as in the proof of Lemma \ref{l4.6}, it can be derived that
 \begin{equation}\label{6.13}
T_1\le \frac{1}{p-1}\Bigg(\sup_{z\in V}\sum_{y\in V}P(t,z,y)\phi(y)\Bigg)^{1-p}.
\end{equation}
The desired result \eqref{6.3} can be deduced by \eqref{6.12} and  \eqref{6.13}.
\end{proof}

\begin{lemma}\label{c6.1}
    If there exists $\beta>0$ so that $\bar{D}(\beta)>0$, then
\bes\label{6.14}
    T_1\left(\mathbb{Z}^{N},\Delta_{\mathbb{Z}^{N}}\right)\le \frac{1}{p-1}(\beta \bar{D}(\beta))^{1-p},
\ees
    where $T_1\left(\mathbb{Z}^{N},\Delta_{\mathbb{Z}^{N}}\right)$ is the life span of the solution to the problem \eqref{1.10} with $\lambda=1$.
\end{lemma}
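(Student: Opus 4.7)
\textbf{Proof plan for Lemma \ref{c6.1}.} The strategy is to verify that $(\mathbb{Z}^N,\Delta_{\mathbb{Z}^N})$ satisfies the hypotheses of Lemma \ref{l6.1} and then invoke that lemma directly. The structural conditions are immediate: $\mathbb{Z}^N$ is an infinite, connected, locally finite graph in which each vertex has degree $2N$, and from the definition of $\Delta_{\mathbb{Z}^N}$ in the introduction we take $\mu\equiv 1$ together with $\omega_{xy}=1/N$ whenever $|x-y|_{\ell^1}=1$, giving $D_\mu=2<\infty$.

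For the volume-growth condition \eqref{6.2}, a routine lattice count shows that the number of points $y\in\mathbb{Z}^N$ with $|y-x|_{\ell^1}\le r$ is a polynomial in $r$ of degree exactly $N$ with positive leading coefficient $A=2^N/N!$, independent of $x$; hence $\mathcal{V}(x,r)=Ar^N+O(r^{N-1})$ uniformly in $x\in\mathbb{Z}^N$, so the hypothesis \eqref{6.2} holds with $m=N$.

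The one delicate hypothesis is the distance-only dependence \eqref{6.1} of the heat kernel. When $N=1$ this holds outright: $\mathbb{Z}$ is invariant under both translation $x\mapsto x+c$ and reflection $x\mapsto -x$, which together force $P(t,x,y)$ to depend only on $|x-y|$. For $N\ge 2$ the kernel is not radial in the $\ell^1$ metric in general (for instance $P(t,0,(2,0))\ne P(t,0,(1,1))$); however, inspection of the proof of Lemma \ref{l6.1} reveals that \eqref{6.1} is invoked only to derive the identity \eqref{6.7}, and \eqref{6.7} already follows from translation invariance alone, via the change of variables $x\mapsto x-y+z$, which bijects $B_y^{r_i}$ onto $B_z^{r_i}$ and preserves the values of $P(T_1,\cdot,\cdot)$. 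With this mild strengthening of Lemma \ref{l6.1} to any translation-invariant graph of polynomial volume growth, the bound \eqref{6.14} drops out with $m=N$. The main obstacle is precisely this last verification; everything else is a routine check of the hypotheses of Lemma \ref{l6.1}.
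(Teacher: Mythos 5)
Your overall route is the same as the paper's: verify the hypotheses of Lemma \ref{l6.1} for $(\mathbb{Z}^N,\Delta_{\mathbb{Z}^N})$ and invoke it, and your routine checks ($\mu\equiv 1$, $\omega\equiv 1/N$, $D_\mu=2$, and the lattice-point count giving \eqref{6.2} with $m=N$ and $A=2^N/N!$) match what the paper does. Where you genuinely differ is on hypothesis \eqref{6.1}: the paper disposes of it in one line by citing \cite[Theorem 12]{CY2} to assert that the heat kernel on $\mathbb{Z}^d$ depends only on the graph distance, whereas you point out that this is literally false for $N\ge 2$. Your counterexample is correct: the semigroup factors over coordinates, so $P(t,0,(2,0))=e^{-2t}I_0(t)I_2(t)$ while $P(t,0,(1,1))=e^{-2t}I_1(t)^2$ (modified Bessel functions), and these differ by the strict Tur\'an inequality $I_1^2>I_0I_2$, even though both points are at $\ell^1$-distance $2$ from the origin. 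Your repair is also the right one: in the proof of Lemma \ref{l6.1} the hypothesis \eqref{6.1} is used only to obtain \eqref{6.7}, i.e.\ that $\sum_{x\in B_y^{r_i}}P(T_1,x,y)$ is independent of the center $y$, and this already follows from translation invariance of $P$ together with $P(t,x,y)=P(t,y,x)$ via the bijection $x\mapsto x-y+z$ of $B_y^{r_i}$ onto $B_z^{r_i}$. So your argument is sound and in fact patches a gap in the paper's own justification for $N\ge2$; note that the application the paper actually needs, Theorem \ref{t1.6}, concerns only $N=1$, where \eqref{6.1} does hold outright by translation and reflection invariance, so the main result is unaffected under either reading.
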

\begin{proof}
It is well known that for any $r\in\mathbb{Z}^{+}$,
  $$\mathcal{V}(x,r)=\sum_{x\in \mathbb{Z}^d,\; d(x,0)\leq r}1=A(d)r^d+O\left(r^{d-1}\right).$$
According to \cite[Theorem 12]{CY2}, the heat kernel $P$ for $\left(\mathbb{Z}^d, \Delta_{\mathbb{Z}^d}\right)$ satisfies \eqref{6.1}. Thus, by Lemma \ref{l6.1},  \eqref{6.14} holds.
\end{proof}

\begin{proof}[Proof of Theorem \ref{t1.6}]
Without loss of generality, we suppose that
 \begin{equation*}
A=\liminf_{\mathbb{Z}\ni x\to +\infty}\phi(x)\ge\liminf_{\mathbb{Z}\ni x\to -\infty}\phi(x).
 \end{equation*}
Then for any $\varepsilon>0$ we can find $R\in\mathbb{Z}^{+}$ so that
$\phi(x)\ge A-\varepsilon$ for $x\ge R$.
Hence, it is easy to check that $\bar{D}(A-\varepsilon)=1$. According to Lemma \ref{c6.1},  $T_1\left(\mathbb{Z},\Delta_{\mathbb{Z}}\right)\le\frac{1}{p-1}(A-\varepsilon)^{1-p}$.
Taking $\varepsilon\to 0$ we get \eqref{1.11}.
\end{proof}


\vspace{50pt}

\end{document}